\documentclass[12pt, reqno]{amsart}
\usepackage{hyperref}

\usepackage{amssymb,amsmath,graphicx,amsthm}
\usepackage{enumerate}


\setlength{\textwidth}{16truecm}
\oddsidemargin=-0.1truecm
\evensidemargin=-0.1truecm
\setlength{\textheight}{20cm}

\def\T{\text}
\newcommand{\Om}{\Omega}

\newcommand{\no}[1]{\|{#1}\|}

\def\C{{\mathbb{C}}}



\numberwithin{equation}{section}

\def\T{\text}

\frenchspacing

\theoremstyle{plain}

\newtheorem{theorem}{Theorem}[section]

\newtheorem{corollary}[theorem]{Corollary}

\newtheorem{lemma}[theorem]{Lemma}

\newtheorem{proposition}[theorem]{Proposition}

\theoremstyle{definition}

\newtheorem{definition}[theorem]{Definition}

\theoremstyle{remark}

\newtheorem{remark}[theorem]{Remark}

\DeclareMathOperator*{\esssup}{ess\,sup}


%

\def\B{\mathcal B}

\def\BB{\mathbb B}

\usepackage{amsmath}
\usepackage{amsthm}
\usepackage{amsfonts}
\usepackage{graphics}

\usepackage{amssymb}
\usepackage{verbatim}
\usepackage{amscd}
\usepackage{enumerate}
\usepackage{graphicx}
\numberwithin{equation}{section}


\begin{document}

\title[Bergman-Toeplitz operators]{Bergman-Toeplitz operators between weighted $L^p$-spaces on weakly pseudoconvex domains}

\subjclass[2010]{Primary {47B35}, 32T25; Secondary 32A25, 32A36.}

\keywords{Bergman kernel, Bergman-Toeplitz operator, pseudoconvex domain of finite type, essential norm, Schatten class}

\date{\today}

\author[Tran Vu Khanh, Pham Trong Tien]{Tran Vu Khanh and Pham Trong Tien}

\address{Tran Vu Khanh}
\address{School of Engineering, Tan Tao University, Duc Hoa District, Long An Province, Vietnam}
\email{khanh.tran@ttu.edu.vn}
\address{}
\address{Pham Trong Tien}
\address{Department of Mathematics, Mechanics and Informatics, VNU University of Science, Vietnam National University, Hanoi}%
\address{Thang Long Institute of Mathematics and Applied Sciences, Nghiem Xuan Yem, Hoang Mai, Hanoi, Vietnam}
\email{phamtien@vnu.edu.vn}

\begin{abstract}
In this paper we study the Bergman-Toeplitz operator $T_{\psi}$ induced by $\psi(w) = K_{\Om}^{-\alpha}(w,w)d_{\Om}^{\beta}(w)$ with $\alpha, \beta \geq 0$ acting from a weighted $L^p$-space $L_a^p(\Om)$ to another one $L_a^q(\Om)$ on a large class of pseudoconvex domains of finite type. In the case $1 < p \leq q < \infty$,  the following results are established:
\begin{itemize}
\item[(i)] Necessary and sufficient conditions for boundedness, which generalize the recent results obtained by Khanh, Liu and Thuc.
\item[(ii)] Upper and lower estimates for essential norm, in particular, a criterion for compactness.
\item[(iii)] A characterization of Schatten class membership of this operator on Hilbert space $L^2(\Om)$. 
\end{itemize}
\end{abstract}

\maketitle         
\section{Introduction} \label{S1}
 Let $\Omega$ be a bounded domain in $\mathbb C^n$ with the boundary $\partial\Om$, $d_{\Om}(z)$ the distance from $z$ to $\partial\Om$, and $K_{\Om}$ the Bergman kernel associated to $\Om$. 
For a function $\psi\in L^\infty(\Om)$, the Bergman-Toeplitz operator with symbol $\psi$ is defined by 
$$
T_\psi f (z) := \int_\Om K_{\Om}(z,w)\psi(w)f(w) dV(w),
$$
where $dV(w)$ is the Lebesgue measure on $\Om$. Operators of such type have been intensively studied on (weighted) $L^p$-spaces and Bergman spaces over the unit ball $\mathbb B_n$ in different directions, such as boundedness \cite{TV10}, compactness \cite{S98}, essential norm \cite{MSW13, S07}, and Schatten class membership \cite{APP15, P14, Z07}. It should be noted that the Bergman kernel $K_{\mathbb B_n}$ associated to the unit ball $\mathbb B_n$ has an explicit formula. The case of the absence of an explicit formula for the Bergman kernel $K_{\Om}$ is more complicated and has attracted attention of many researchers.

Recall that when $\psi$ is identically $1$, $T_{\psi}$ reduces to the Bergman projection $P$ that maps from $L^p(\Om)$ to itself with $1 < p < \infty$ on some classes of pseudoconvex domains of finite type such as strongly pseudoconvex domains \cite{PhSt77}, convex domains of finite type \cite{McNSt94}, pseudoconvex domains of finite type in $\C^2$ \cite{NaRoStWa89}.
In order to improve the regularity of the operator $T_{\psi}$ in $L^p$-spaces, \u{C}u\u{c}kovi\'c and McNeal \cite{CuMc06} studied the operator $T_{\psi}$ induced by $\psi(w) = d_{\Om}(w)^{\eta}$ with $\eta > 0$ on strongly pseudoconvex domains. In this case, based on the precise information on the Bergman kernel established by Fefferman \cite{Fef74} on domains of such type, the authors proved that:
\begin{itemize}
\item[(1)] For $0 \leq \eta < n+1$ and $1 < p < \infty$, if $\frac{n+1}{n+1 - \eta} < \frac{p}{p-1}$, then $T_{d_{\Om}^{\eta}}: L^p(\Om) \to L^q(\Om)$ is continuous, where $\frac{1}{q} = \frac{1}{p} - \frac{\eta}{n+1}$; otherwise $T_{d_{\Om}^{\eta}}: L^p(\Om) \to L^q(\Om)$ is continuous for all $p \leq q < \infty$.
\item[(2)] For $\eta \geq n + 1$, $T_{d_{\Om}^{\eta}}: L^1(\Om) \to L^{\infty}(\Om)$ is continuous.
\end{itemize}

Later, Abate, Raissy and Saracco \cite{ARS12} showed that the gain in the exponents in this result is optimal by using geometric characterization of Carleson measures in term of the intrinsic Kobayashi geometry of the domain.
Recently, by choosing $\psi(w) = K^{-\alpha}_{\Om}(w,w)$ with $\alpha \geq 0$, Khanh, Liu and Thuc \cite{KLT18} extended this result to a large class of pseudoconvex domains of finite type whose Bergman kernels have good estimates, called \textit{sharp $\mathcal B$-type} (see, Definition \ref{df-bsharp} below). Moreover, the authors also gave an upper-bound for the norm $\|T_{K^{-\alpha}_{\Om}}\|_{L^p(\Om) \to L^q(\Om)}$. It is worth mentioning that this upper-bound generalized the one for the norm of Bergman projection $B$ on the unit ball in $\C^n$ \cite{Zhu06} and on strongly pseudoconvex domains \cite{Cuc17}.
   
Motivated by some ideas in \cite{KLT18}, in this paper we are interested in the Toeplitz operator $T_{\psi}$ induced by $\psi(w) = K_{\Om}(w,w)^{-\alpha}d_{\Om}(w)^{\beta}$ with $\alpha, \beta \geq 0$,  denoted by $T_{\alpha, \beta}$ for simplicity, acting from a weighted $L^p$-space $L_a^p(\Om)$ to another one $L_a^q(\Om)$. Recall that for $0 < p < \infty$ and $a > -1$, the weighted space $L^p_{a}(\Om)$ consists of all measurable functions $f$ on $\Om$ for which
$$
\|f\|_{p,a} := \left( \int_{\Om} |f(z)|^p dV_{a}(z) \right)^{\frac{1}{p}} < \infty,
$$
where $dV_{a}(z) := d_{\Om}(z)^a dV(z)$.
In the case $a = 0$, we use the symbols $L^p(\Om)$ and $\|\cdot\|_{p}$ instead of $L^p_0(\Om)$ and  $\|\cdot\|_{p,0}$, respectively.

The aim of this paper is not only to study the gain $L^p$-estimate property of the Toeplitz operator $T_{\alpha, \beta}: L^p_a(\Om) \to L^q_a(\Om)$ with $1 < p \leq q < \infty$ as in \cite{KLT18}, but also to investigate its essential norm and membership in the Schatten class.
In details, we firstly generalize all results in \cite{KLT18} for the operator $T_{\alpha, \beta}: L^p_a(\Om) \to L^q_a(\Om)$ with $1 < p \leq q < \infty$ in Section \ref{sec-bd}. Next, Section \ref{sec-ess-nor} is devoted to upper and lower estimates for its essential norm, in particular, a criterion for compactness of this operator. Moreover, a characterization of Schatten class membership of $T_{\alpha, \beta}$ on Hilbert space $L^2(\Om)$ is established in Section \ref{sec-sch}.

Our main results are stated in term of the following quantity. For $1 < p \leq q < \infty$, $a > -1$, and a measurable function $\psi: \Om \to \C$, we put
$$
M_{\Om, p, q, a}^{\psi}(w): = |\psi(w)| K_{\Om}(w,w)^{\frac{1}{p} - \frac{1}{q}}d_{\Om}(w)^{a\left(\frac{1}{q} - \frac{1}{p}\right)}, w \in \Om,
$$
and, as usual, 
$$
\|M_{\Om, p, q, a}^{\psi}\|_{\infty}: = \esssup_{w \in \Om} M_{\Om, p, q, a}^{\psi}(w).
$$
In particular, for the special function $\psi(w): = K_{\Om}(w,w)^{-\alpha}d_{\Om}(w)^{\beta}$ with $\alpha, \beta \geq 0$, we use the symbols $M_{\Om, p, q, a}^{\alpha, \beta}(w)$ and $\|M_{\Om, p, q, a}^{\alpha, \beta}\|_{\infty}$ instead of $M_{\Om, p, q, a}^{\psi}(w)$ and $\|M_{\Om, p, q, a}^{\psi}\|_{\infty}$, respectively. It is easy to see that $M_{\Om, p, q, a}^{\alpha, \beta}(w)$ is continuous on $\Om$, and hence $\|M_{\Om, p, q, a}^{\alpha, \beta}\|_{\infty} = \sup_{w \in \Om} M_{\Om, p, q, a}^{\alpha, \beta}(w)$.

\begin{theorem}\label{thm-main}
Let $1 < p \leq q < \infty$, $-1 < a < \min\{2(p-1), \frac{q}{p'} \}$, $p'$ be the conjugate of $p$, i.e., $\frac1p+\frac1p'=1$,  and $\Om$ be a bounded, pseudoconvex domain in $\C^n$ with smooth boundary. Furthermore, $\Om$ is one of the following domains:
\begin{itemize}
\item[(a)] a strongly pseudoconvex domain;
\item[(b)] a pseudoconvex domain of finite type in $\C^2$;
\item[(c)] a convex domain of finite type;
\item[(d)] a decouple domain of finite type;
\item[(e)] a pseudoconvex domain of finite type whose Levi-form has only one degenerate eigenvalue or comparable eigenvalues.
\end{itemize}
For every $\alpha, \beta \geq 0$, the following statements hold:
\begin{itemize}
\item[(1)] The operator $T_{\alpha, \beta}: L_a^p(\Om) \to L_a^q(\Om)$ is continuous if and only if $M_{\Om, p, q, a}^{\alpha, \beta}(w) \in L^{\infty}(\Om)$.
In this case, 
$$
\|T_{\alpha, \beta}\|_{L_a^p(\Om) \to L_a^q(\Om)} \leq C \left( \dfrac{p'+q}{(1+a)\left(1 - \frac{ap'}{q} \right) }\right)^{\frac{1}{p'} + \frac{1}{q}} \|M_{\Om, p, q, a}^{\alpha, \beta}\|_{\infty},
$$
where $C$ is independent of $p, q, a$.

\item[(2)] If $M_{\Om, p, q, a}^{\alpha, \beta}(w) \in L^{\infty}(\Om)$, then the essential norm of $T_{\alpha, \beta}: L_a^p(\Om) \to L_a^q(\Om)$ satisfies the following estimate
\begin{equation}\label{eq-ess-nor}
\|T_{\alpha, \beta}\|_{e, L_a^p(\Om) \to L_a^q(\Om)} \approx \limsup_{w \to \partial \Om} M_{\Om, p, q, a}^{\alpha, \beta}(w).
\end{equation}
In particular, the operator $T_{\alpha, \beta}: L_a^p(\Om) \to L_a^q(\Om)$ is compact if and only if 
\begin{equation*}
\limsup_{w \to \partial \Om} M_{\Om, p, q, a}^{\alpha, \beta}(w) = 0.
\end{equation*}
\item[(3)] Suppose that the operator $T_{\alpha, \beta}$ is compact on $L^2(\Om)$. For every $s \geq 1$, the operator $T_{\alpha, \beta}$ belongs to Schatten class $S_s$ if and only if $K_{\Om}(w,w)^{1 - s\alpha}d_{\Om}(w)^{s\beta} \in L^1(\Om)$. In the case $s \in (0,1)$, if $ 2\alpha + \beta < 2$ and $K_{\Om}(w,w)^{1 - s\alpha}d_{\Om}(w)^{s\beta} \in L^1(\Om)$, then $T_{\psi} \in S_s$.
\end{itemize}
\end{theorem}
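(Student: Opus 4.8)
The plan is to reduce all three parts to the pointwise and integral estimates for $K_\Om(z,w)$ available on sharp $\mathcal B$-type domains, and then to note that each of the domains (a)--(e) is of this type, so that the uniform kernel bounds apply in every case. For part (1), I would prove sufficiency by a generalized Schur test applied to the positive integral operator with kernel $|K_\Om(z,w)|\psi(w)$ on the weighted measure $dV_a$. The natural test function is a real power $h(w)=d_\Om(w)^{-t}$ (equivalently a power of $K_\Om(w,w)$), and the two Schur inequalities reduce to integral estimates of the form $\int_\Om |K_\Om(z,w)|\, d_\Om(w)^{b}\,dV(w)\les K_\Om(z,z)^{c}d_\Om(z)^{b'}$, which are exactly what the sharp $\mathcal B$-type hypothesis supplies. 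Tracking the constant in these integral estimates — which blows up like the reciprocal of the distance of the exponent to its critical value — and then optimizing over $t$ produces the explicit factor $\left(\frac{p'+q}{(1+a)(1-ap'/q)}\right)^{1/p'+1/q}$; the constraint $-1<a<\min\{2(p-1),q/p'\}$ is precisely what keeps all the exponents in the admissible range. For the converse I would test $T_{\alpha,\beta}$ against the normalized reproducing kernels $k_w=K_\Om(\cdot,w)/\|K_\Om(\cdot,w)\|_{p,a}$: the reproducing property together with the concentration of $K_\Om(\cdot,w)$ near $w$ gives $\|T_{\alpha,\beta}k_w\|_{q,a}\ges M_{\Om,p,q,a}^{\alpha,\beta}(w)$, forcing $\|M_{\Om,p,q,a}^{\alpha,\beta}\|_\infty\les\|T_{\alpha,\beta}\|$.

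For part (2), I would obtain the upper bound by the decomposition $\psi=\psi\chi_{\{d_\Om>\delta\}}+\psi\chi_{\{d_\Om\le\delta\}}$. The first term gives a Toeplitz operator with symbol supported away from $\partial\Om$, which is compact from $L_a^p(\Om)$ to $L_a^q(\Om)$ (its kernel is bounded on the relevant region and it is a norm limit of finite-rank operators), so it does not contribute to the essential norm; the second term is estimated by rerunning the Schur test of part (1) only over $\{d_\Om\le\delta\}$, which bounds its norm by $\sup_{d_\Om(w)\le\delta}M_{\Om,p,q,a}^{\alpha,\beta}(w)$, and letting $\delta\to0$ yields $\|T_{\alpha,\beta}\|_e\les\limsup_{w\to\partial\Om}M_{\Om,p,q,a}^{\alpha,\beta}(w)$. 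For the matching lower bound I would again use the kernels $k_w$: since $1<p<\infty$ makes $L_a^p(\Om)$ reflexive and $k_w\to0$ weakly as $w\to\partial\Om$, every compact $R$ satisfies $\|Rk_w\|_{q,a}\to0$, so $\|T_{\alpha,\beta}-R\|\ges\limsup_{w\to\partial\Om}\|T_{\alpha,\beta}k_w\|_{q,a}\ges\limsup_{w\to\partial\Om}M_{\Om,p,q,a}^{\alpha,\beta}(w)$, reusing the lower estimate from part (1). Taking the infimum over $R$ gives the reverse inequality, hence \eqref{eq-ess-nor}; the compactness criterion is the special case $\|T_{\alpha,\beta}\|_e=0$.

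For part (3), I would write $T_{\alpha,\beta}=T_\mu$ with the positive measure $d\mu=\psi\,dV$ and use a trace-ideal criterion of Luecking type adapted to finite-type domains. The crucial input is that $\psi=K_\Om^{-\alpha}d_\Om^{\beta}$ is slowly varying on Kobayashi balls, again a consequence of the sharp $\mathcal B$-type estimates, which gives the comparability of the Berezin transform $\widetilde\psi$ with $\psi$ and of $\mu(B(w,r))$ with $\psi(w)/K_\Om(w,w)$. For $s\ge1$ the characterization $T_\mu\in S_s\iff\widetilde\psi\in L^s(d\lambda)$ with $d\lambda=K_\Om(w,w)\,dV$ then translates into $\int_\Om\psi(w)^sK_\Om(w,w)\,dV(w)<\infty$, which is exactly $\int_\Om K_\Om(w,w)^{1-s\alpha}d_\Om(w)^{s\beta}\,dV(w)<\infty$; the case $s=1$ is the direct trace computation $\Tr T_\mu=\int_\Om\psi\,K_\Om(w,w)\,dV$. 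For $0<s<1$ only a sufficient direction is available: discretizing over a Kobayashi lattice $\{w_k\}$ and estimating $T_\mu$ by a sum of rank-one operators reduces matters to $\sum_k(\mu(B(w_k,r)))^s<\infty$, and summability of this $\ell^s$ quasi-norm is what the integrability hypothesis delivers, while the extra assumption $2\alpha+\beta<2$ guarantees the comparabilities used in the discretization remain valid in this range.

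The main obstacle I expect is the sub-unit Schatten range $0<s<1$: here the clean operator-theoretic equivalences of the $s\ge1$ theory fail, the triangle inequality for the $S_s$ quasi-norm is only an $s$-power inequality, and one must control the off-diagonal interactions between the rank-one pieces. The role of the hypothesis $2\alpha+\beta<2$ — which has no counterpart for $s\ge1$ — is precisely to tame these interactions, and pinning down where it is genuinely needed (as opposed to being an artifact of the method) is the delicate point; a secondary difficulty is making the weak-null property of $k_w$ and the compactness of the truncated operator rigorous uniformly across the five geometrically different families (a)--(e), which the sharp $\mathcal B$-type framework is designed to handle.
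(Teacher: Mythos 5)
Your treatment of parts (1) and (2) follows the paper's own route essentially step for step: the weighted Schur test with test functions that are powers of $d_{\Om}$, optimized at $\gamma_0=\frac{1+a}{p'+q}$ to produce the explicit constant (Proposition \ref{prop:U} and Theorem \ref{thm:main1}); the necessity and the lower essential-norm bound via two-sided estimates of $\int_{\Om}|K_{\Om}(w,z)|^2K_{\Om}(w,w)^{-\alpha}d_{\Om}(w)^{\beta}\,dV(w)$, the $\mathcal B$-polydisc localization, and the weak-null property of $k_{\Om,a}(\cdot,z)$ in the reflexive space $L^p_a(\Om)$ (Theorems \ref{thm:main2} and \ref{thm:main2:com}, Lemma \ref{lem-wc-k}); and the upper essential-norm bound by truncating the symbol to a compact exhaustion and rerunning the Schur test on the boundary piece (Proposition \ref{prop-com1}, Theorem \ref{thm:main1:com}). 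No issues there.

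Part (3) is where you diverge. You invoke a Luecking-type Carleson-measure characterization and a Kobayashi-lattice discretization into rank-one pieces. That is the standard machinery on the unit ball, but none of it is established on the domains (a)--(e), and building it here (a covering by $\mathcal B$-polydiscs with bounded overlap, comparability of the measure on such polydiscs, control of the off-diagonal interactions for $0<s<1$) would be a substantial project in itself; as written this is a genuine gap. The paper's argument is deliberately more elementary and self-contained: for $s\ge1$ sufficiency it computes $\sum_m\langle T_{\alpha,\beta}e_m,e_m\rangle^s\le\int_{\Om}K_{\Om}(w,w)\psi(w)^s\,dV(w)$ directly from Bessel's inequality $\sum_m|e_m(w)|^2\le K_{\Om}(w,w)$ (Lemma \ref{lem-sc-sch}); for necessity and for $0<s<1$ it uses only the Berezin transform together with the operator inequalities $\langle T^sk,k\rangle\ge\langle Tk,k\rangle^s$ for $s\ge1$ and the reverse for $s\le1$ (Lemma \ref{lem-BT}), reducing everything to two-sided pointwise bounds on $\widetilde{T_{\alpha,\beta}}(z)$. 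Relatedly, you misplace the role of $2\alpha+\beta<2$: in the paper it has nothing to do with off-diagonal interactions in a lattice, but is exactly the exponent condition under which the \emph{global} upper bound $\int_{\Om}|K_{\Om}(w,z)|^2K_{\Om}(w,w)^{-\alpha}d_{\Om}(w)^{\beta}\,dV(w)\les K_{\Om}(z,z)^{1-\alpha}d_{\Om}(z)^{\beta}$ holds without assuming boundedness of $T_{\alpha,\beta}$ (Proposition A and Lemma A in the Appendix); this yields $\widetilde{T_{\alpha,\beta}}(z)\les K_{\Om}(z,z)^{-\alpha}d_{\Om}(z)^{\beta}$ and hence, via Lemma \ref{lem-BT}(b), membership in $S_s$ for $0<s<1$.
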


Parts (1) and (2) in Theorem \ref{thm-main} are immediate consequences of the results obtained in Sections \ref{sec-bd} and \ref{sec-ess-nor}, respectively. In details, sufficient conditions for boundedness of the operator $T_{\psi}: L_a^p(\Om) \to L_a^q(\Om)$ and upper estimate for the essential norm $\|T_{\psi}\|_{e, L_a^p(\Om) \to L_a^q(\Om)}$ are established in Theorems \ref{thm:main1} and, respectively, \ref{thm:main1:com} in general case when $\psi$ is a measurable function on $\Om$ and $\Om$ is a bounded pseudoconvex domain whose the Bergman kernel is of sharp $\mathcal B$-type. The necessary ones and lower estimate for $\|T_{\alpha, \beta}\|_{e, L_a^p(\Om) \to L_a^q(\Om)}$ are proved in Theorems \ref{thm:main2} and, respectively, \ref{thm:main2:com} under an additional geometric \textit{$\mathcal B$-polydisc condition} of the domain $\Om$ (see, Definition \ref{df-BP} below). The proof of part (3) in Theorem \ref{thm-main} is stated in Theorem \ref{thm:main:sch}.

Furthermore, in Section \ref{sec-Ber} we also show that all results in Theorem \ref{thm-main} except the estimate \eqref{eq-ess-nor} hold for the operator $T_{\alpha, \beta}$ acting from a weighted Bergman space $A_{a}^p(\Om)$ to another one $A_{a}^q(\Om)$ with $1 < p \leq q < \infty$. 

 {\bf Notations:} Throughout this paper, for every number $p > 1$, we denote by $p'$ its conjugate index, i. e. $\frac{1}{p} + \frac{1}{p'} = 1$.
 We also use the notation $A \lesssim B$ for nonnegative quantities $A$ and $B$ to mean that there is an inessential constant $C> 0$ such that $A \leq C B$; similarly the notation $A \approx B$ means that both $A \lesssim B$ and $B \lesssim A$ hold, where the constant $C$ may change from place to place. 
  The terminology ``\textit{universal positive constant}" means that this constant depends only on the domain $\Om$ (e.g. $n$ and the type of $\Om$).
 
\section{Preliminaries}
In this section we recall the notation of sharp $\mathcal B$-type and an additional geometric hypothesis of $\Om$, under which our main results are established. 

\begin{definition}\label{df-bsharp}
According to \cite[Definitions~2.1 and 2.2]{KLT18}, the Bergman kernel $K_\Om$ associated to a domain $\Om$ is called \textit{of sharp $\B$-type}, if the following conditions hold:
	\begin{enumerate}
		\item[(i)] $K_{\Om}$ is continuous up to the off-diagonal boundary, i.e. $K_\Om\in C((\overline\Om\times\overline\Om) \setminus (\partial \Om \times \partial \Om) )$;
		\item[(ii)] there are universal constants $c$ and $C$ dependent only on $\Om$ such that for every $z\in \overline\Om$ near the boundary $\partial \Om$, we can find a biholomorphism $\Phi_z$ whose holomorphic Jacobian is uniformly nonsingular in the sense that  
		$C^{-1}\le |\det J_\C \Phi_z(w)|\le C$ for all $w$ in a neighborhood of $z$, 
		so that the Bergman kernel $K_{\Om'}$ associated to the domain $\Om':=\Phi_z(\Om)$ is \textit{of sharp $\B$-type at $z':=\Phi_z(z)$}, that is,
		$$
		C^{-1}  \prod_{j=1}^n b_j^2(z',z') \leq  K_{\Om'}(z', z') \leq C \prod_{j=1}^n b_j^2(z',z')
		$$
		 and
		$$
		|K_{\Om'}(z', w')| \leq C \prod_{j=1}^n b_j^2(z',w') \text{ for all } w' \in \Om' \cap \mathbb B(z',c).
	  $$
\end{enumerate}
Here, $\B=\{b_j(z',\cdot)\}_{j=1}^n$  is a \emph{$\B$-system} at $z'$, i. e. there exist a neighbourhood $U$ of $z'$ and a positive integer $m\geq2$ such that for all $w'\in U$, 
  	$$
  	b_{1}(z',w')  := \frac{1}{\delta(z',w')} \quad \T{and} \quad 
  	b_{j}(z',w')   :=  \sum_{k=2}^{m}\left(\dfrac{A_{jk}\left(z'\right)}{\delta(z',w')}\right)^{\frac{1}{k}}, \quad \T{for } j=2,{\scriptstyle\ldots},n,
		$$
  	where $\left\{ A_{jk}:U\rightarrow [0, \infty) \right\}$ are bounded functions such that for each $j$ there exists a $k$ so that $A_{jk}>0$ on $U$; and $\delta(z',w')$ is the pseudo-distance between $z'$ and $w'$, given by
  	\begin{eqnarray}
  	\label{eqn:delta0}
  	\delta(z',w') = d_{\Om'}\left(z'\right) + d_{\Om'}\left(w'\right)+\left|z'_{1}-w'_{1}\right|+\sum_{l=2}^{n}\sum_{s=2}^{m}A_{ls}\left(z'\right)\left|z'_{l}-w'_{l}\right|^{s},
  	\end{eqnarray}
  	under a proper system of coordinates, see \cite{McNSt94}.
\end{definition}

\begin{definition}\label{df-BP}
We say that a bounded smooth pseudoconvex domain $\Om$ in $\C^n$ whose Bergman kernel is of sharp $\B$-type satisfies \textit{$\mathcal B$-polydisc condition}, if there are universal constants $\lambda$ and $C$ such that for every $z \in \Om$ near the boundary $\partial\Om$,
$$
P_{\lambda}(z') \subset \Om':=\Phi_z(\Om), K_{\Om'}(w',w')\le C K_{\Om'}(z',z'), \text{ and } C^{-1} d_{\Om'}(z')  \leq d_{\Om'}(w') \leq C d_{\Om'}(z'),
$$
for all $w'\in P_{\lambda}(z')$, where $\Phi_z$ is the biholomorphism defined in Definition~\ref{df-bsharp}, $z' = \Phi_z(z)$, and 
 	$$
 	P_\lambda(z')=\{w'\in \C^n: |w'_j-z'_j|b_{j}(z',z')\le \lambda, \T{ for all } j= 1,2, \dots, n\}  
 	$$
 	is a $\B$-polydisc with centre $z'$ associated to the $\B$-system in Definition~\ref{df-bsharp}.
\end{definition}

\begin{remark}
In \cite[Theorem~4.1]{KLT18} using the results of Fefferman \cite{Fef74}, Catlin \cite{Cat89}, McNeal \cite{McN91, McN94}, McNeal and Stein \cite{McNSt94}, and Cho \cite{Cho96, Cho02}, Khanh, Liu and Thuc  proved that the Bergman kernels associated to all domains $\Om$ in Theorem \ref{thm-main} are of sharp $\mathcal B$-type and these domains satisfy $\mathcal B$-polydisc condition. Furthermore, by the conditions of $A_{jk}$ in Definition \ref{df-bsharp}.(ii) for the domains in Theorem \ref{thm-main}, the inequality
\begin{equation}\label{eq-ine-dk}
d_{\Om}(z)^{-2} \leq K_{\Om}(z,z)
\end{equation}
holds for all $z \in \Om$.
\end{remark}

Next, we give weighted $L^p$-estimates for the Bergman kernel $K_{\Om}(\cdot, z)$. To do this, we recall the following auxiliary result, which is proved in \cite[Proposition~2.4]{KLT18} and plays an important role in this paper.
 
 \begin{proposition}\label{prop:main} Let $\Om$ be a domain in $\C^n$ such that the Bergman kernel $K_{\Om}$ is of sharp $\mathcal{B}$-type. Then, for each $z_0\in \partial\Om$, there is a neighbourhood $U$ of $z_0$ such that  for any $a \ge 1$ and $-1 < b <2a-2$, 
	\begin{equation}
	\label{eqn:Iab}
	\begin{split}
	I_{a,b}\left(z\right) &:= \intop_{\Omega\cap U}\left|K_{\Om}\left(z,w\right)\right|^{a} d_{\Om}\left(w\right)^{b} dV(w) \\
		&\leq C\frac{2a-1}{(2a-2-b)(b + 1)}K_{\Om}(z,z)^{a-1}d_{\Om}\left(z\right)^{b}
	\end{split}
	\end{equation}
	for every $z\in\Omega\cap U$ and some constant $C$ dependent only on $U$ and $\Om$.
\end{proposition}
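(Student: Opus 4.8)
The plan is to transfer the integral to the model domain supplied by the sharp $\B$-type hypothesis, replace the Bergman kernel by its $\B$-system majorant, and then evaluate the resulting anisotropic integral by a dyadic decomposition in the pseudo-distance $\delta$, keeping careful track of the dependence on $a$ and $b$. First I would reduce to $\Om'=\Phi_z(\Om)$ from Definition~\ref{df-bsharp}. Since the holomorphic Jacobian of $\Phi_z$ is uniformly bounded above and below, the transformation law for the Bergman kernel gives $|K_\Om(z,w)|\approx|K_{\Om'}(z',w')|$, while $d_\Om(w)\approx d_{\Om'}(w')$ and $dV(w)\approx dV(w')$; hence $I_{a,b}(z)\approx\int_{\Om'\cap U'}|K_{\Om'}(z',w')|^a d_{\Om'}(w')^b\,dV(w')$ and the target $K_\Om(z,z)^{a-1}d_\Om(z)^b\approx K_{\Om'}(z',z')^{a-1}d_{\Om'}(z')^b$. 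Shrinking $U$ so that $\operatorname{diam}U<c$ ensures $\Om'\cap U'\subset\mathbb B(z',c)$, so the pointwise majorant $|K_{\Om'}(z',w')|\le C\prod_{j=1}^n b_j(z',w')^2$ holds on the whole domain of integration and no far off-diagonal term survives. Dropping primes, it then suffices to prove
\[
J:=\int_{\Om\cap U}\Big(\prod_{j=1}^n b_j(z,w)^2\Big)^{a} d_\Om(w)^b\,dV(w)\lesssim \frac{2a-1}{(2a-2-b)(b+1)}\Big(\prod_{j=1}^n b_j(z,z)^2\Big)^{a-1}d_\Om(z)^b .
\]

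The key observation is that $\prod_j b_j(z,w)^2$ depends on $w$ only through $\rho:=\delta(z,w)$: writing $f_1(\rho)=1/\rho$ and $f_j(\rho)=\sum_{k=2}^m(A_{jk}(z)/\rho)^{1/k}$, one has $g(\rho):=\prod_j f_j(\rho)^2=\rho^{-2}\prod_{l\ge2}f_l(\rho)^2$, a decreasing, doubling function with $g(2\rho)\le g(\rho)/4$ (each $f_l$ is decreasing and the $b_1$-factor already contributes $\rho^{-2}$). The geometric heart of the argument is the weighted volume estimate for the pseudoballs $\{w:\delta(z,w)<\rho\}$. Using the anisotropic box description of $\delta$ in \eqref{eqn:delta0} in the adapted coordinates of \cite{McNSt94} — in which one real direction of $w_1$ is the normal direction, comparable to $d_\Om(w)$, while the remaining directions have extents $\rho$ and $f_l(\rho)^{-1}$ — one obtains, for $\rho\gtrsim d_\Om(z)$,
\[
V(\rho):=\int_{\{\delta(z,w)<\rho\}}d_\Om(w)^b\,dV(w)\approx\Big(\int_0^\rho\tau^b\,d\tau\Big)\cdot\rho\cdot\prod_{l\ge2}f_l(\rho)^{-2}\approx\frac{\rho^{b}}{(b+1)\,g(\rho)} .
\]
Here the factor $1/(b+1)$ and the constraint $b>-1$ come precisely from integrating the weight $\tau^b$ across the single normal direction, and $V$ inherits the doubling of $g$.

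Finally I would decompose dyadically. With $\rho_k:=2^k d_\Om(z)$ and shells $R_k:=\{2^{k}d_\Om(z)\le\delta(z,w)<2^{k+1}d_\Om(z)\}$, the factor $g(\delta)^a$ is $\approx g(\rho_k)^a$ on $R_k$, while $\int_{R_k}d_\Om(w)^b\,dV\approx V(\rho_k)\approx\rho_k^{b}/((b+1)g(\rho_k))$ by doubling, so that
\[
J\approx\frac1{b+1}\sum_{k\ge0}g(\rho_k)^{a-1}\rho_k^{b}\le\frac{g(d_\Om(z))^{a-1}d_\Om(z)^b}{b+1}\sum_{k\ge0}2^{-k(2a-2-b)} ,
\]
where the term ratio $g(\rho_{k+1})^{a-1}\rho_{k+1}^b\le 2^{-(2a-2-b)}g(\rho_k)^{a-1}\rho_k^b$ uses $g(2\rho)\le g(\rho)/4$ and $a\ge1$. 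The series converges exactly because $b<2a-2$, and the elementary bound $(1-2^{-x})^{-1}\le 2\max(1/x,1)$ with $x=2a-2-b$ gives $\sum_k2^{-k(2a-2-b)}\lesssim (2a-1)/(2a-2-b)$, since $2a-1\ge\max(1,\,2a-2-b)$ (the latter because $b>-1$). Using $g(d_\Om(z))\approx K_\Om(z,z)$ (from $\delta(z,z)\approx d_\Om(z)$ and doubling, consistent with \eqref{eq-ine-dk}) then produces the claimed bound, with $C$ depending only on $\Om$ through $n$ and the implicit constants above.

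I expect the main obstacle to be the weighted volume estimate for $V(\rho)$: correctly isolating the single normal direction responsible for the $1/(b+1)$ factor from the tangential anisotropic box responsible for the $1/g(\rho)$ factor, and establishing the doubling of $V$ uniformly for $z$ near $\partial\Om$. Once this is in hand, the dyadic summation and the tracking of the constant $\tfrac{2a-1}{(2a-2-b)(b+1)}$ are elementary.
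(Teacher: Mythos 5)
Your route is genuinely different from the paper's. The paper does not reprove this statement in the body (it quotes \cite[Proposition~2.4]{KLT18}), but the method is visible in Proposition~A of the Appendix: one uses $b_1(z',w')\le (d_{\Om'}(z')+d_{\Om'}(w'))^{-1}$ and $b_j(z',w')\le b_j(z',z')$ to peel off $a-1$ powers of the kernel via $|K_{\Om'}(z',w')|\le C K_{\Om'}(z',z')\bigl(d_{\Om'}(z')/(d_{\Om'}(z')+d_{\Om'}(w'))\bigr)^{2}$, and then evaluates the remaining single-power integral $\int |K_{\Om'}(z',w')|\,d_{\Om'}(w')^{b}(d_{\Om'}(z')+d_{\Om'}(w'))^{-(2a-2)}dV(w')$ by iterated one-dimensional integrals. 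Your alternative --- observing that the majorant $\prod_j b_j(z',w')^2$ depends on $w'$ only through $\delta(z',w')$, so that the integral becomes a dyadic sum in $\delta$ against weighted volumes of pseudoballs --- is a clean and viable outline, and your bookkeeping of the factor $(2a-1)/(2a-2-b)$ from the geometric series is correct. But there are two concrete gaps. The first is the weighted volume estimate $V(\rho)\approx \rho^{b}/((b+1)g(\rho))$, which you rightly call the heart of the argument but do not establish. Definition~\ref{df-bsharp} supplies only the formula \eqref{eqn:delta0} for $\delta$; it does not provide the box structure of $\Om'\cap\{\delta(z',\cdot)<\rho\}$ in which one real coordinate of $w'_1$ is uniformly comparable to $d_{\Om'}(w')$. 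The term $d_{\Om'}(w')$ enters \eqref{eqn:delta0} only additively, which gives $d_{\Om'}(w')<\rho$ on the pseudoball but not the Fubini factorization $\int_0^{\rho}\tau^{b}\,d\tau$ over a single normal direction. Without that input the factor $1/(b+1)$ is unobtainable; in particular for $b\ge 0$ the crude bound $d_{\Om'}(w')^{b}\le\rho^{b}$ only yields $V(\rho)\lesssim \rho^{b}/g(\rho)$, which proves a statement strictly weaker than \eqref{eqn:Iab}. So this step needs the McNeal--Stein adapted-coordinate geometry as an explicit, proved additional hypothesis, not just a reference to ``the box description.''

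The second gap concerns the constant. The proposition asserts that $C$ depends only on $U$ and $\Om$, uniformly in $a$ and $b$; this uniformity is exactly what is consumed later to get the explicit norm bound in Theorem~\ref{thm:main1}. Your dyadic shells do not preserve it: converting $\int_{R_k}d_{\Om}(w)^{b}dV$ to $V(\rho_{k+1})$ and then re-expressing everything at scale $\rho_k$ costs a factor $2^{b}\cdot g(\rho_k)/g(\rho_{k+1})$ per shell, and any comparison $d_{\Om'}(w')\approx t$ with a constant $C_0>1$ contributes $C_0^{|b|}$ to the volume estimate. Since $b$ may be as large as $2a-2$ with $a$ unbounded, the implied constant in your display ``$J\approx\frac{1}{b+1}\sum_k g(\rho_k)^{a-1}\rho_k^{b}$'' grows without bound in $b$, so the argument as written does not deliver the stated conclusion. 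This part is repairable --- replace the dyadic sum by a continuous layer-cake integration in $\rho$, using that $|g'(\rho)|\lesssim g(\rho)/\rho$ with an absolute constant --- but you should either do that or track these exponentially growing factors explicitly and explain why they are admissible in the intended applications.
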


Using this proposition, we can get the upper estimate for the norm of $K_{\Om}(\cdot,z)$ in $L_a^p(\Om)$.

\begin{lemma}\label{lem-nor-ue}
Let $ 1 \leq p < \infty$, $-1 < a < 2(p - 1)$, and $\Om$ be a bounded domain in $\C^n$ such that the Bergman kernel $K_{\Om}$ is of sharp $\mathcal{B}$-type.
 Then for every $z \in \Om$,
$$
\|K_{\Om}(\cdot,z)\|_{p, a} \leq C K_{\Om}(z,z)^{1 - \frac{1}{p}}d_{\Om}(z)^{\frac{a}{p}},
$$
for some constant $C$ independent of $z$.
\end{lemma}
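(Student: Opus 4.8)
The plan is to reduce the global weighted integral to the local estimate of Proposition~\ref{prop:main} by a compactness argument, treating points $z$ in the interior and those near the boundary separately. Using the Hermitian symmetry $|K_\Om(w,z)| = |K_\Om(z,w)|$ we may write
$$
\|K_\Om(\cdot,z)\|_{p,a}^p = \int_\Om |K_\Om(z,w)|^p d_\Om(w)^a\, dV(w),
$$
so it suffices to bound this integral by $C\, K_\Om(z,z)^{p-1} d_\Om(z)^a$ and take the $p$-th root. Observe that the hypothesis $-1 < a < 2(p-1)$ is exactly the range $-1 < b < 2p-2$ needed to apply \eqref{eqn:Iab} with the pair of exponents $(p,a)$ in place of $(a,b)$.

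For $z$ in a fixed compact set $E := \{w \in \Om : d_\Om(w) \ge \epsilon_0\}$ the estimate is immediate. Indeed, the right-hand side is bounded below by a positive constant, since $d_\Om(z) \ge \epsilon_0$ and $K_\Om(z,z)$ attains a positive minimum on $E$; while the left-hand integral is bounded above uniformly in $z \in E$, because $K_\Om$ is bounded on the compact set $E \times \overline\Om$ (which avoids the boundary diagonal, as $z$ is interior) and $\int_\Om d_\Om(w)^a\, dV(w) < \infty$ since $a > -1$ and $\partial\Om$ is smooth. Thus the whole difficulty is concentrated near $\partial\Om$.

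For $z$ with $d_\Om(z) < \epsilon_0$, cover the compact set $\partial\Om$ by finitely many neighbourhoods $U_1,\dots,U_N$ supplied by Proposition~\ref{prop:main}, and pick a radius $r_0 > 0$ smaller than a Lebesgue number of this cover, so that $\Om \cap \mathbb{B}(z,r_0) \subset \Om \cap U_{j_0}$ for some $j_0$. We split
$$
\int_\Om |K_\Om(z,w)|^p d_\Om(w)^a\, dV(w) = \int_{\Om \cap \mathbb{B}(z,r_0)} |K_\Om(z,w)|^p d_\Om(w)^a\, dV(w) + \int_{\Om \setminus \mathbb{B}(z,r_0)} |K_\Om(z,w)|^p d_\Om(w)^a\, dV(w).
$$
The first integral is dominated by $\int_{\Om \cap U_{j_0}} |K_\Om(z,w)|^p d_\Om(w)^a\, dV(w)$, which by \eqref{eqn:Iab} is at most $C\frac{2p-1}{(2p-2-a)(a+1)} K_\Om(z,z)^{p-1} d_\Om(z)^a$, precisely the desired form.

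The main obstacle is the second, ``far'', integral, over $|z-w| \ge r_0$. On this region the pair $(z,w)$ stays away from the boundary diagonal, so the off-diagonal behaviour of $K_\Om$ for the domains under consideration yields a uniform bound $|K_\Om(z,w)| \le C_1$; hence the far integral is at most $C_1^p \int_\Om d_\Om(w)^a\, dV(w) =: C_2 < \infty$, a constant independent of $z$. It remains to absorb $C_2$ into the right-hand side, and this is exactly where the upper bound on $a$ is used: by \eqref{eq-ine-dk} we have $K_\Om(z,z) \ge d_\Om(z)^{-2}$, so that
$$
K_\Om(z,z)^{p-1} d_\Om(z)^a \ge d_\Om(z)^{\,a - 2(p-1)},
$$
and since $a - 2(p-1) < 0$ the right-hand side tends to $\infty$ as $z \to \partial\Om$. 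Consequently, after shrinking $\epsilon_0$ if necessary, $K_\Om(z,z)^{p-1} d_\Om(z)^a \ge 1$ for all $z$ with $d_\Om(z) < \epsilon_0$, whence $C_2 \le C_2\, K_\Om(z,z)^{p-1} d_\Om(z)^a$. Adding the two pieces and taking the $p$-th root gives the claim. The only delicate point is the uniform control $|K_\Om(z,w)| \le C_1$ on the far region; this is the step that genuinely uses the boundedness of the kernel off the boundary diagonal rather than the purely local estimate of Proposition~\ref{prop:main}, and it is what makes the finite-cover globalization go through.
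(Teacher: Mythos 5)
Your proof is correct and follows essentially the same route as the paper: the local integral near $z$ is controlled by Proposition~\ref{prop:main}, the far part by the boundedness of $K_\Om$ off the boundary diagonal, and the resulting constant is absorbed using \eqref{eq-ine-dk} together with $a<2(p-1)$. The only difference is cosmetic --- you split via a Lebesgue-number ball $\mathbb{B}(z,r_0)$ around $z$, whereas the paper fixes a finite cover $\{U_j\}_{j=0}^N$ of $\overline\Om$ once and for all and distinguishes the cases $z\in U_j$ and $z\notin U_j$.
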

\begin{proof}
We choose a covering $\{U_j\}_{j=0}^N$ to $\overline\Om$ so that $U_{0}\Subset \Omega$,  $\partial\Omega\subset\bigcup_{j=1}^{N}U_{j}$,
and the integral estimates in Proposition~\ref{prop:main} hold on $U_j$ with some constant $C_j$ for all $j=1,\dots, N$.

Since $K_\Om\in C((\overline\Om\times\overline\Om) \setminus (\partial \Om \times \partial \Om) )$, there is a constant $C > 0$ such that
$$
\left|K_{\Om}\left(w,z\right)\right|\le C
\quad \T{
for all }\quad (w,z)\in \left(\bigcup_{j=1}^N \left( \left( \overline{\Om} \cap \overline{U_j} \right) \times (\overline{\Om}\setminus U_j)\right)\bigcup \left(\overline{U_0} \times \overline{\Om}\right)\right).
$$
Using this and Proposition \ref{prop:main} for $U_j, j = 1, \dots, N$, we get that  for every $z \in \Om$,
$$
\int_{\Om \cap U_j} |K_{\Om}(w,z)|^p d_{\Om}(w)^a dV(w) \leq
C^p \|1\|_{p,a}^p, \text{ if } z \in \Om \setminus U_j,
$$
and
$$
\int_{\Om \cap U_j} |K_{\Om}(w,z)|^p d_{\Om}(w)^a dV(w) \leq
C_j \dfrac{2p-1}{(2p-2-a)(a+1)} K_{\Om}(z,z)^{p-1}d_{\Om}(z)^a, \text{ if } z \in \Om \cap U_j.
$$
Hence, for every $z \in \Om$,
\begin{align*}
\|K_{\Om}(\cdot,z)\|_{p, a}^p &= \int_{\Om} |K_{\Om}(w,z)|^p d_{\Om}(w)^a dV(w)\\
& \leq \int_{U_0} |K_{\Om}(w,z)|^p d_{\Om}(w)^a dV(w) + \sum_{j = 1}^N \int_{\Om \cap U_j} |K_{\Om}(w,z)|^p d_{\Om}(w)^a dV(w) \\
& \leq C^p \|1\|_{p,a}^p + \sum_{j = 1}^N \max \left\{C^p \|1\|_{p,a}^p,  C_j \dfrac{2p-1}{(2p-2-a)(a+1)} K_{\Om}(z,z)^{p-1}d_{\Om}(z)^a \right\}. 
\end{align*}
Moreover, since $a < 2(p-1)$ and, by \eqref{eq-ine-dk}, $d_{\Om}(z)^{-2} \leq K_{\Om}(z,z), z \in \Om$,
$$
K_{\Om}(z,z)^{1-\frac{1}{p}}d_{\Om}(z)^{\frac{a}{p}} \geq d_{\Om}(z)^{\frac{a+2}{p} - 2} \to \infty \text{ as } z \to \partial \Om.
$$
From this and the above inequality the desired estimate follows.
\end{proof}

The lower estimate for $\|K_{\Om}(\cdot, z)\|_{p,a}$ is established under $\mathcal B$-polydisc condition.

\begin{lemma}\label{lem-nor-le}
Let $\Om$ be a bounded smooth pseudoconvex domain in $\C^n$ such that the Bergman kernel is of sharp $\B$-type and $\mathcal B$-polydisc condition is satisfied. For every $p \geq 1$, $a > -1$, and $z \in \Om$ near the boundary $\partial \Om$,
$$
\|K_{\Om}(\cdot, z)\|_{p, a} \gtrsim  K_{\Om}(z,z)^{1 - \frac{1}{p}}d_{\Om}(z)^{\frac{a}{p}}.
$$
\end{lemma}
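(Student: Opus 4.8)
The plan is to bound $\|K_\Om(\cdot,z)\|_{p,a}^p$ from below by restricting the defining integral to the region $E:=\Phi_z^{-1}(P_\lambda(z'))$, where $z'=\Phi_z(z)$ and $P_\lambda(z')$ is the $\B$-polydisc supplied by Definition~\ref{df-BP}, and then transporting the computation to $\Om'=\Phi_z(\Om)$ via the biholomorphic transformation law for the Bergman kernel. Writing $|K_\Om(w,z)|=|\det J_\C\Phi_z(w)|\,|\det J_\C\Phi_z(z)|\,|K_{\Om'}(w',z')|$ with $w'=\Phi_z(w)$, and using that the Jacobians are uniformly comparable to $1$ (sharp $\B$-type) together with $dV(w)\approx dV(w')$, the problem reduces, modulo the comparabilities $K_\Om(z,z)\approx K_{\Om'}(z',z')$ and (from the $\B$-polydisc condition) $d_\Om(w)\approx d_{\Om'}(w')\approx d_{\Om'}(z')\approx d_\Om(z)$ on $E$, to proving the model inequality
$$
\int_{P_\lambda(z')}|K_{\Om'}(w',z')|^p\,dV(w')\gtrsim K_{\Om'}(z',z')^{p-1}.
$$

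The heart of the argument is this last estimate, and the idea is to exploit that $K_{\Om'}(\cdot,z')$ is holomorphic on $\Om'$ while $P_\lambda(z')$ is a genuine polydisc contained in $\Om'$. Shrinking the radius slightly so that the closed polydisc lies in $\Om'$, the multivariable mean value property gives $K_{\Om'}(z',z')=|P_\lambda(z')|^{-1}\int_{P_\lambda(z')}K_{\Om'}(w',z')\,dV(w')$, whence $\int_{P_\lambda(z')}|K_{\Om'}(w',z')|\,dV(w')\ge |P_\lambda(z')|\,K_{\Om'}(z',z')$ because $K_{\Om'}(z',z')>0$. Applying Jensen's inequality to the convex function $t\mapsto t^p$ against the normalized measure $dV/|P_\lambda(z')|$ then yields $\int_{P_\lambda(z')}|K_{\Om'}(w',z')|^p\,dV(w')\ge |P_\lambda(z')|\,K_{\Om'}(z',z')^p$. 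Finally, the polydisc has volume $|P_\lambda(z')|=\pi^n\lambda^{2n}\prod_{j=1}^n b_j(z',z')^{-2}\approx K_{\Om'}(z',z')^{-1}$, where the comparability is exactly the sharp $\B$-type bound $K_{\Om'}(z',z')\approx\prod_{j=1}^n b_j^2(z',z')$; substituting this into the previous inequality produces the model inequality.

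Putting the pieces together, $\|K_\Om(\cdot,z)\|_{p,a}^p\gtrsim d_\Om(z)^a\,K_\Om(z,z)^{p-1}$, and taking $p$-th roots gives the claimed lower bound. I expect the only genuinely delicate point to be the model inequality: the combination of the holomorphic mean value identity (to pass from the diagonal value to an integral of the off-diagonal kernel) with Jensen's inequality (to promote the $L^1$ lower bound to an $L^p$ one) is what makes the argument work without needing any pointwise near-diagonal lower bound on $K_{\Om'}(w',z')$. The remaining steps are bookkeeping with the transformation formula and the comparabilities built into Definitions~\ref{df-bsharp} and~\ref{df-BP}, where one must only take care that the boundary-distance comparabilities survive the biholomorphism $\Phi_z$.
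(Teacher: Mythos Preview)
Your proposal is correct and follows essentially the same route as the paper: transport via $\Phi_z$, restrict the integral to the $\B$-polydisc $P_\lambda(z')$, use the comparabilities from Definitions~\ref{df-bsharp} and~\ref{df-BP} to pull $d_{\Om}(w)^a$ out as $d_{\Om}(z)^a$, apply the sub-mean/mean-value property to the holomorphic function $K_{\Om'}(\cdot,z')$, and finish with $\mathrm{Vol}(P_\lambda(z'))\approx K_{\Om'}(z',z')^{-1}$. The only cosmetic difference is that the paper invokes the sub-mean-value inequality for the plurisubharmonic function $|K_{\Om'}(\cdot,z')|^p$ directly, whereas you obtain the same inequality via the holomorphic mean value identity followed by Jensen; both yield $\int_{P_\lambda(z')}|K_{\Om'}(w',z')|^p\,dV(w')\ge |P_\lambda(z')|\,K_{\Om'}(z',z')^p$.
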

\begin{proof}
For every $z \in \Om$ near the boundary $\partial \Om$, put $w' = \Phi_z(w)$ and $\Om' = \Phi_z(\Om)$. By the invariant formula, 
$$
K_{\Om}(z,w) = \det J_\C\Phi_z(z) K_{\Phi_z(\Om)}(\Phi_z(z), \Phi_z(w)) \overline{\det J_\C\Phi_z(w)},
$$
the fact that $ C^{-1} \leq |\det J_\C\Phi_z(w)|\leq C$ for all $w$ in a neighborhood  of $z$, we get
\begin{align*}
\|K_{\Om}(\cdot, z)\|_{p, a}^p & = \int_\Om |K_{\Om}(w,z)|^p d_{\Om}(w)^{a}dV(w) \\
 & = \int_{\Om'} |\det J_{\C}\Phi_z^{-1}(z')|^{-p}  |K_{\Om'}(w',z')|^p |\det J_{\C}\Phi_z^{-1}(w')|^{-p} \\
& \qquad \qquad \qquad \times  d_{\Om}(\Phi_z^{-1}(w'))^{a} |\det J_{\C}\Phi_z^{-1}(w')| dV(w') \\
 \ge & \int_{P_{\lambda}(z')}  |K_{\Om'}(w',z')|^p d_{\Om}(\Phi_z^{-1}(w'))^{a}  |\det J_{\C}\Phi_z(z)|^p |\det J_{\C}\Phi_z^{-1}(w')|^{1-p} dV(w') \\
 \gtrsim &\;  \int_{P_{\lambda}(z')} |K_{\Om'}(w',z')|^p d_{\Om'}(w')^{a}dV(w') \gtrsim \;  d_{\Om'}(z')^{a} \int_{P_\lambda(z')} |K_{\Om'}(w',z')|^pdV(w')\\
\gtrsim &\; d_{\Om'}(z')^{a}K_{\Om'}(z',z')^{p}\T{Vol}(P_\lambda(z')) \gtrsim \;  K_{\Om'}(z',z')^{p - 1}d_{\Om'}(z')^{a},
\end{align*}
where we use the sub-mean property and the fact that
$$
\T{Vol}(P_\lambda(z'))=\pi^n\lambda^{2n}\left(\prod_{j=1}^n b_{j}(z',z')\right)^{-2}\approx (K_{\Om'}(z',z'))^{-1}.
$$	
Moreover, since $K_{\Om}(z,z)=|\det J_\C\Phi_z(z)|^2K_{\Om'}(z',z')$ and $C^{-1} \leq |\det J_\C\Phi_z(z)|\leq C$,   
\begin{eqnarray*}
\|K_{\Om}(\cdot, z)\|_{p, a} \gtrsim  K_{\Om}(z,z)^{1-\frac{1}{p}}d_{\Om}(z)^{\frac{a}{p}}.
\end{eqnarray*}
\end{proof}

\section{Boundedness}\label{sec-bd}

In this section we give sufficient conditions for boundedness of the general operator $T_{\psi}: L_a^p(\Om) \to L_a^q(\Om)$ and necessary ones for the special operator $T_{\alpha, \beta}: L_a^p(\Om) \to L_a^q(\Om)$ with $1 < p \leq q < \infty$.

\subsection{Sufficiency}
Using Proposition \ref{prop:main} and some ideas in the proof of the generalized version of Schur's test \cite[Theorem~5.1]{KLT18}, we get the following result.

\begin{proposition}\label{prop:U} 
Let $1 < p \le q <\infty$, $-1 < a < \frac{q}{p'}$, and $\Om$ be a bounded domain in $\C^n$ such that the Bergman kernel $K_{\Om}$ is  of sharp $\B$-type. Suppose that $\psi:\Om\to \C$ is a measurable function such that $M^{\psi}_{\Om, p, q, a}(w) \in L^{\infty}(\Om)$.
Then, for each $z_0\in \partial\Om$, there exists a neighbourhood $U$ of $z_0$ such that the Toeplitz operator $T_{\psi,U}$ defined by
$$
(T_{\psi,U}f)(z):=\int_{\Om\cap U}K_{\Om}(z,w)\psi(w)f(w)dV(w)\quad \T{for }z\in \Om\cap U, 
$$ 
maps from $L^p_{a}(\Om\cap U)$ to $L^q_{a}(\Om\cap U)$ continuously and 
\begin{eqnarray}
\label{eqn:norm T local}
\no{T_{\psi,U}}_{L^p_{a}(\Om\cap U) \to L^q_{a}(\Om\cap U)}\le C\left(\dfrac{p'+q}{(1+a)(1 - \frac{a p'}{q})}\right)^{\frac{1}{p'}+\frac{1}{q}} \|M^{\psi}_{\Om, p, q, a}\|_{\infty}, 
\end{eqnarray}
where $C$ is independent of $p,q$ and $a$, and
$$
L^p_{a}(\Om\cap U) = \left \{f \text{ is measurable on } \Om \cap U:  \no{f}_{L^p_{a}(\Om\cap U)} = \left( \int_{\Om \cap U} |f(z)|^p dV_{a}(z) \right)^{\frac{1}{p}} < \infty \right\}.
$$
\end{proposition}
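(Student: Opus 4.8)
The plan is to realize the bound as a generalized Schur test for the positive integral operator with kernel $|K_{\Om}(z,w)|\,|\psi(w)|$, run as a two-step Hölder--Minkowski argument anchored on Proposition~\ref{prop:main}. Fix $z_0\in\partial\Om$ and let $U$ be the neighbourhood produced by that proposition; since $|K_{\Om}(z,w)|=|K_{\Om}(w,z)|$ and every integration takes place over $\Om\cap U$, the kernel estimate is available in both variables. First I would introduce the test function $\phi(w):=d_{\Om}(w)^{-\frac{1+a}{q+p'}}$, write $|K_{\Om}|=|K_{\Om}|^{1/p'}|K_{\Om}|^{1/p}$, insert $\phi(w)\phi(w)^{-1}$, and apply Hölder with exponents $p'$ and $p$. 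The $p'$-factor is exactly $I_{1,b}(z)^{1/p'}$ with $b=-\tfrac{(1+a)p'}{q+p'}$, which lies in $(-1,0)$ precisely because $a<q/p'$; Proposition~\ref{prop:main} with kernel power $1$ then controls it by a constant times $\bigl((-b)(1+b)\bigr)^{-1/p'}d_{\Om}(z)^{b/p'}$.

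Raising the resulting pointwise bound for $|T_{\psi,U}f(z)|$ to the power $q$, multiplying by $d_{\Om}(z)^a$ and integrating in $z$, I am left with the $L^{q/p}$-norm, against a distance weight, of the inner integral $F(z):=\int_{\Om\cap U}|K_{\Om}(z,w)|\,\phi(w)^{-p}|\psi(w)|^p|f(w)|^p\,dV(w)$. Because $q/p\ge1$, I would estimate this by Minkowski's integral inequality, which moves the outer norm onto $\||K_{\Om}(\cdot,w)|\|_{L^{q/p}}=I_{q/p,\tilde b}(w)^{p/q}$ with $\tilde b=a+\tfrac{bq}{p'}=-\tfrac{q-ap'}{q+p'}$; this is the second appeal to Proposition~\ref{prop:main}, now with kernel power $q/p\ge1$, and $\tilde b\in(-1,2q/p-2)$ exactly because $a>-1$. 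The decisive bookkeeping step is that the accumulated distance powers telescope: the factor $d_{\Om}^{\,\tilde b p/q}$ from the kernel estimate combines with $\phi^{-p}=d_{\Om}^{-bp/p'}$ to leave $d_{\Om}(w)^{ap/q}$, and together with $K_{\Om}(w,w)^{1-p/q}|\psi(w)|^p$ this is precisely $M^{\psi}_{\Om,p,q,a}(w)^p\,d_{\Om}(w)^a$. Hence $\|M^{\psi}_{\Om,p,q,a}\|_{\infty}^p$ factors out and the $L^{q/p}$-norm of $F$ is $\les\|M^{\psi}_{\Om,p,q,a}\|_{\infty}^p\|f\|_{p,a}^p$ times the second Schur constant.

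It then remains to assemble and sharpen the constant. A direct computation gives $(-b)(1+b)=\tfrac{(1+a)(q-ap')p'}{(q+p')^2}$, so the first Schur constant equals $\tfrac{(q+p')^2}{p'(1+a)(q-ap')}$; for the second, using $2q/p-1=\tfrac{2(q-p)}{p}+1$ and $2q/p-2-\tilde b=\tfrac{2(q-p)}{p}+\tfrac{q-ap'}{q+p'}$ together with $\tilde b+1=\tfrac{p'(1+a)}{q+p'}$ shows it is bounded by the same quantity. Since $\tfrac{(q+p')^2}{p'(1+a)(q-ap')}=\bigl(\tfrac1{p'}+\tfrac1q\bigr)\tfrac{(p'+q)q}{(1+a)(q-ap')}\le\tfrac{p'+q}{(1+a)(1-ap'/q)}$, raising to the powers $1/p'$ and $1/q$ produces exactly the claimed factor. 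I expect the genuine obstacle to be this final point: the strict inequality $p<q$ forces the outer exponent $q/p>1$, so Minkowski's inequality and a fractional-power kernel estimate are unavoidable, and it takes care both to verify that the telescoping of the distance weights is exact and to see that the specific test-function exponent $-\tfrac{1+a}{q+p'}$ is what turns the two Schur constants into this clean expression rather than a lossy one.
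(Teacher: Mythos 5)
Your proposal is correct and follows essentially the same route as the paper's proof: the same H\"older--Minkowski Schur-test scheme with the splitting $|K_{\Om}|=|K_{\Om}|^{1/p'}|K_{\Om}|^{1/p}$, the same two applications of Proposition~\ref{prop:main} with kernel powers $1$ and $q/p$, and your test-function exponent $\tfrac{1+a}{q+p'}$ is exactly the paper's optimal choice $\gamma_0$ (the paper merely keeps $\gamma$ free until the final optimization). The exponent telescoping and the constant computation both check out, your handling of the constant via $\tfrac1{p'}+\tfrac1q\le1$ being a harmless cosmetic variant of the paper's $x^x\le4$ bound.
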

\begin{proof}
Take the neighbourhood $U$ of $z_0$ as in Proposition \ref{prop:main}. 
We put $\delta := \frac{1}{p'}$ and
$$
g(z) := d_{\Om}(z)^{-\gamma}, h_1(w) := d_{\Om}(w)^{-\gamma},
h_2(w) := K_{\Om}(w,w)^{\frac{1}{p} - \frac{1}{q}}d_{\Om}(w)^{\frac{a}{q} - \gamma},
$$
with
\begin{equation}\label{eq361}
\max \left\{ 0, \frac{a}{q} \right\} 
< \gamma < \min \left\{ \dfrac{1}{p'}, \dfrac{a + 1}{q} \right\}.
\end{equation}

Since $M^{\psi}_{\Om, p, q, a}(w) \in L^{\infty}(\Om)$,
\begin{align}\label{eq-infty}
\nonumber & \esssup_{w \in \Om}|h_1^{-1}(w)h_2(w)\psi(w) d_{\Om}(w)^{-\frac{a}{p}}| \\
\nonumber =  &\esssup_{w \in \Om} |\psi(w)| d_{\Om}(w)^{\gamma} K_{\Om}(w,w)^{\frac{1}{p} - \frac{1}{q}} d_{\Om}(w)^{\frac{a}{q}  - \gamma} d_{\Om}(w)^{-\frac{a}{p}}\\
 = & \esssup_{w \in \Om} |\psi(w)| K_{\Om}(w,w)^{\frac{1}{p} - \frac{1}{q}}d_{\Om}(w)^{a\left(\frac{1}{q} - \frac{1}{p} \right)} = \|M^{\psi}_{\Om, p, q, a}\|_{\infty} < \infty.
\end{align}

Put $a_1:= \delta p' = 1$ and $ b_1: =  - \gamma p' \in (-1, 2a_1 - 2)$ by \eqref{eq361}. Hence,  using Proposition \ref{prop:main} for $(a_1, b_1)$,
we obtain that, for every $z \in \Om \cap U$,
\begin{align}\label{eq-g}
\nonumber \int_{\Om \cap U} |K_{\Om}(z,w)|^{\delta p'} h_1(w)^{p'}dV(w) = & \int_{\Om \cap U} |K_{\Om}(z,w)|^{\delta p'} d_{\Om}(w)^{ - \gamma p'} dV(w) \\
 \leq & \; C_1 \dfrac{1}{\gamma p'(1 - \gamma p')} d_{\Om}(z)^{ - \gamma p'} =  \; C_1 \tau_1(\gamma) g(z)^{p'},
\end{align}
where $\tau_1(\gamma): = \dfrac{1}{\gamma p'(1 - \gamma p')}$.

On the other hand, it is clear that $a_2:= (1 - \delta) q = \frac{q}{p} \geq 1, b_2: = a - \gamma q  \in (-1, 2a_2 - 2)$ by \eqref{eq361}. Then, using Proposition \ref{prop:main} for $(a_2, b_2)$, we get that, for every $w \in \Om \cap U$,
\begin{align}\label{eq-h2}
\nonumber & \int_{\Om \cap U} |K_{\Om}(z,w)|^{(1-\delta)q} g(z)^q d_{\Om}(z)^a dV(z) =  \int_{\Om \cap U} |K_{\Om}(z,w)|^{(1-\delta)q} d_{\Om}(z)^{a - \gamma q}dV(z) \\
\leq &\; C_2 \dfrac{\frac{2q}{p} - 1}{\left(\frac{2q}{p} - 2 - a + \gamma q \right)\left(a  - \gamma q + 1\right)}K_{\Om}(w,w)^{\frac{q}{p} - 1}d_{\Om}(w)^{a - \gamma q} =  \; C_2 \tau_2(\gamma) h_2(w)^q,
\end{align}
where $\tau_2(\gamma) = \dfrac{\frac{2q}{p} - 1}{\left(\frac{2q}{p} - 2 - a + \gamma q \right)\left(a  - \gamma q + 1\right)}$.

Now, using H\"older's inequality and \eqref{eq-g}, for every $f \in L^p_a(\Om \cap U)$ and every $z \in \Om \cap U$, we obtain
\begin{align*}
\left| T_{\psi, U}f(z) \right| & \leq \int_{\Om \cap U} \left( |K_{\Om}(z,w)|^{\delta}h_1(w) \right) \left( |K_{\Om}(z,w)|^{1 - \delta} h_1(w)^{-1} |\psi(w)| |f(w)| \right)dV(w) \\
& \leq \left( C_1 \tau_1(\gamma) g(z)^{p'} \right)^{\frac{1}{p'}} \left( \int_{\Om \cap U}|K_{\Om}(z,w)|^{(1 - \delta)p} h_1(w)^{-p}  |\psi(w)|^p |f(w)|^p  dV(w)  \right)^{\frac{1}{p}} \\
& \leq \left( C_1 \tau_1(\gamma) \right)^{\frac{1}{p'}} \left( \int_{\Om \cap U}|K_{\Om}(z,w)|^{(1 - \delta)p} g(z)^p h_1(w)^{-p}  |\psi(w)|^p |f(w)|^p  dV(w)  \right)^{\frac{1}{p}}.
\end{align*}
From this, \eqref{eq-infty}, \eqref{eq-h2}, and using Minkowski's inequality (see, \cite[Theorem~5.3]{KLT18}) for $\eta = \frac{q}{p} \geq 1$, we get that for every $f \in L^p_a(\Om \cap U)$,
\begin{align*}
& \|T_{\psi, U} f\|_{L_a^q(\Om \cap U)}^p  \leq   \left( C_1 \tau_1(\gamma) \right)^{\frac{p}{p'}} \\
& \times \left( \int_{\Om \cap U} \left( \int_{\Om \cap U}|K_{\Om}(z,w)|^{(1 - \delta)p} g(z)^p h_1(w)^{-p}  |\psi(w)|^p |f(w)|^p dV(w) \right)^{\frac{q}{p}} dV_a(z) \right)^{\frac{p}{q}}\\
=  &  \left( C_1 \tau_1(\gamma) \right)^{\frac{p}{p'}} \\
 & \times \left( \int_{\Om \cap U} \left( \int_{\Om \cap U}|K_{\Om}(z,w)|^{(1 - \delta)p} g(z)^p d_{\Om}(z)^{\frac{ap}{q}} h_1(w)^{-p}  |\psi(w)|^p |f(w)|^p dV(w) \right)^{\frac{q}{p}} dV(z) \right)^{\frac{p}{q}} \\
 \leq & \left( C_1 \tau_1(\gamma) \right)^{\frac{p}{p'}} \\
 & \times \int_{\Om \cap U} \left( \int_{\Om \cap U} |K_{\Om}(z,w)|^{(1 - \delta)q} g(z)^q d_{\Om}(z)^{a} h_1(w)^{-q}  |\psi(w)|^q |f(w)|^q dV(z) \right)^{\frac{p}{q}}dV(w) \\
 \leq & \left( C_1 \tau_1(\gamma) \right)^{\frac{p}{p'}} \left(C_2 \tau_2(\gamma) \right)^{\frac{p}{q}} \int_{\Om \cap U} h_2(w)^p h_1(w)^{-p}  |\psi(w)|^p d_{\Om}(w)^{-a} |f(w)|^p dV_a(w) \\
 \leq & \left( C_1 \tau_1(\gamma) \right)^{\frac{p}{p'}} \left(C_2 \tau_2(\gamma) \right)^{\frac{p}{q}} \|M^{\psi}_{\Om, p, q, a}\|_{\infty}^p \|f\|^p_{L_a^p(\Om \cap U)}
\end{align*}

Consequently, the Toeplitz operator $T_{\psi, U}: L^p_{a}(\Om\cap U) \to L^q_{a}(\Om\cap U)$ is continuous and  
$$
\no{T_{\psi, U}}_{L_a^p(\Om \cap U) \to L^q_{a}(\Om\cap U)} \le C \tau_1(\gamma)^{\frac{1}{p'}} \tau_2(\gamma)^{\frac{1}{q}}  \|M^{\psi}_{\Om, p, q, a}\|_{\infty}.
$$
Now we give an upper estimate for $\tau_1(\gamma)^{\frac{1}{p'}} \tau_2(\gamma)^{\frac{1}{q}}$. By \eqref{eq361}, $0 < \gamma q - a < 1$. Then by the inequality $\frac{x+a}{x+b}\le \frac{a}{b}$ for $a\ge b>0$ and $x\ge 0$, we obtain
\begin{align*}
\tau_1(\gamma)^{\frac{1}{p'}} \tau_2(\gamma)^{\frac{1}{q}} & =  \left(\dfrac{1}{ \gamma p' (1 - \gamma p')}\right)^{\frac{1}{p'}} \left( \dfrac{\left( \frac{2q}{p} - 2 \right) + 1}{\left(\frac{2q}{p} - 2 - a + \gamma q \right)\left(a  - \gamma q + 1\right)}\right)^{\frac{1}{q}} \\
& \leq \left(\dfrac{1}{ \gamma p' (1 - \gamma p')}\right)^{\frac{1}{p'}} \left( \dfrac{1}{\left(\gamma q - a \right)\left(a  - \gamma q + 1\right)}\right)^{\frac{1}{q}}.
\end{align*}
It is easy to check that the number $\gamma_0:= \frac{1 + a}{p'+q}$ satisfies \eqref{eq361}. Then 
\begin{align*}
\tau_1(\gamma_0)^{\frac{1}{p'}} \tau_2(\gamma_0)^{\frac{1}{q}} & \leq \left( \dfrac{1}{\frac{p'(1+a)}{p'+q} \frac{q - a p'}{p'+q}} \right)^{\frac{1}{p'} + \frac{1}{q}} = \left( \dfrac{(p'+q)^2}{p'(1+a) (q - a p')} \right)^{\frac{1}{p'} + \frac{1}{q}} \\
& = \left( \frac{1}{p'} + \frac{1}{q} \right)^{\frac{1}{p'} + \frac{1}{q}} \left(  \dfrac{p'+q}{(1+a) (1 - \frac{a p'}{q})} \right)^{\frac{1}{p'} + \frac{1}{q}} \\
& \leq 4 \left(  \dfrac{p'+q}{(1+a) (1 - \frac{a p'}{q})} \right)^{\frac{1}{p'} + \frac{1}{q}},
\end{align*}
where the last inequality follows by $x^x\le 4$ for $x=\frac{1}{p'}+\frac{1}{q}\in [0,2]$. Thus, the desired estimate \eqref{eqn:norm T local} follows.
\end{proof}

\begin{theorem}\label{thm:main1}
	Let $1 < p \leq q < \infty$, $-1 < a < \frac{q}{p'}$, and $\Om$ be a bounded domain in $\C^n$ such that the Bergman kernel  $K_{\Om}$ is of sharp $\B$-type. 
	Suppose that  $\psi:\Om\to \C$ is a measurable function such that $M^{\psi}_{\Om, p, q, a}(w) \in L^{\infty}(\Om)$. Then the Toeplitz operator $T_{\psi}:L^p_{a}(\Om)\to L^q_{a}(\Om)$ is continuous. Furthermore, 
		$$
		\no{T_\psi}_{L^p_{a}(\Om)\to L^q_{a}(\Om)} \le C \left(\dfrac{p'+q}{(1+a)(1 - \frac{a p'}{q})}\right)^{\frac{1}{p'}+\frac{1}{q}}  \|M^{\psi}_{\Om, p, q, a}\|_{\infty}, 
		$$
		where $C$ is independent of $p, q, a$.
\end{theorem}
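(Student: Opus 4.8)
The plan is to pass from the local estimate of Proposition~\ref{prop:U} to the global one by a finite covering argument, exactly as in the proof of Lemma~\ref{lem-nor-ue}, and to control the remaining ``off-diagonal'' contributions using the boundedness of $K_\Om$ away from the diagonal of $\partial\Om\times\partial\Om$. Concretely, I would first fix a finite cover $\{U_j\}_{j=0}^N$ of $\overline\Om$ with $U_0\Subset\Om$ and $\partial\Om\subset\bigcup_{j=1}^N U_j$, chosen so that Proposition~\ref{prop:U} applies on each $U_j$ with $j\ge1$ and so that, as recorded in the proof of Lemma~\ref{lem-nor-ue}, $|K_\Om(z,w)|\le C$ whenever $(z,w)$ lies in $\bigcup_{j\ge1}\big((\overline\Om\cap\overline{U_j})\times(\overline\Om\setminus U_j)\big)\cup(\overline{U_0}\times\overline\Om)$; such a cover exists by compactness of $\partial\Om$ together with property (i) of sharp $\B$-type. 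Since $\{U_j\}$ covers $\Om$, subadditivity gives
$$
\|T_\psi f\|_{L^q_a(\Om)}^q\le\sum_{j=0}^N\|T_\psi f\|_{L^q_a(\Om\cap U_j)}^q,
$$
so it suffices to bound each summand.

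For $j\ge1$ and $z\in\Om\cap U_j$ I would split the defining integral at $\Om\cap U_j$ and $\Om\setminus U_j$. The first piece is exactly $T_{\psi,U_j}(f|_{\Om\cap U_j})(z)$, whose $L^q_a(\Om\cap U_j)$-norm is controlled by Proposition~\ref{prop:U} by the displayed constant times $\|M^{\psi}_{\Om,p,q,a}\|_\infty\|f\|_{L^p_a(\Om)}$; this yields the main term. The second piece, together with the entire $j=0$ term, is an integral against a bounded kernel. To treat it I would first note that $M^{\psi}_{\Om,p,q,a}\in L^\infty(\Om)$ and \eqref{eq-ine-dk} force the pointwise bound $|\psi(w)|\le\|M^{\psi}_{\Om,p,q,a}\|_\infty\,d_\Om(w)^{(2+a)(\frac1p-\frac1q)}$, since $|\psi|=M^{\psi}_{\Om,p,q,a}\,K_\Om(\cdot,\cdot)^{\frac1q-\frac1p}d_\Om^{a(\frac1p-\frac1q)}$ with $\frac1q-\frac1p\le0$ and $K_\Om\ge d_\Om^{-2}$. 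Hence on the bounded-kernel region
$$
\Big|\int K_\Om(z,w)\psi(w)f(w)\,dV(w)\Big|\le C\|M^{\psi}_{\Om,p,q,a}\|_\infty\int_\Om d_\Om(w)^{(2+a)(\frac1p-\frac1q)}|f(w)|\,dV(w),
$$
a quantity independent of $z$, and Hölder's inequality with exponents $p,p'$ bounds the last integral by $\|f\|_{L^p_a(\Om)}\big(\int_\Om d_\Om^{(\frac2p-\frac{2+a}{q})p'}\,dV\big)^{1/p'}$.

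The crux is to verify that this auxiliary constant is finite and is dominated by the stated bound. Finiteness of $\int_\Om d_\Om^{(\frac2p-\frac{2+a}{q})p'}\,dV$ amounts to the exponent exceeding $-1$, which a short computation reduces to $a<2q-2-\frac q{p'}$; since $q\ge p$ one has $\frac q{p'}=q-\frac qp\le q-1\le 2q-2-\frac q{p'}$, so the hypothesis $a<\frac q{p'}$ guarantees it. Multiplying by $\|1\|_{L^q_a(\Om\cap U_j)}=\big(\int_\Om d_\Om^a\,dV\big)^{1/q}$ and comparing the blow-up of these factors as $a\to-1^+$ (where $\|1\|_{L^q_a}\sim(1+a)^{-1/q}$, dominated by $(1+a)^{-(\frac1{p'}+\frac1q)}$) and as $a\to(\tfrac q{p'})^-$ (where the factor $(1-\frac{ap'}{q})^{-(\frac1{p'}+\frac1q)}$ in the main term blows up at least as fast as the Hölder constant) shows that the off-diagonal contribution is absorbed into the main constant.

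Finally, summing the $N+1$ terms — a number depending only on $\Om$ — preserves the form of the estimate at the cost of enlarging the universal constant $C$, giving the claimed bound. I expect the main obstacle to be precisely this bookkeeping: confirming that neither the off-diagonal Hölder constant nor the factor $\|1\|_{L^q_a}$ degrades the explicit dependence on $p$, $q$, $a$ produced by Proposition~\ref{prop:U}; everything else is a routine localization.
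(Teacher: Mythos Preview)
Your proposal is correct and follows essentially the same route as the paper: localize via a finite cover with $U_0\Subset\Om$ and $\partial\Om\subset\bigcup_{j\ge1}U_j$, apply Proposition~\ref{prop:U} to the on-patch piece, and control the off-patch (bounded-kernel) pieces by H\"older with the same exponent $b=p'(\tfrac2p-\tfrac{a+2}q)>-1$, then check that $\|1\|_{q,a}\|1\|_{p',b}$ is absorbed into the main constant. The only cosmetic difference is that the paper uses a partition of unity $\{\chi_j\}$ rather than your raw subadditivity over the cover, and it carries out explicitly the endpoint comparison you sketch (showing $\|1\|_{q,a}\lesssim(1+a)^{-1/q}$, $\|1\|_{p',b}\lesssim(1-\tfrac{ap'}q)^{-1/p'}$, and bounding the leftover factor $(1+a)^{1/p'}(1-\tfrac{ap'}q)^{1/q}$ by $(p'+q)^{\frac1{p'}+\frac1q}$).
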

\begin{proof}
We choose a partition of unity $\{\chi_j\}_{j=0}^{N}$ and a covering $\{U_j\}_{j=0}^N$ to $\overline\Om$ 
so that ${\T{supp}(\chi_j)}\Subset U_j$, $U_{0}\Subset \Omega$,  $\partial\Omega\subset\bigcup_{j=1}^{N}U_{j}$,
and the results in Proposition~\ref{prop:U} hold on $U_j$ for all $j=1,\dots, N$. Then, for every $f \in L^p_a(\Om)$, we can decompose $T_\psi f$ as 
$$
T_{\psi}f=\sum_{j=0}^N\chi_jT_{\psi}f=\chi_0T_\psi f+\sum_{j=1}^N \chi_j T_{\psi}(f{\bf 1}_{\Om\cap U_j})+\sum_{j=1}^N \chi_j T_{\psi}(f{\bf 1}_{\Om\setminus U_j}),
$$
where ${\bf 1}_A$ is the characteristic function of a subset $A \subset \Om$. From this it follows that
\begin{eqnarray*}
\label{eqn:a1}\no{T_{\psi}f}_{q, a}\le \no{\chi_0T_\psi f}_{q, a}+\sum_{j=1}^N \no{\chi_j T_{\psi}(f{\bf 1}_{\Om\cap U_j})}_{q, a}+\sum_{j=1}^N \no{\chi_j T_{\psi}(f{\bf 1}_{\Om\setminus U_j})}_{q, a}.
\end{eqnarray*}
To continue, we need several estimates for the norms $\no{\chi_0T_\psi f}_{q, a}$, $\no{\chi_j T_{\psi}(f{\bf 1}_{\Om\setminus U_j})}_{q, a}$, and $\no{\chi_j T_{\psi}(f{\bf 1}_{\Om\cap U_j})}_{q, a}$ with $j = 1,\dots, N$.

\textbf{Estimates for $\no{\chi_j T_{\psi}(f{\bf 1}_{\Om\setminus U_j})}_{q,a}$ and $\no{\chi_0T_\psi f}_{q, a}$}.  Since 
$$
K_{\Om}\in C\left(\left(\overline{\Omega}\times\overline{\Omega}\right)\setminus \left( \partial \Om \times \partial\Omega\right) \right),
$$ 
there exists a positive constant $C$ such that 
\[
\left|K_{\Om}\left(z,w\right)\right|\le C
\quad \T{
for all }\quad (z,w)\in \left(\bigcup_{j=1}^N \left( \T{supp}(\chi_j) \times (\overline{\Om}\setminus U_j)\right)\bigcup \left(\T{supp}(\chi_0)\times \overline{\Om}\right)\right).
\] 
Thus, for $j=1,\dots,N$, and $z\in \Om$, using H\"older's inequality and \eqref{eq-ine-dk}, we get
\begin{align*}
&\left|\left(\chi_jT_{\psi} (f{\bf1}_{\Om\setminus U_j})\right)(z)\right| \\
= & \left|\int_{\Om}\chi_j(z)K_{\Om}(z,w)\psi(w)f(w){\bf1}_{\Om\setminus U_j}(w)dV(w)\right| \\
\leq &\; C \|M^{\psi}_{\Om, p, q, a}\|_{\infty} \int_{\Om} |f(w)| K_{\Om}(w,w)^{\frac{1}{q}-\frac{1}{p}}d_{\Om}(w)^{-a\left(\frac{1}{q} - \frac{1}{p}\right)} dV(w) \\
= &\; C \|M^{\psi}_{\Om, p, q, a}\|_{\infty} \int_{\Om} |f(w)| d_{\Om}(w)^{\frac{a}{p}} K_{\Om}(w,w)^{\frac{1}{q}-\frac{1}{p}}d_{\Om}(w)^{-\frac{a}{q}} dV(w) \\
\leq & \; C \|M^{\psi}_{\Om, p, q, a}\|_{\infty} \left( \int_{\Om}|f(w)|^p d_{\Om}(w)^{a}dV(w) \right)^{\frac{1}{p}} \\
& \qquad \qquad  \qquad \qquad \qquad \qquad \times  \left( \int_{\Om} K_{\Om}(w,w)^{p'\left(\frac{1}{q}-\frac{1}{p}\right)}d_{\Om}(w)^{- \frac{ap'}{q}}dV(w)\right)^{\frac{1}{p'}} \\
\leq & \; C \|M^{\psi}_{\Om, p, q, a}\|_{\infty} \left( \int_{\Om}|f(w)|^p d_{\Om}(w)^{a}dV(w) \right)^{\frac{1}{p}} \left( \int_{\Om} d_{\Om}(w)^{p'\left(\frac{2}{p}-\frac{a + 2}{q} \right)}dV(w)\right)^{\frac{1}{p'}} \\
\leq & \; C \|M^{\psi}_{\Om, p, q, a}\|_{\infty} \|1\|_{p',b} \|f\|_{p,a},
\end{align*}
where, obviously, $b := p'\left(\frac{2}{p}-\frac{a+2}{q} \right) > -1$ by hypothesis, and hence, $\|1\|_{p',b} < \infty$. Thus,
\begin{eqnarray*}
\label{eqn:a2}\no{\chi_jT_{\psi} (f{\bf1}_{\Om\setminus U_j})}_{q, a}\le \; C \|M^{\psi}_{\Om, p, q, a}\|_{\infty} \|1\|_{q, a} \|1\|_{p',b} \no{f}_{p, a},
\end{eqnarray*}
where $C$ is independent of $p, q, a$. Analogously, 
\begin{eqnarray*}
\label{eqn:a3}\no{\chi_0T_{\psi} (f)}_{q,a}\le C \|M^{\psi}_{\Om, p, q, a}\|_{\infty}  \|1\|_{q, a} \|1\|_{p',b} \no{f}_{p,a}.
\end{eqnarray*}

\textbf{Estimates for $\no{\chi_j T_{\psi}(f{\bf 1}_{\Om\cap U_j})}_{q, a}$}. For $j=1,\dots,N$, we have
\begin{align*}
\no{\chi_jT_{\psi} (f{\bf1}_{\Om \cap U_j})}_{q, a} & = \left(\int_{\Om} \left|\chi_j(z) T_{\psi}(f {\bf 1}_{\Om \cap U_j})(z)\right|^qdV_{a}(z)\right)^\frac{1}{q} \\
& \leq \left(\int_{\Om \cap U_j} \left|T_{\psi}(f {\bf 1}_{\Om \cap U_j})(z)\right|^qdV_{a}(z)\right)^\frac{1}{q} \\
& =  \left(\int_{\Om \cap U_j} \left|T_{\psi, U_j}(f{\bf 1}_{\Om\cap U_j})(z)\right|^qdV_{a}(z)\right)^\frac{1}{q} \\
& = \|T_{\psi, U_j}(f{\bf 1}_{\Om\cap U_j})\|_{L^q_a(\Om \cap U_j)}. \\
\end{align*}
From this and Proposition~\ref{prop:U}, it follows that
\begin{eqnarray*}
\label{eqn:a4}\no{\chi_jT_{\psi} (f{\bf 1}_{\Om\cap U_j})}_{q, a}\le & C_j \left(\dfrac{p'+q}{(1+a)(1 - \frac{ap'}{q})}\right)^{\frac{1}{p'}+\frac{1}{q}} \|M^{\psi}_{\Om, p, q, a}\|_{\infty} \|f{\bf 1}_{\Om\cap U_j}\|_{L^p_a(\Om \cap U_j)},
\end{eqnarray*}
 for every  $j=1,\dots,N$, where $C_j$ are independent of $p, q, a$.

From the above estimates, we get 
$$
\no{T_{\psi} f}_{q, a}\le C \left[ \|1\|_{q, a} \|1\|_{p', b} + \left(\dfrac{p'+q}{(1+a)(1 - \frac{a p'}{q})}\right)^{\frac{1}{p'}+\frac{1}{q}} \right] \|M^{\psi}_{\Om, p, q, a}\|_{\infty} \no{f}_{p,a},
$$
for every $f\in L^p_{a}(\Om)$, where $C$ is independent of $p, q, a$.

On the other hand, it is not difficult to check that 
$$
\|1\|_{q,a} \leq \left( \dfrac{C}{1+a} \right)^{\frac{1}{q}} \text{ and }  \|1\|_{p',b} \leq \left( \dfrac{C}{1+b} \right)^{\frac{1}{p'}} \leq \left( \dfrac{C}{1 - \frac{ap'}{q}} \right)^{\frac{1}{p'}}
$$
for some constant $C$ independent of $p, q, a$. From this and the hypothesis on $p, q, a$, it follows that
\begin{align*}
\|1\|_{q, a} \|1\|_{p', b} & \leq \left(\dfrac{C}{(1+a)(1 - \frac{a p'}{q})}\right)^{\frac{1}{p'}+\frac{1}{q}} (1+a)^{\frac{1}{p'}}\left(1 - \frac{ap'}{q}\right)^{\frac{1}{q}} \\
& \leq C^{\frac{1}{p'}+\frac{1}{q}} \left(\dfrac{p'+q}{(1+a)(1 - \frac{a p'}{q})}\right)^{\frac{1}{p'}+\frac{1}{q}}.
\end{align*} 
Consequently,
$$
\no{T_{\psi}}_{L^p_a(\Om) \to L^q_a(\Om)} \le C \left(\dfrac{p'+q}{(1+a)(1 - \frac{a p'}{q})}\right)^{\frac{1}{p'}+\frac{1}{q}} \|M^{\psi}_{\Om, p, q, a}\|_{\infty}
$$
for some constant $C$ independent of $p, q, a$.
\end{proof}

The following result follows immediately from Theorem \ref{thm:main1} which is a generalization of \cite[Theorem~2.7(1)]{KLT18}.

\begin{corollary}\label{cor-1}
Let $1 < p \leq q < \infty$ and $\Om$ be a bounded domain in $\C^n$ such that the Bergman kernel $K_{\Om}$ is of sharp $\B$-type. Suppose that  $\psi:\Om\to \C$ is a measurable function such that $M^{\psi}_{\Om, p, q, 0}(w) \in L^{\infty}(\Om)$, i.e.
\begin{equation*}
\esssup_{w \in \Om}\left|\psi(w)\right| K_{\Om}(w,w)^{\frac{1}{p}-\frac{1}{q}} < \infty.
\end{equation*} 
Then the Toeplitz operator $T_{\psi}:L^p(\Om)\to L^q(\Om)$ is continuous. Furthermore, 
		$$
		\no{T_\psi}_{L^p(\Om)\to L^q(\Om)} \le C   \left(\dfrac{p}{p-1} + q\right)^{1 - \frac{1}{p}+\frac{1}{q}}\|M^{\psi}_{\Om, p, q, 0}\|_{\infty}, 
		$$
		where $C$ is dependent only on $\Om$. 		
\end{corollary}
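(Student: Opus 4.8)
The plan is to obtain this corollary as the special case $a = 0$ of Theorem~\ref{thm:main1}. First I would verify that the hypotheses of that theorem are met when $a = 0$: the admissibility condition $-1 < a < \frac{q}{p'}$ becomes $0 < \frac{q}{p'}$, which holds automatically for every $1 < p \leq q < \infty$, and the weighted spaces collapse to the unweighted ones, $L^p_0(\Om) = L^p(\Om)$ and $L^q_0(\Om) = L^q(\Om)$, by the convention fixed in the introduction. Since $K_{\Om}$ is assumed to be of sharp $\B$-type and $\psi$ is measurable with $M^{\psi}_{\Om, p, q, 0} \in L^{\infty}(\Om)$, Theorem~\ref{thm:main1} applies directly and yields both the continuity of $T_{\psi}: L^p(\Om) \to L^q(\Om)$ and the accompanying quantitative norm bound.

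Next I would simplify the weight and the constant. Setting $a = 0$ in the definition of $M$ gives
$$
M^{\psi}_{\Om, p, q, 0}(w) = |\psi(w)| K_{\Om}(w,w)^{\frac{1}{p} - \frac{1}{q}} d_{\Om}(w)^{0} = |\psi(w)| K_{\Om}(w,w)^{\frac{1}{p} - \frac{1}{q}},
$$
so the finiteness hypothesis $\esssup_{w \in \Om} |\psi(w)| K_{\Om}(w,w)^{\frac{1}{p} - \frac{1}{q}} < \infty$ in the corollary is exactly the condition $M^{\psi}_{\Om, p, q, 0} \in L^{\infty}(\Om)$. In the norm estimate of Theorem~\ref{thm:main1} the denominator factors $(1+a)$ and $\left(1 - \frac{ap'}{q}\right)$ both reduce to $1$, so that the constant $\left(\frac{p'+q}{(1+a)(1 - ap'/q)}\right)^{\frac{1}{p'} + \frac{1}{q}}$ becomes simply $(p'+q)^{\frac{1}{p'} + \frac{1}{q}}$.

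Finally, to present the bound in the advertised form I would rewrite the base and the exponent in terms of $p$: using $p' = \frac{p}{p-1}$ gives $p' + q = \frac{p}{p-1} + q$, and using $\frac{1}{p'} = 1 - \frac{1}{p}$ gives $\frac{1}{p'} + \frac{1}{q} = 1 - \frac{1}{p} + \frac{1}{q}$. This converts the estimate into
$$
\no{T_\psi}_{L^p(\Om) \to L^q(\Om)} \le C \left( \frac{p}{p-1} + q \right)^{1 - \frac{1}{p} + \frac{1}{q}} \|M^{\psi}_{\Om, p, q, 0}\|_{\infty},
$$
with $C$ depending only on $\Om$, exactly as claimed. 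I expect no genuine obstacle: the whole content is the specialization $a = 0$ of the already-established Theorem~\ref{thm:main1}, and the only care required is the purely algebraic rewriting of the constant so that it appears in the stated form.
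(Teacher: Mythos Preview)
Your proposal is correct and matches the paper's approach exactly: the paper states that the corollary ``follows immediately from Theorem~\ref{thm:main1}'', i.e., it is precisely the specialization $a=0$ together with the algebraic rewriting $p'=\frac{p}{p-1}$ and $\frac{1}{p'}=1-\frac{1}{p}$ that you carry out.
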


\subsection{Necessity} \label{S3}
The necessary conditions for boundedness of the Toeplitz operator $T_{\alpha, \beta}: L_a^p(\Om) \to L_a^q(\Om)$ is given under $\mathcal B$-polydisc condition. 

  \begin{theorem}\label{thm:main2}
 	Let $1 < p \leq q < \infty$, $-1 < a < \min\{2(p - 1), q - 1\}$, and $\Om$ be a bounded smooth pseudoconvex domain in $\C^n$ such that the Bergman kernel is of sharp $\B$-type and $\mathcal B$-polydisc condition is satisfied. 
If the operator $T_{\alpha, \beta}: L^p_{a}(\Om) \to L^q_{a}(\Om)$ is continuous, then $M^{\alpha, \beta}_{\Om, p, q, a}(w) \in L^{\infty}(\Om)$, i.e.
$$
\sup_{w \in \Om} K_{\Om}(w,w)^{-\alpha + \left(\frac{1}{p} - \frac{1}{q}\right)}d_{\Om}(w)^{\beta + a\left(\frac{1}{q} - \frac{1}{p}\right)} < \infty.
$$
 \end{theorem}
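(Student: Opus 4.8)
\textit{The plan is to test the operator on the Bergman kernels and to compare the resulting lower bound with the operator norm.} Concretely, for $z \in \Om$ near $\partial\Om$ I would take the test function $f_z(w) := K_\Om(w,z)$. Since $a < 2(p-1)$, Lemma~\ref{lem-nor-ue} applies and gives $\|f_z\|_{p,a} \lesssim K_\Om(z,z)^{1 - 1/p} d_\Om(z)^{a/p}$. The whole problem then reduces to the pointwise estimate $M^{\alpha,\beta}_{\Om, p, q, a}(z) \lesssim \|T_{\alpha,\beta}\|_{L^p_a(\Om) \to L^q_a(\Om)}$ for all such $z$: since $M^{\alpha,\beta}_{\Om, p, q, a}$ is continuous on $\Om$, it is automatically bounded on compact subsets, so a bound on a boundary neighbourhood yields $M^{\alpha,\beta}_{\Om, p, q, a} \in L^\infty(\Om)$.

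The first key step is a lower bound for $T_{\alpha,\beta}f_z$ at its centre. Because $\psi = K_\Om(\cdot,\cdot)^{-\alpha} d_\Om^{\beta} \ge 0$ and $K_\Om(z,w) = \overline{K_\Om(w,z)}$, evaluation at $\zeta = z$ produces the positive quantity
\[
T_{\alpha,\beta} f_z(z) = \int_\Om |K_\Om(z,w)|^2 \psi(w)\, dV(w) \ge \int_{\Phi_z^{-1}(P_\lambda(z'))} |K_\Om(z,w)|^2 \psi(w)\, dV(w).
\]
On the $\mathcal B$-polydisc (Definition~\ref{df-BP}) the conditions $K_{\Om'}(w',w') \le C K_{\Om'}(z',z')$ and $d_{\Om'}(w') \approx d_{\Om'}(z')$ force $\psi(w) \gtrsim \psi(z)$ there (this is where $\alpha, \beta \ge 0$ is used), so I can pull out $\psi(z)$ and estimate $\int_{P} |K_\Om(z,w)|^2\, dV(w) \gtrsim K_\Om(z,z)$ by the sub-mean value property of the plurisubharmonic function $|K_\Om(z,\cdot)|^2$ over the polydisc, exactly as in the proof of Lemma~\ref{lem-nor-le}, together with $\mathrm{Vol}(P_\lambda(z')) \approx K_\Om(z,z)^{-1}$. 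This gives $T_{\alpha,\beta} f_z(z) \gtrsim K_\Om(z,z)\, \psi(z)$.

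The second step upgrades this to an $L^q_a$ lower bound. The function $F := T_{\alpha,\beta} f_z$ is holomorphic on $\Om$, being the Bergman projection of $\psi f_z \in L^2(\Om)$, so $|F|^q$ is subharmonic and the sub-mean value property over the same polydisc, combined with $d_\Om \approx d_\Om(z)$ on $P$, yields
\[
\|T_{\alpha,\beta} f_z\|_{q,a}^q \ge \int_{P} |F|^q\, dV_a \gtrsim d_\Om(z)^a\, \mathrm{Vol}(P)\, |F(z)|^q \gtrsim d_\Om(z)^a K_\Om(z,z)^{-1} \big(K_\Om(z,z)\psi(z)\big)^q,
\]
hence $\|T_{\alpha,\beta} f_z\|_{q,a} \gtrsim K_\Om(z,z)^{1 - 1/q} d_\Om(z)^{a/q}\psi(z)$. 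Feeding this into the boundedness inequality $\|T_{\alpha,\beta} f_z\|_{q,a} \le \|T_{\alpha,\beta}\|\, \|f_z\|_{p,a}$ and the Step~1 bound on $\|f_z\|_{p,a}$, the powers of $K_\Om(z,z)$ and $d_\Om(z)$ recombine into exactly
\[
\psi(z)\, K_\Om(z,z)^{\frac1p - \frac1q} d_\Om(z)^{a(\frac1q - \frac1p)} = M^{\alpha,\beta}_{\Om, p, q, a}(z) \lesssim \|T_{\alpha,\beta}\|,
\]
which is the desired pointwise estimate.

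\textit{The main obstacle is the lower-bound machinery of the two sub-mean-value steps:} one must transfer, via the biholomorphism $\Phi_z$ and the $\mathcal B$-polydisc condition, the comparability of $K_\Om$, of $d_\Om$, and of the kernel mass onto the explicit polydisc $P_\lambda(z')$, and confirm that the holomorphic function $F$ genuinely inherits the large central value $F(z) \gtrsim K_\Om(z,z)\psi(z)$. The hypotheses on $a$ are precisely what keep these estimates valid: $a < 2(p-1)$ is exactly the range in which Lemma~\ref{lem-nor-ue} places $f_z$ in $L^p_a(\Om)$ with the stated norm bound, while the remaining restriction $-1 < a < q-1$ keeps the weighted sub-mean value and volume estimates over the polydisc (and the finiteness of the relevant weighted integrals) under control. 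Away from $\partial\Om$ there is nothing to prove, since $M^{\alpha,\beta}_{\Om, p, q, a}$ is continuous, so the estimate on a boundary neighbourhood completes the argument.
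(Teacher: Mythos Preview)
Your proposal is correct and shares the paper's core idea of testing $T_{\alpha,\beta}$ on the kernel functions $f_z = K_\Om(\cdot,z)$ together with the $\mathcal B$-polydisc sub-mean-value estimate. The genuine difference lies in how the operator bound is converted into the pointwise control of $M^{\alpha,\beta}_{\Om,p,q,a}$. The paper first rewrites the scalar quantity $\int_\Om |K_\Om(w,z)|^2\psi(w)\,dV(w)$ as the (unweighted) pairing $\int_\Om (T_{\alpha,\beta}K_\Om(\cdot,z))(\xi)\,K_\Om(z,\xi)\,dV(\xi)$ via the reproducing property and Fubini, then applies H\"older's inequality in the weighted pairing $L^q_a \times L^{q'}_{-aq'/q}$; this forces a second application of Lemma~\ref{lem-nor-ue} to $\|K_\Om(\cdot,z)\|_{q',-aq'/q}$, and it is precisely here that the hypothesis $a<q-1$ enters. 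You instead bypass the pairing entirely by applying a second sub-mean-value step to the holomorphic function $F=T_{\alpha,\beta}f_z$ over the same polydisc, obtaining the lower bound $\|F\|_{q,a}\gtrsim K_\Om(z,z)^{1-1/q}d_\Om(z)^{a/q}\psi(z)$ directly. This is arguably more elementary: it uses Lemma~\ref{lem-nor-ue} only once (for $(p,a)$), and your Step~2 needs no restriction on $a$ beyond $a>-1$, since on $P_\lambda(z')$ one has $d_\Om\approx d_\Om(z)$ and the Jacobian of $\Phi_z$ is uniformly bounded. Thus your remark that $a<q-1$ ``keeps the weighted sub-mean value \ldots\ under control'' is overly cautious---your argument does not actually use that part of the hypothesis, which is a small bonus over the paper's route.
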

 \begin{proof} Similarly to \cite[Theorem~3.1]{KLT18}, the proof is based on upper and lower estimates of 
\begin{equation} \label{eq-sK}
\int_\Om |K_{\Om}(w,z)|^2K_{\Om}(w,w)^{-\alpha}d_{\Om}(w)^{\beta}dV(w), z \in \Om.
\end{equation}
 	
\textbf{Upper estimate.} Since  $K_{\Om}(w,z)$ is holomorphic in $w\in \Om$, 
 	$$
 	K_{\Om}(w,z)=P(K_{\Om}(\cdot,z))(w)=\int_{\Om}K_{\Om}(w,\xi)K_{\Om}(\xi,z)dV(\xi),
 	$$
   hence 
 	$$
 	\overline{K_{\Om}(w,z)}=\overline{\int_{\Om}K_{\Om}(w,\xi)K_{\Om}(\xi,z)dV(\xi)}=\int_{\Om}K_{\Om}(\xi,w)K_{\Om}(z,\xi)dV(\xi).
 	$$  
Using this and Fubini's theorem, we get
 		\begin{align}\label{eqn:Ksquare1}
 		\nonumber &\intop_{\Omega}\left|K_{\Om}\left(w,z\right)\right|^{2}K_{\Om}(w,w)^{-\alpha}d_{\Om}(w)^{\beta}dV(w) \\
		\nonumber = & \intop_{\Omega}K_{\Om}(w,z)K_{\Om}(w,w)^{-\alpha}d_{\Om}(w)^{\beta}\left(\int_{\Om}K_{\Om}(\xi,w)K_{\Om}(z,\xi)dV(\xi)\right)dV(w)\\
 		\nonumber = & \intop_{\Om}\left(\intop_{\Om}K_{\Om}(\xi,w)K_{\Om}(w,w)^{-\alpha}d_{\Om}(w)^{\beta}K_{\Om}(w,z)dV(w)\right)K_{\Om}(z,\xi)V(\xi)\\
 		 =&\intop_{\Om}\Big(T_{\alpha, \beta} (K_{\Om}(\cdot,z))(\xi) \Big)K_{\Om}(z,\xi)dV(\xi).
		\end{align}
Thus, by H\"older's inequality and the continuity of $T_{\alpha, \beta}: L_a^p(\Om) \to L_a^q(\Om)$, 
\begin{align}\label{eq-T}
\nonumber &\intop_{\Omega}\left|K_{\Om}\left(w,z\right)\right|^{2}K_{\Om}(w,w)^{-\alpha}d_{\Om}(w)^{\beta}dV(w)\\
\nonumber =&\intop_{\Om}\Big(T_{\alpha, \beta} (K_{\Om}(\cdot,z))(\xi) \Big) d_{\Om}(\xi)^{\frac{a}{q}} K_{\Om}(z,\xi) d_{\Om}(\xi)^{-\frac{a}{q}}dV(\xi)\\
	\nonumber	\leq & \left( \intop_{\Om}\Big|T_{\alpha, \beta} (K_{\Om}(\cdot,z))(\xi) \Big|^q d_{\Om}(\xi)^{a} dV(\xi) \right)^{\frac{1}{q}} \left( \intop_{\Om} |K_{\Om}(z,\xi)|^{q'} d_{\Om}(\xi)^{-\frac{aq'}{q}}dV(\xi)\right)^{\frac{1}{q'}}\\
 		= & \; \no{T_{\alpha, \beta}(K_{\Om}(\cdot,z))}_{q, a}\no{K_{\Om}(\cdot,z)}_{q', -\frac{aq'}{q}} 
 \leq  \; C\no{K_{\Om}(\cdot,z)}_{p, a}\no{K_{\Om}(\cdot,z)}_{q', -\frac{aq'}{q}}
\end{align}
Moreover, using Lemma~\ref{lem-nor-ue} twice for $(p,a)$ with $p > 1, - 1 < a < 2(p - 1)$ and for $(q',-\frac{aq'}{q})$ with $q' > 1, - 1 < -\frac{aq'}{q} < 2(q' - 1)$ by the hypothesis on $p, q, a$, we get that, for every $z \in \Om$,
	\begin{equation*}\label{eqn:Ksquare2}\begin{split}
 	\intop_{\Omega}\left|K_{\Om}\left(w,z\right)\right|^{2}K_{\Om}(w,w)^{-\alpha}d_{\Om}(w)^{\beta}dV(w) 	& \lesssim \; K_{\Om}(z,z)^{1 - \frac{1}{p}}d_{\Om}(z)^{\frac{a}{p}} K_{\Om}(z,z)^{1 - \frac{1}{q'}}d_{\Om}(z)^{-\frac{a}{q}} \\
	& =  \; CK_{\Om}(z,z)^{1-\frac{1}{p}+\frac{1}{q}}d_{\Om}(z)^{-a \left(\frac{1}{q} - \frac{1}{p}\right)}.
 		\end{split}	
 		\end{equation*}

\textbf{Lower estimate.} Similarly to the proof of Lemma \ref{lem-nor-le}, for every $z \in \Om$ near the boundary $\partial \Om$, by the invariant formula, we have 
$$
K_{\Om}(z,w) = \det J_\C\Phi_z(z) K_{\Phi_z(\Om)}(\Phi_z(z), \Phi_z(w)) \overline{\det J_\C\Phi_z(w)},
$$
and
$$
K_{\Om}(w,w) = \det J_\C\Phi_z(w) K_{\Phi_z(\Om)}(\Phi_z(w), \Phi_z(w)) \overline{\det J_\C\Phi_z(w)}.
$$
From this and the fact that $ C^{-1} \leq |\det J_\C\Phi_z(w)|\leq C$ for all $w$ in a neighborhood  of $z$, we get
\begin{align}\label{eq-le-K}
\nonumber & \int_\Om |K_{\Om}(w,z)|^2K_{\Om}(w,w)^{-\alpha}d_{\Om}(w)^{\beta}dV(w) \\
\nonumber = & \int_{\Om'} |\det J_{\C}\Phi_z^{-1}(z')|^{-2}  |K_{\Om'}(w',z')|^2 |\det J_{\C}\Phi_z^{-1}(w')|^{-2} \\
\nonumber & \qquad \qquad \times |\det J_\C\Phi_z^{-1}(w')|^{2\alpha} K_{\Om'}(w',w')^{-\alpha} d_{\Om}(\Phi_z^{-1}(w'))^{\beta} |\det J_{\C}\Phi_z^{-1}(w')| dV(w') \\
 \nonumber \ge & \int_{P_{\lambda}(z')}  |K_{\Om'}(w',z')|^2 K_{\Om'}(w',w')^{-\alpha} d_{\Om}(\Phi_z^{-1}(w'))^{\beta}  |\det J_{\C}\Phi_z(z)|^2 |\det J_{\C}\Phi_z^{-1}(w')|^{2\alpha - 1} dV(w') \\
 \nonumber \gtrsim &\;  \int_{P_{\lambda}(z')} |K_{\Om'}(w',z')|^2K_{\Om'}(w',w')^{-\alpha}d_{\Om'}(w')^{\beta}dV(w')\\
 \nonumber \gtrsim &\;  K_{\Om'}(z',z')^{-\alpha}d_{\Om'}(z')^{\beta}\int_{P_\lambda(z')} |K_{\Om'}(w',z')|^2dV(w')\\
\nonumber \gtrsim &\;  K_{\Om'}(z',z')^{-\alpha}d_{\Om'}(z')^{\beta}K_{\Om'}(z',z')^{2}\T{Vol}(P_\lambda(z'))\\
\gtrsim &\;  K_{\Om'}(z',z')^{1-\alpha}d_{\Om'}(z')^{\beta} \gtrsim K_{\Om}(z,z)^{1-\alpha}d_{\Om}(z)^{\beta}.
\end{align}

Consequently, from the upper and lower estimates of \eqref{eq-sK}, we get
 \begin{equation*} 
  	K_{\Om}(z,z)^{1-\frac{1}{p}+\frac{1}{q}}d_{\Om}(z)^{-a \left(\frac{1}{q} - \frac{1}{p}\right)} \gtrsim K_{\Om}(z,z)^{1-\alpha}d_{\Om}(z)^{\beta},
\end{equation*}
  and hence 
	$$
\sup_{z \in \Om} K_{\Om}(z,z)^{-\alpha + \left(\frac{1}{p} - \frac{1}{q}\right)}d_{\Om}(z)^{\beta + a\left(\frac{1}{q} - \frac{1}{p}\right)} < \infty.
$$
The proof is completed.
\end{proof}

\section{Essential norm}\label{sec-ess-nor}

For a bounded operator $T$ acting from a Banach space $X$ to a Banach space $Y$, the essential norm of $T$ is defined as
$$
\|T\|_{e, X \to Y}: = \inf\{\|T-K\|, K \in \mathcal K(X,Y)\},
$$
where $\mathcal K(X,Y)$ is the set of all compact operators from $X$ to $Y$. Clearly, $T$ is compact if and only if $\|T\|_{e,X \to Y} = 0$. Note that Su\'arez \cite{S07} and Mitkovski, Su\'arez and Wick \cite{MSW13} investigated the essential norm of operators in the Toeplitz algebra on the unit ball in terms of the Berezin transform. Later, \u{C}u\u{c}kovi\'c and \c{S}ahuto\u{g}lu \cite{CS13} established estimates for $\|T_{\psi}\|_{e, L^2(\Om) \to L^2(\Om)}$ with $\psi \in C(\overline{\Om})$ on smooth bounded pseudoconvex domains $\Om$ in $\C^n$, on which the $\overline{\partial}$-Neumann operator is compact.
In this section we give an upper estimate for $\|T_{\psi}\|_{e,  L^p_a(\Om) \to L^q_a(\Om)}$ and a lower one for $\|T_{\alpha, \beta}\|_{e,  L^p_a(\Om) \to L^q_a(\Om)}$.

To do this, we recall that for $1 < p < \infty$, $L^p_a(\Om)$ is a reflexive Banach space and the dual space of $L^p_a(\Om)$ can be identified with the space $L^{p'}_a(\Om)$ under the duality pairing
$$
\langle f, g \rangle_a = \int_{\Om} f(w) \overline{g(w)}dV_a(w) \text{ with } f \in L^p_a(\Om) \text{ and } g \in L^{p'}_a(\Om).
$$

\begin{proposition}\label{prop-com1}
Let $1 < p, q < \infty$, $a > -1$ and $\Om$ be a bounded domain in $\C^n$ such that the Bergman kernel  $K_{\Om}$ is continuous up to the off-diagonal boundary. Suppose that $\psi:\Om\to \C$ is a measurable function in $L^{\infty}(\Om)$ such that $\psi(w) = 0$ almost everywhere on $\Om \setminus Q$ for some compact subset $Q$ of $\Om$. Then the Toeplitz operator $T_{\psi}: L_a^p(\Om) \to L_a^q(\Om)$ is compact.
\end{proposition}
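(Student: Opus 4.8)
The plan is to show that $T_\psi$ factors as a composition in which one factor is compact, exploiting the fact that $\psi$ is supported in a fixed compact set $Q \Subset \Om$. The key structural observation is that for $w \in Q$ and $z$ ranging over all of $\Om$, the integration variable $w$ stays uniformly bounded away from $\partial\Om$, so the Bergman kernel $K_\Om(z,w)$ is evaluated only on the set $\overline\Om \times Q$, which is contained in $(\overline\Om \times \overline\Om) \setminus (\partial\Om \times \partial\Om)$. By the continuity hypothesis (i) in Definition~\ref{df-bsharp}, $K_\Om$ is therefore bounded on this set, say $|K_\Om(z,w)| \le C_0$ for all $(z,w) \in \overline\Om \times Q$. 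Combined with $\psi \in L^\infty$ and $\supp\psi \subset Q$, the kernel $K_\Om(z,w)\psi(w)$ of $T_\psi$ is a bounded, essentially continuous kernel living on a compact-in-$w$ domain; this is exactly the situation where Hilbert--Schmidt/Ascoli-type arguments yield compactness.

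Concretely, I would first verify boundedness of $T_\psi : L^p_a(\Om) \to L^q_a(\Om)$ directly (not via Theorem~\ref{thm:main1}, whose hypotheses on $a$ differ), using H\"older's inequality: for $f \in L^p_a(\Om)$,
\begin{align*}
|T_\psi f(z)| &\le C_0 \|\psi\|_\infty \int_Q |f(w)|\, dV(w) \\
&\le C_0 \|\psi\|_\infty \left(\int_Q |f(w)|^p d_\Om(w)^a dV(w)\right)^{\frac1p} \left(\int_Q d_\Om(w)^{-a p'/p} dV(w)\right)^{\frac1{p'}},
\end{align*}
where the last factor is finite because $Q$ is compact in $\Om$, so $d_\Om$ is bounded below on $Q$. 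Since the right-hand side bounds $|T_\psi f(z)|$ by a constant multiple of $\|f\|_{p,a}$ uniformly in $z$, and $\Om$ has finite $V_a$-measure, $\|T_\psi f\|_{q,a} \lesssim \|f\|_{p,a}$.

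To upgrade boundedness to compactness, I would take a bounded sequence $(f_k) \subset L^p_a(\Om)$ with $\|f_k\|_{p,a} \le 1$ and show $(T_\psi f_k)$ has a convergent subsequence in $L^q_a(\Om)$. By reflexivity of $L^p_a(\Om)$, pass to a subsequence with $f_k \rightharpoonup f$ weakly. For each fixed $z \in \Om$, the functional $g \mapsto \int_\Om K_\Om(z,w)\psi(w) g(w)\, dV(w)$ is of the form $\langle g, \overline{K_\Om(z,\cdot)\psi}\, d_\Om^{-a}\rangle_a$ and is continuous on $L^p_a(\Om)$ (its representing function lies in $L^{p'}_a(\Om)$, since $K_\Om(z,\cdot)\psi$ is bounded and supported in $Q$), so $T_\psi f_k(z) \to T_\psi f(z)$ pointwise. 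The family $(T_\psi f_k)$ is uniformly bounded by the estimate above, and I expect it to be equicontinuous on compact subsets of $\Om$ via the modulus of continuity of $K_\Om(\cdot,w)$ on $\overline\Om \times Q$; an Arzel\`a--Ascoli argument then gives locally uniform convergence, and since $T_\psi f_k$ is uniformly bounded on all of $\Om$ with $V_a(\Om) < \infty$, dominated convergence promotes this to $L^q_a(\Om)$-convergence $T_\psi f_k \to T_\psi f$.

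The main obstacle I anticipate is the transition from pointwise (or locally uniform) convergence to norm convergence in $L^q_a(\Om)$: one must rule out mass escaping toward $\partial\Om$. This is controlled precisely because the uniform bound $|T_\psi f_k(z)| \le C_0\|\psi\|_\infty \cdot C(Q,p,a)$ holds for \emph{all} $z \in \Om$ with a constant independent of $k$ and $z$, so the dominating function is the constant, which is $V_a$-integrable as $V_a(\Om) < \infty$. Thus the dominated convergence theorem applies without any boundary subtlety, and the equicontinuity needed for Arzel\`a--Ascoli follows from uniform continuity of $K_\Om$ on the compact set $\overline\Om \times Q$. Alternatively, one could avoid Arzel\`a--Ascoli entirely by noting the representing functions $\overline{K_\Om(z,\cdot)\psi}\,d_\Om^{-a}$ depend continuously on $z$ in $L^{p'}_a$ and invoke compactness of the solid kernel directly; I would present whichever is cleaner, but the weak-convergence-plus-dominated-convergence route seems most economical.
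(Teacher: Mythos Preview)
Your proposal is correct and, once you settle on the ``weak-convergence-plus-dominated-convergence route'' at the end, it is essentially identical to the paper's proof: both establish the uniform pointwise bound \eqref{eq-Tz} via H\"older, obtain pointwise convergence of $T_\psi f_k(z)$ by observing that $w \mapsto \overline{K_\Om(z,w)\psi(w)}\,d_\Om(w)^{-a}$ lies in $L^{p'}_a(\Om)$, and then apply dominated convergence with the constant dominating function. The Arzel\`a--Ascoli detour you sketch is unnecessary (as you yourself note), and the paper streamlines slightly by testing against weakly \emph{null} sequences rather than passing to a weakly convergent subsequence, but the substance is the same.
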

\begin{proof}
Firstly, we show that $T_{\psi}$ maps $L_a^p(\Om)$ to $L_a^q(\Om)$ continuously. Indeed, since 
$$
K_{\Om}\in C\left(\left(\overline{\Omega}\times\overline{\Omega}\right)\setminus \left( \partial\Omega \times \partial\Omega \right) \right),
$$
there exists a positive constant $C$ dependent on $Q$ and $\Om$ such that 
$ \left|K_{\Om}\left(z,w\right)\right|\le C$ for all $(z,w)\in \overline{\Om} \times Q$. 
Then, for every $f \in L_a^p(\Om)$ and $z \in \Om$, using H\"older inequality, we get
\begin{align}\label{eq-Tz}
  \nonumber |T_{\psi}(f)(z)| &= \left| \int_{\Om} K_{\Om}(z,w) \psi(w)f(w)dV(w) \right| \\
 \nonumber & \leq \int_{Q} |K_{\Om}(z,w)| |\psi(w)| |f(w)|dV(w) \leq C \|\psi\|_{\infty} \int_Q |f(w)|dV(w) \\
 \nonumber & \leq C \|\psi\|_{\infty} \left( \int_Q |f(w)|dV_a(w) \right)^{\frac{1}{p}} \left( \int_Q d_{\Om}(w)^{-\frac{ap'}{p}} dV(w) \right)^{\frac{1}{p'}} \\ 
& \leq C \|\psi\|_{\infty} \|f\|_{p,a} \left( \int_Q d_{\Om}(w)^{-\frac{ap'}{p}} dV(w) \right)^{\frac{1}{p'}} < \infty.
\end{align}
Thus, 
$$
\|T_{\psi}f\|_{q,a} \leq C \|\psi\|_{\infty} \|1\|_{q,a} \|f\|_{p,a} \left( \int_Q d_{\Om}(w)^{-\frac{ap'}{p}} dV(w) \right)^{\frac{1}{p'}} \text{ for every } f \in L^p_a(\Om).
$$

Now, we prove that $T_{\psi}: L^p_a(\Om) \to L^q_a(\Om)$ is compact. Since $L^p_a(\Om)$ is reflexive, it is sufficient to prove that for every weakly convergent to $0$ sequence $(f_m)_m$ in $L^p_a(\Om)$, the sequence $T_{\psi}f_m$ converges to $0$ in $L^q_a(\Om)$.
Using \eqref{eq-Tz} for each function $f_m, m \in \mathbb N$, we obtain
\begin{align}\label{eq-bd}
 |T_{\psi}(f_m)(z)|  \leq C \|\psi\|_{\infty} \left( \sup_m \|f_m\|_{p,a} \right) \left( \int_Q d_{\Om}(w)^{-\frac{ap'}{p}} dV(w) \right)^{\frac{1}{p'}} < \infty 
\end{align}
 for every $z \in \Om$, where $\sup_m \|f_m\|_{p,a} < \infty$.
Moreover, for each $z \in \Om$ fixed, it is easy to see that the function 
$$
g_z(w) = K_{\Om}(w,z)\overline{\psi(w)}d_{\Om}(w)^{-a}, w \in \Om,
$$
belongs to $L^{p'}_a(\Om)$. Then, for each $z \in \Om$ fixed,
\begin{align}\label{eq-0}
\nonumber |T_{\psi}(f_m)(z)| &= \left| \int_{\Om} K_{\Om}(z,w) \psi(w)f_m(w)dV(w) \right| \\ 
& = \left| \int_{\Om} f_m(w)\overline{g(w)}dV_a(w) \right| = |\langle f_m, g \rangle_a| \to 0 \text{ as } m \to \infty.
\end{align}
Using \eqref{eq-bd}, \eqref{eq-0} and Lebesgue's dominated convergence theorem, we get
$$
\|T_{\psi}f_m\|_{q,a} = \left(\int_{\Om} |T_{\psi}f_m(z)|^q dV_a(z) \right)^{\frac{1}{q}} \to 0 \text{ as } m \to \infty.
$$
The proof is completed.
\end{proof}

 For each subset $Q$ of $\Om$ and measurable function $\psi: \Om \to \C$, we define 
 $$
\psi_{Q}(w): = \textbf{1}_{Q}(w) \psi(w), w \in \Om,
 $$
 and put
$$
\|M^{\psi}_{Q, p, q, a}\|_{\infty}: =\|M^{\psi_Q}_{\Om, p, q, a}\|_{\infty} = \esssup_{w \in Q} |\psi(w)|K_{\Om}(w,w)^{\frac{1}{p} - \frac{1}{q}}d_{\Om}(z)^{a\left(\frac{1}{q} - \frac{1}{p}\right)}. $$
It is easy to see that for each exhaustion by compact subsets $(Q_m)_m$ of $\Om$ and measurable function $\psi: \Om \to \C$, the limit $\displaystyle \lim_{m \to \infty}\|M^{\psi}_{\Om \setminus Q_m, p, q, a}\|_{\infty}$ exists and does not depend on $(Q_m)_m$. In particular, 
$$
\lim_{m \to \infty}\|M^{\alpha, \beta}_{\Om \setminus Q_m, p, q, a}\|_{\infty} = \limsup_{w \to \partial \Om} K_{\Om}(w,w)^{-\alpha+ \left(\frac{1}{p} - \frac{1}{q}\right)}d_{\Om}(z)^{\beta + a\left(\frac{1}{q} - \frac{1}{p}\right)}.
$$

\begin{theorem}\label{thm:main1:com}
Let $1 < p \leq q < \infty$, $-1 < a < \frac{q}{p'}$, and $\Om$ be a bounded domain in $\C^n$ such that the Bergman kernel $K_{\Om}$ is of sharp $\B$-type. Suppose that $\psi:\Om\to \C$ is a measurable function such that $M^{\psi}_{\Om, p, q, a}(w) \in L^{\infty}(\Om)$. Then 
\begin{equation*}\label{essnorm}
\|T_{\psi}\|_{e, L_a^p(\Om) \to L_a^q(\Om)} \leq C \left(\dfrac{p' + q}{(1+a)\left(1 - \frac{ap'}{q}\right)}\right)^{\frac{1}{p'}+\frac{1}{q}} \lim_{m \to \infty}\|M^{\psi}_{\Om \setminus Q_m, p, q, a}\|_{\infty},
\end{equation*}
for some constant $C$ dependent only on $\Om$. 

In particular, if for every $\varepsilon > 0$ there exists a compact subset $Q = Q(\varepsilon)$ of $\Om$ such that 
$M^{\psi}_{\Om, p, q, a}(w) < \varepsilon$ for almost $w  \in \Om \setminus Q$,
then the Toeplitz operator $T_{\psi}: L^p_{a}(\Om)\to L^q_{a}(\Om)$ is compact.
\end{theorem}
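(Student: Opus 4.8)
The plan is to obtain the upper bound by approximating $T_\psi$ with a compact operator built from the restriction of $\psi$ to a large compact set, and then to control the remainder by the boundedness estimate of Theorem~\ref{thm:main1}. Fix an exhaustion $(Q_m)_m$ of $\Om$ by compact subsets and, for each $m$, split the symbol as $\psi = \psi_{Q_m} + \psi_{\Om\setminus Q_m}$, where $\psi_{Q_m}(w) = \textbf{1}_{Q_m}(w)\psi(w)$. Since $T$ is linear in its symbol, this gives the operator decomposition $T_\psi = T_{\psi_{Q_m}} + T_{\psi_{\Om\setminus Q_m}}$.

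First I would verify that $T_{\psi_{Q_m}}$ is compact, so that it may be discarded in the essential-norm infimum. The symbol $\psi_{Q_m}$ vanishes off the compact set $Q_m$, and it lies in $L^{\infty}(\Om)$: on $Q_m$ the functions $K_{\Om}(w,w)$ and $d_{\Om}(w)$ are continuous and bounded away from $0$ and $\infty$, so $|\psi(w)|$ is comparable there to $M^{\psi}_{\Om,p,q,a}(w)\in L^{\infty}(\Om)$. Hence Proposition~\ref{prop-com1} applies and $T_{\psi_{Q_m}}\colon L_a^p(\Om)\to L_a^q(\Om)$ is compact. By the definition of the essential norm this yields
\[
\|T_\psi\|_{e,\,L_a^p(\Om)\to L_a^q(\Om)} \le \|T_\psi - T_{\psi_{Q_m}}\| = \|T_{\psi_{\Om\setminus Q_m}}\|_{L_a^p(\Om)\to L_a^q(\Om)}.
\]

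Next I would estimate the remainder. Since $|\psi_{\Om\setminus Q_m}|\le|\psi|$ pointwise, we have $M^{\psi_{\Om\setminus Q_m}}_{\Om,p,q,a}\le M^{\psi}_{\Om,p,q,a}\in L^{\infty}(\Om)$, so Theorem~\ref{thm:main1} applies to the symbol $\psi_{\Om\setminus Q_m}$ and gives
\[
\|T_{\psi_{\Om\setminus Q_m}}\|_{L_a^p(\Om)\to L_a^q(\Om)} \le C\left(\frac{p'+q}{(1+a)\left(1-\frac{ap'}{q}\right)}\right)^{\frac{1}{p'}+\frac{1}{q}}\|M^{\psi_{\Om\setminus Q_m}}_{\Om,p,q,a}\|_{\infty},
\]
where $\|M^{\psi_{\Om\setminus Q_m}}_{\Om,p,q,a}\|_{\infty}=\|M^{\psi}_{\Om\setminus Q_m,p,q,a}\|_{\infty}$ by definition. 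Combining this with the previous display and letting $m\to\infty$ (the limit exists and is independent of the exhaustion, as noted before the statement) gives the asserted inequality. For the final claim, the hypothesis forces $\lim_{m\to\infty}\|M^{\psi}_{\Om\setminus Q_m,p,q,a}\|_{\infty}=0$, so the essential norm vanishes and $T_\psi$ is compact.

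I do not expect a serious obstacle here: the argument is a standard localization once Proposition~\ref{prop-com1} and Theorem~\ref{thm:main1} are available. The only point requiring genuine care is checking that the cut-off symbol $\psi_{Q_m}$ is really in $L^{\infty}(\Om)$, which is where one uses the comparability of $|\psi|$ with $M^{\psi}_{\Om,p,q,a}$ on compact subsets of $\Om$ (the continuity and positivity of $K_{\Om}(\cdot,\cdot)$ on the interior together with the positivity of $d_{\Om}$), so that the hypothesis of Proposition~\ref{prop-com1} is genuinely met.
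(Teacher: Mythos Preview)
Your proposal is correct and follows essentially the same approach as the paper's proof: split $\psi=\psi_{Q_m}+\psi_{\Om\setminus Q_m}$, invoke Proposition~\ref{prop-com1} to see $T_{\psi_{Q_m}}$ is compact, then apply Theorem~\ref{thm:main1} to the remainder and let $m\to\infty$. Your version is in fact slightly more careful than the paper's in explicitly verifying that $\psi_{Q_m}\in L^{\infty}(\Om)$ before citing Proposition~\ref{prop-com1}.
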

\begin{proof}
By Theorem \ref{thm:main1}, $T_{\psi}: L^p_{a}(\Om)\to L^q_{a}(\Om)$ is bounded.

Let $(Q_m)_m$ be an arbitrary exhaustion by compact sets of $\Om$. By Proposition \ref{prop-com1}, all operators $T_{\psi_{Q_m}}, m \geq 1,$ are compact from $L_a^p(\Om)$ to $L_a^q(\Om)$.

On the other hand,  for every $m \geq 1$ and $f \in L_a^p(\Om)$,
\begin{align*}
T_{\psi}f(z) - T_{\psi_{Q_m}}f(z)&  = \int_{\Om} K_{\Om}(z,w) (\psi(w) - \psi_{Q_m}(w)) f(w)dV(w)\\
& = \int_{\Om} K_{\Om}(z,w) \psi_{\Om\setminus Q_m}(w) f(w)dV(w) = T_{\psi_{\Om\setminus Q_m}}f(z).
\end{align*}

Moreover, it is clear that $M^{\psi_{\Om\setminus Q_m}}_{\Om, p, q, a}(w) \in L^{\infty}(\Om)$. Thus, by Theorem \ref{thm:main1}, for every $m \in \mathbb N$, we get
\begin{align*}
\|T_{\psi}\|_{e, L_a^p(\Om) \to L_a^q(\Om)} & \leq \|T_{\psi} - T_{\psi_{Q_m}}\|_{ L_a^p(\Om) \to L_a^q(\Om)} = \|T_{\psi_{\Om\setminus Q_m}}\|_{ L_a^p(\Om) \to L_a^q(\Om)} \\
& \leq C \left(\dfrac{p' + q}{(1+a)\left(1 - \frac{ap'}{q}\right)}\right)^{\frac{1}{p'}+\frac{1}{q}} \|M^{\psi}_{\Om \setminus Q_m, p, q, a}\|_{\infty}.
\end{align*}
From this the desired estimate follows.
\end{proof}

Next, for each $z \in \Om$, we put 
$$
k_{\Om,a}(w,z) := \dfrac{K_{\Om}(w,z)}{\|K_{\Om}(\cdot,z)\|_{p,a}}, w \in \Om.
$$ 

\begin{lemma}\label{lem-wc-k}
Let $p > 1$, $-1 < a < 2(p - 1)$, and $\Om$ be a bounded smooth pseudoconvex domain in $\C^n$ such that the Bergman kernel is of sharp $\B$-type and $\B$-polydisc condition is satisfied. Then $k_{\Om,a}(\cdot,z)$ converges weakly to $0$ in $L^p_a(\Om)$ as $z \to \partial \Om$. 
\end{lemma}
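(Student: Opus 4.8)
The plan is to prove weak convergence by verifying the two standard ingredients: uniform boundedness of the normalized kernels $k_{\Om,a}(\cdot,z)$ in $L^p_a(\Om)$, and convergence of the dual pairings $\langle k_{\Om,a}(\cdot,z), g\rangle_a$ to $0$ for $g$ ranging over a dense subset of the dual space $L^{p'}_a(\Om)$. Since $L^p_a(\Om)$ is reflexive for $p>1$, these two conditions together yield weak convergence to $0$ as $z\to\partial\Om$.

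First I would check boundedness. By definition $\|k_{\Om,a}(\cdot,z)\|_{p,a} = 1$ for every $z\in\Om$, so the family is trivially uniformly bounded in norm; this is where the normalization and the hypothesis $-1 < a < 2(p-1)$ enter, since Lemma~\ref{lem-nor-ue} guarantees the denominator $\|K_{\Om}(\cdot,z)\|_{p,a}$ is finite (and Lemma~\ref{lem-nor-le} ensures it is nonzero near the boundary, so the normalization is well-defined).

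Next I would test against a dense class. The natural dense subset is the functions $g$ that are compactly supported in $\Om$ (or bounded with compact support), since these are dense in $L^{p'}_a(\Om)$. For such a $g$ supported in a fixed compact set $Q\Subset\Om$, the pairing is
$$
\langle k_{\Om,a}(\cdot,z), g\rangle_a = \frac{1}{\|K_{\Om}(\cdot,z)\|_{p,a}}\int_{Q} K_{\Om}(w,z)\overline{g(w)}\,dV(w).
$$
As $z\to\partial\Om$, the denominator blows up: using the lower bound from Lemma~\ref{lem-nor-le} together with the inequality $d_{\Om}(z)^{-2}\le K_{\Om}(z,z)$ from \eqref{eq-ine-dk}, one sees $\|K_{\Om}(\cdot,z)\|_{p,a}\gtrsim K_{\Om}(z,z)^{1-\frac1p}d_{\Om}(z)^{a/p}\to\infty$. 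Meanwhile the numerator integral stays bounded: for $z$ near $\partial\Om$ and $w$ confined to the fixed compact $Q$, the pair $(w,z)$ lies off the boundary diagonal, so the continuity of $K_{\Om}$ up to the off-diagonal boundary (part (i) of sharp $\B$-type) gives a uniform bound $|K_{\Om}(w,z)|\le C_Q$. Hence the whole pairing tends to $0$.

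The main obstacle is justifying the passage from the dense subset to all of $L^{p'}_a(\Om)$, which is precisely where uniform boundedness is needed: given arbitrary $g\in L^{p'}_a(\Om)$ and $\eps>0$, choose a compactly supported $g_0$ with $\|g-g_0\|_{p',a}<\eps$, then estimate
$$
|\langle k_{\Om,a}(\cdot,z), g\rangle_a| \le |\langle k_{\Om,a}(\cdot,z), g_0\rangle_a| + \|k_{\Om,a}(\cdot,z)\|_{p,a}\,\|g-g_0\|_{p',a}
$$
via H\"older. The first term vanishes as $z\to\partial\Om$ by the previous step, and the second is at most $\eps$ uniformly in $z$ because $\|k_{\Om,a}(\cdot,z)\|_{p,a}=1$. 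Letting $\eps\to 0$ completes the argument. The one point requiring care is confirming density of compactly supported functions in $L^{p'}_a(\Om)$ for the weight $d_{\Om}^a$ with $a>-1$; this follows from standard truncation since the weight is locally integrable, but it should be stated explicitly.
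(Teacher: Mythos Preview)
Your proof is correct and follows essentially the same route as the paper's: both rely on the off-diagonal continuity of $K_\Om$ to control the kernel on a fixed compact set, on Lemma~\ref{lem-nor-le} together with \eqref{eq-ine-dk} and the hypothesis $a<2(p-1)$ to make $\|K_\Om(\cdot,z)\|_{p,a}\to\infty$, and on H\"older with $\|k_{\Om,a}(\cdot,z)\|_{p,a}=1$ to handle the remainder. The only cosmetic difference is that the paper, instead of invoking density of compactly supported functions in $L^{p'}_a(\Om)$, works directly with an arbitrary $g\in L^{p'}_a(\Om)$ and splits the pairing integral over an exhaustion $Q_m$ and $\Om\setminus Q_m$, letting first $z\to\partial\Om$ and then $m\to\infty$; this is exactly your density argument unpacked inline.
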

\begin{proof}
It sufficient to prove that for each $g \in L^{p'}_a(\Om)$, 
$\langle k_{\Om,a}(\cdot, z), g \rangle_a \to 0$  as $z \to \partial \Om$.
Fix an exhaustion by compact subsets $(Q_m)_m$ of $\Om$. For each $m \in \mathbb N$ fixed, since 
$
K_{\Om}\in C\left(\left(\overline{\Omega}\times\overline{\Omega}\right)\setminus \left( \partial \Om \times \partial\Omega\right) \right),
$ 
there exists a positive constant $C_m$ such that 
$\left|K_{\Om}\left(w, z\right)\right|\le C_m$ for all $(w,z)\in Q_m \times \overline{\Om}$. 

Using H\"older's inequality and Lemma \ref{lem-nor-le}, for every $z$ near the boundary $\partial \Om$, we get
\begin{align*}
\left| \langle k_{\Om,a}(\cdot, z), g \rangle_a \right| & = \left| \int_{\Om} k_{\Om, a}(w,z) \overline{g(w)}dV_a(w) \right|  \leq  \int_{\Om} |k_{\Om, a}(w,z)| |g(w)| dV_a(w) \\
& = \dfrac{1}{\|K_{\Om}(\cdot,z)\|_{p,a}} \int_{Q_m} |K_{\Om}(w,z)| |g(w)| dV_a(w) + \int_{\Om \setminus Q_m} |k_{\Om,a}(w,z)| |g(w)| dV_a(w)  \\
& \leq \dfrac{C_m}{\|K_{\Om}(\cdot,z)\|_{p,a}} \int_{Q_m} |g(w)| dV_a(w) \\
& + \left( \int_{\Om \setminus Q_m} |k_{\Om, a}(w,z)|^p dV_a(w) \right)^{\frac{1}{p}}  \left( \int_{\Om \setminus Q_m} |g(w)|^{p'} dV_a(w) \right)^{\frac{1}{p'}} \\
& \lesssim \; C_m  K_{\Om}(z,z)^{\frac{1}{p} - 1} d_{\Om}(z)^{-\frac{a}{p}}  \int_{Q_m} |g(w)| dV_a(w) +  \left( \int_{\Om \setminus Q_m} |g(w)|^{p'} dV_a(w) \right)^{\frac{1}{p'}} \\
& \lesssim C_m d_{\Om}(z)^{2 - \frac{a + 2}{p}}  \int_{Q_m} |g(w)| dV_a(w) +  \left( \int_{\Om \setminus Q_m} |g(w)|^{p'} dV_a(w) \right)^{\frac{1}{p'}},
\end{align*}
since, by \eqref{eq-ine-dk}, $d_{\Om}^{-2}(z) \leq K_{\Om}(z,z)$ for every $z \in \Om$.

In the last inequality letting first $z \to \partial \Om$, and then $m \to \infty$, we can conclude that $\langle k_{\Om}(\cdot, z), g \rangle_a \to 0$ as $z \to \partial \Om$.
\end{proof}

\begin{theorem}\label{thm:main2:com}
Let $1 < p \leq q < \infty$, $-1 < a < \min\{2(p - 1), q-1\}$, and $\Om$ be a bounded smooth pseudoconvex domain in $\mathbb C^n$ such that the Bergman kernel is of sharp $\mathcal B$-type and $\B$-polydisc condition is satisfied. Suppose that $T_{\alpha, \beta}: L_a^p(\Om) \to L_a^q(\Om)$ with $\alpha, \beta \geq 0$ is continuous.
Then
$$
\|T_{\alpha, \beta} \|_{e, L^p_a(\Om) \to L^q_a(\Om)} \gtrsim \limsup_{w \to \partial \Om} M^{\alpha, \beta}_{\Om, p, q, a}(w).
$$
\end{theorem}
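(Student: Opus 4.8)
The plan is to test the operator against the family of normalized reproducing kernels $k_{\Om,a}(\cdot,z)$, exploiting that they form a weak-null family as $z\to\partial\Om$. Since $\|k_{\Om,a}(\cdot,z)\|_{p,a}=1$ for every $z$, for any compact $S\in\mathcal K(L^p_a(\Om),L^q_a(\Om))$ one has
$$
\|T_{\alpha,\beta}-S\|_{L^p_a(\Om)\to L^q_a(\Om)}\ge \|(T_{\alpha,\beta}-S)k_{\Om,a}(\cdot,z)\|_{q,a}\ge \|T_{\alpha,\beta}k_{\Om,a}(\cdot,z)\|_{q,a}-\|Sk_{\Om,a}(\cdot,z)\|_{q,a}.
$$
I would fix a sequence $z_j\to\partial\Om$ along which $M^{\alpha,\beta}_{\Om,p,q,a}(z_j)\to\limsup_{w\to\partial\Om}M^{\alpha,\beta}_{\Om,p,q,a}(w)$; by Lemma~\ref{lem-wc-k}, $k_{\Om,a}(\cdot,z_j)\rightharpoonup 0$ in $L^p_a(\Om)$, so the compactness of $S$ forces $\|Sk_{\Om,a}(\cdot,z_j)\|_{q,a}\to 0$. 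Taking $\limsup_j$ and then the infimum over $S$ reduces the theorem to the single estimate $\|T_{\alpha,\beta}k_{\Om,a}(\cdot,z)\|_{q,a}\gtrsim M^{\alpha,\beta}_{\Om,p,q,a}(z)$ for $z$ near $\partial\Om$.

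The heart of the matter is this estimate, which I would extract from the computation already performed in Theorem~\ref{thm:main2}. Starting from the reproducing identity \eqref{eqn:Ksquare1},
$$
\int_\Om |K_\Om(w,z)|^2 K_\Om(w,w)^{-\alpha}d_\Om(w)^\beta dV(w)=\int_\Om \big(T_{\alpha,\beta}(K_\Om(\cdot,z))(\xi)\big)\,\overline{K_\Om(\xi,z)}\,dV(\xi),
$$
I would bound the right-hand side above by Hölder's inequality with exponents $q$ and $q'$ after inserting the weights $d_\Om(\xi)^{a/q}$ and $d_\Om(\xi)^{-a/q}$, obtaining the majorant $\|T_{\alpha,\beta}(K_\Om(\cdot,z))\|_{q,a}\,\|K_\Om(\cdot,z)\|_{q',-aq'/q}$. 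The $\mathcal B$-polydisc lower bound \eqref{eq-le-K} controls the left-hand side from below by $K_\Om(z,z)^{1-\alpha}d_\Om(z)^\beta$, while Lemma~\ref{lem-nor-ue} applied to the pair $(q',-aq'/q)$ (whose admissibility $-1<-aq'/q<2(q'-1)$ follows from $-1<a<q-1$) gives $\|K_\Om(\cdot,z)\|_{q',-aq'/q}\lesssim K_\Om(z,z)^{1/q}d_\Om(z)^{-a/q}$. Dividing yields
$$
\|T_{\alpha,\beta}(K_\Om(\cdot,z))\|_{q,a}\gtrsim K_\Om(z,z)^{1-\alpha-1/q}d_\Om(z)^{\beta+a/q}.
$$

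Finally I would normalize: dividing by $\|K_\Om(\cdot,z)\|_{p,a}$ and bounding the latter above via Lemma~\ref{lem-nor-ue} (valid since $-1<a<2(p-1)$) by $K_\Om(z,z)^{1-1/p}d_\Om(z)^{a/p}$, one arrives at
$$
\|T_{\alpha,\beta}k_{\Om,a}(\cdot,z)\|_{q,a}\gtrsim K_\Om(z,z)^{-\alpha+(1/p-1/q)}d_\Om(z)^{\beta+a(1/q-1/p)}=M^{\alpha,\beta}_{\Om,p,q,a}(z),
$$
which closes the argument. I expect the main difficulty to be organisational rather than conceptual: one must verify that $-aq'/q$ lies in the admissible range of Lemma~\ref{lem-nor-ue} for all $a$ in the hypothesised interval and track the exponents of $K_\Om(z,z)$ and $d_\Om(z)$ through both the Hölder step and the normalisation. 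The genuinely delicate inputs — the two-sided control of the kernel integral and the weak-null property of $k_{\Om,a}(\cdot,z)$ — are already furnished by \eqref{eq-le-K} and Lemma~\ref{lem-wc-k}, so the remaining work is essentially a careful reassembly of these pieces.
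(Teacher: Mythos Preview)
Your proposal is correct and follows essentially the same route as the paper: test $T_{\alpha,\beta}-S$ on the normalized kernels $k_{\Om,a}(\cdot,z)$, use Lemma~\ref{lem-wc-k} to kill the compact part, and derive the lower bound $\|T_{\alpha,\beta}k_{\Om,a}(\cdot,z)\|_{q,a}\gtrsim M^{\alpha,\beta}_{\Om,p,q,a}(z)$ by combining \eqref{eqn:Ksquare1}, the H\"older step \eqref{eq-T}, the $\mathcal B$-polydisc lower estimate \eqref{eq-le-K}, and Lemma~\ref{lem-nor-ue} applied to both $(p,a)$ and $(q',-aq'/q)$. The only cosmetic difference is that you pass to a sequence realizing the $\limsup$, whereas the paper takes $\limsup_{z\to\partial\Om}$ directly.
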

\begin{proof} 
Let $T$ be an arbitrary compact operator from $L^p_a(\Om)$ to $L^q_a(\Om)$. Then, by Lemma \ref{lem-wc-k}, $Tk_{\Om,a}(\cdot, z) \to 0$ in $L^q_a(\Om)$ as $z \to \partial \Om$.

Using \eqref{eq-T}, \eqref{eq-le-K}, and Lemma \ref{lem-nor-ue}, for every $z \in \Om$ near the boundary $\partial \Om$, we get 
\begin{align}\label{eq-lo-tk}
\nonumber \|T_{\alpha, \beta} k_{\Om, a}(\cdot, z)\|_{q, a} & = \dfrac{\|T_{\alpha, \beta} K_{\Om}(\cdot, z)\|_{q, a}}{\|K_{\Om}(\cdot, z)\|_{p, a}} \\
\nonumber &\geq \dfrac{1}{\|K_{\Om}(\cdot, z)\|_{p, a} \|K_{\Om}(\cdot, z)\|_{q', -\frac{aq'}{q}}} \int_{\Om} |K_{\Om}(w,z)|^2 K_{\Om}(w,w)^{-\alpha}d_{\Om}(w)^{\beta} dV(w) \\
\nonumber & \gtrsim K_{\Om}(z,z)^{\frac{1}{p}-1} d_{\Om}(z)^{-\frac{a}{p}} K_{\Om}(z,z)^{\frac{1}{q'}-1}d_{\Om}(z)^{\frac{a}{q}} K_{\Om}(z,z)^{1 - \alpha}d_{\Om}(z)^{\beta} \\
& = K_{\Om}(z,z)^{-\alpha + \left(\frac{1}{p}-\frac{1}{q}\right)} d_{\Om}(z)^{\beta + a \left(\frac{1}{q} - \frac{1}{p}\right)} = M^{\alpha, \beta}_{\Om, p, q, a}(z).
\end{align}

Consequently, for every $z$ near the boundary $\partial \Om$,
\begin{align*}
\|T_{\alpha, \beta} - T\|_{L_a^p(\Om) \to L_a^q(\Om)} & \geq \|T_{\alpha, \beta}k_{\Om, a}(\cdot, z) - Tk_{\Om, a}(\cdot, z)\|_{q,a} \\
& \geq \|T_{\alpha, \beta}k_{\Om, a}(\cdot, z)\|_{q,a} - \|Tk_{\Om, a}(\cdot, z)\|_{q,a} \\
& \gtrsim M^{\alpha, \beta}_{\Om, p, q, a}(z) - \|Tk_{\Om}(\cdot, z)\|_{q,a}.
\end{align*}
Letting $z \to \partial \Om$ in the last inequality, we get
$$
\|T_{\alpha, \beta} - T\|_{L_a^p(\Om) \to L_a^q(\Om)} \gtrsim \limsup_{z \to \infty} M^{\alpha, \beta}_{\Om, p, q, a}(z),
$$
which implies the desired estimate.
\end{proof}

From Theorems \ref{thm:main1:com} and \ref{thm:main2:com}, we immediately get the following result for the case $p = q$ and $a = 0$.

\begin{corollary}\label{cor:com}
Let $1 < p < \infty$ and $\Om$ be a bounded smooth pseudoconvex domain in $\mathbb C^n$ such that the Bergman kernel is of sharp $\mathcal B$-type and $\B$-polydisc condition is satisfied. The operator $T_{\alpha, \beta}$ with $\alpha, \beta \geq 0$ is compact on $L^p(\Om)$ if and only if $\alpha + \beta > 0$.
\end{corollary}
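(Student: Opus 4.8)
The plan is to read the compactness criterion directly off the two-sided essential-norm estimate already in hand, and then to translate the hypothesis $\alpha+\beta>0$ into the vanishing of the governing $\limsup$ by means of the elementary inequality \eqref{eq-ine-dk}. Specializing $M^{\alpha,\beta}_{\Om,p,q,a}$ to $p=q$, $a=0$, the factor $K_{\Om}(w,w)^{1/p-1/q}d_{\Om}(w)^{a(1/q-1/p)}$ collapses to $1$, so that $M^{\alpha,\beta}_{\Om,p,p,0}(w)=K_{\Om}(w,w)^{-\alpha}d_{\Om}(w)^{\beta}$.

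First I would check that the parameter constraints of Theorems~\ref{thm:main1:com} and~\ref{thm:main2:com} survive this specialization. With $q=p$ and $a=0$ the condition $-1<a<\min\{2(p-1),q-1\}=p-1$ of Theorem~\ref{thm:main2:com} reduces to $0<p-1$, which holds since $p>1$; likewise $\frac{q}{p'}=p-1>0$, so the condition $-1<a<\frac{q}{p'}$ of Theorem~\ref{thm:main1:com} is met. Thus $T_{\alpha,\beta}$ is bounded on $L^p(\Om)$ and, combining the upper and lower essential-norm bounds, its essential norm satisfies
$$
\|T_{\alpha,\beta}\|_{e,L^p(\Om)\to L^p(\Om)}\approx\limsup_{w\to\partial\Om}K_{\Om}(w,w)^{-\alpha}d_{\Om}(w)^{\beta}.
$$
Since an operator is compact exactly when its essential norm is zero, $T_{\alpha,\beta}$ is compact on $L^p(\Om)$ if and only if the right-hand side vanishes.

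It remains to show this $\limsup$ is zero precisely when $\alpha+\beta>0$. For the forward implication, assume $\alpha+\beta>0$; since $\alpha,\beta\geq0$ this forces $2\alpha+\beta>0$. Using \eqref{eq-ine-dk} in the form $K_{\Om}(w,w)^{-\alpha}\leq d_{\Om}(w)^{2\alpha}$ (valid as $\alpha\geq0$), I would estimate
$$
K_{\Om}(w,w)^{-\alpha}d_{\Om}(w)^{\beta}\leq d_{\Om}(w)^{2\alpha+\beta}\longrightarrow 0\quad\text{as }w\to\partial\Om,
$$
so the $\limsup$ vanishes and $T_{\alpha,\beta}$ is compact. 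Conversely, if $\alpha+\beta=0$ then $\alpha=\beta=0$, whence $M^{0,0}_{\Om,p,p,0}\equiv1$ and the $\limsup$ equals $1\neq0$; hence $T_{0,0}$, which is merely the Bergman projection, fails to be compact.

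I expect no real obstacle here: the two essential-norm theorems perform all the analytic labor, and the only delicate points are the routine verification of the parameter ranges and the simple observation that $\alpha,\beta\geq0$ with $\alpha+\beta>0$ yields the strictly positive exponent $2\alpha+\beta$, which is exactly what makes \eqref{eq-ine-dk} deliver a vanishing power of $d_{\Om}$.
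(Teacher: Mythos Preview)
Your proposal is correct and follows exactly the route the paper takes: the corollary is stated as an immediate consequence of Theorems~\ref{thm:main1:com} and~\ref{thm:main2:com} specialized to $p=q$ and $a=0$, and you have simply filled in the details the paper omits. The only minor imprecision is that you assert boundedness of $T_{\alpha,\beta}$ before explicitly verifying $M^{\alpha,\beta}_{\Om,p,p,0}\in L^\infty(\Om)$, but the same estimate $K_{\Om}(w,w)^{-\alpha}d_{\Om}(w)^{\beta}\le d_{\Om}(w)^{2\alpha+\beta}$ you use later supplies this immediately.
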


\section{Schatten class Toeplitz operators}\label{sec-sch}
In this section we establish a characterization of Schatten class membership of Toeplitz operators on $L^2(\Om)$. It should be noted that this characterization has been investigated only on the unit ball (see, \cite{APP15, P14, Z07}) and has not been considered before on pseudoconvex domains whose Bergman kernel has no an explicit formula.

Recall that for $0 < s < \infty$, a compact operator $T$ acting on a separable Hilbert space $H$ belongs to the Schatten class $S_s$ if its sequence of singular numbers belongs to the sequence space $\ell^s$, where the singular numbers are the square roots of the eigenvalues of the positive operator $T^* T$ , where $T^*$ is the Hilbert adjoint of $T$.

For simplicity, we write $k_{\Om}$ and $\langle \cdot, \cdot \rangle$ instead of $k_{\Om, 0}$ and $\langle \cdot, \cdot \rangle_0$, respectively.
For a positive operator on $L^2(\Om)$, the Berezin transform of the operator $T$ is defined by 
$$
\widetilde{T}(z): = \langle T k_{\Om}(\cdot, z), k_{\Om}(\cdot, z) \rangle, z \in \Om.
$$
The following auxiliary lemmas are elementary, hereby we sketch the proofs for the sake of the completeness.

\begin{lemma}\label{lem-BT}
Let $\Om$ be a bounded domain in $\C^n$ and $T$ a positive compact operator on $L^2(\Om)$. The following statements are valid:
\begin{itemize}
\item[(a)] For $s \geq 1$, if $T$ is in $S_s$, then $K_{\Om}(z,z) \widetilde{T}(z)^s  \in L^1(\Om)$.
\item[(b)] For $0 < s \leq 1$, if $K_{\Om}(z,z) \widetilde{T}(z)^s  \in L^1(\Om)$, then $T$ is in $S_s$.
\end{itemize}
\end{lemma}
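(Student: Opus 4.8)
The plan is to combine the spectral decomposition of the positive compact operator $T$ with a ``reproducing formula for the trace'' and Jensen's inequality applied to the power $t \mapsto t^s$, which is convex for $s\ge 1$ and concave for $0<s\le 1$. First I would normalize: since $\|K_{\Om}(\cdot,z)\|_2^2 = K_{\Om}(z,z)$, we have $k_{\Om}(w,z) = K_{\Om}(w,z)K_{\Om}(z,z)^{-1/2}$. Diagonalizing $T$, write $T = \sum_j \lambda_j \langle \cdot, e_j\rangle e_j$, where $\lambda_j \ge 0$ and $\{e_j\}$ is an orthonormal basis of $L^2(\Om)$ of eigenvectors; as $T$ is positive its singular numbers are the $\lambda_j$, so $T \in S_s$ is equivalent to $\Tr(T^s) = \sum_j \lambda_j^s < \infty$. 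Setting $w_j(z) := |\langle k_{\Om}(\cdot,z), e_j\rangle|^2 \ge 0$, Parseval gives $\sum_j w_j(z) = \|k_{\Om}(\cdot,z)\|_2^2 = 1$, while $\widetilde{T}(z) = \sum_j \lambda_j w_j(z)$ and $\langle T^s k_{\Om}(\cdot,z), k_{\Om}(\cdot,z)\rangle = \sum_j \lambda_j^s w_j(z)$; thus both quantities are averages of $\{\lambda_j\}$ and $\{\lambda_j^s\}$ against the probability weights $w_j(z)$.

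The central step is the trace reproducing identity
$$\int_{\Om} K_{\Om}(z,z)\,\langle T^s k_{\Om}(\cdot,z), k_{\Om}(\cdot,z)\rangle\,dV(z) = \Tr(T^s),$$
valid when the eigenvectors attached to nonzero eigenvalues lie in the Bergman space $A^2(\Om)$. I would prove it by expanding $K_{\Om}(\cdot,z) = \sum_j \overline{e_j(z)}\,e_j$, so that $K_{\Om}(z,z)\langle T^s k_{\Om}(\cdot,z),k_{\Om}(\cdot,z)\rangle = \sum_j \lambda_j^s |e_j(z)|^2$, and then integrating term by term using $\int_{\Om}|e_j|^2\,dV = 1$.

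With this in hand both parts are immediate. For (a), $s \ge 1$ makes $t^s$ convex, so Jensen gives $\widetilde{T}(z)^s \le \langle T^s k_{\Om}(\cdot,z),k_{\Om}(\cdot,z)\rangle$ pointwise; multiplying by $K_{\Om}(z,z)$, integrating, and invoking the trace identity yields $\int_{\Om} K_{\Om}(z,z)\widetilde{T}(z)^s\,dV(z) \le \Tr(T^s) = \|T\|_{S_s}^s < \infty$, i.e. $K_{\Om}(z,z)\widetilde{T}(z)^s \in L^1(\Om)$. For (b), $0 < s \le 1$ makes $t^s$ concave, so Jensen reverses to $\langle T^s k_{\Om}(\cdot,z),k_{\Om}(\cdot,z)\rangle \le \widetilde{T}(z)^s$; the trace identity then gives $\Tr(T^s) \le \int_{\Om} K_{\Om}(z,z)\widetilde{T}(z)^s\,dV(z) < \infty$ by hypothesis, whence $T \in S_s$.

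The hard part will be the rigorous justification of the trace identity, on two counts. The interchange of $\sum_j$ with $\int_{\Om}$ is harmless since every term is nonnegative (monotone convergence / Tonelli). More delicate is that the expansion $K_{\Om}(\cdot,z) = \sum_j \overline{e_j(z)}\,e_j$ — equivalently the relation $\langle K_{\Om}(\cdot,z), e_j\rangle = \overline{e_j(z)}$ — requires the eigenvectors with $\lambda_j \neq 0$ to lie in $A^2(\Om)$; in general the same computation delivers only $\int_{\Om} K_{\Om}(z,z)\langle T^s k_{\Om}(\cdot,z),k_{\Om}(\cdot,z)\rangle\,dV = \sum_j \lambda_j^s\|P e_j\|^2 \le \Tr(T^s)$, where $P$ is the Bergman projection. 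For (a) this is inessential, as only the bound $\le \Tr(T^s)$ is used and it survives replacing each $e_j$ by $Pe_j$ (of norm $\le 1$). For (b), by contrast, the equality in the trace identity is indispensable, and this is precisely where one uses that $T$ is positive with range contained in $A^2(\Om)$ — the Toeplitz structure in the intended application — which forces $e_j = \lambda_j^{-1}Te_j \in A^2(\Om)$ for every nonzero eigenvalue, so that $\|Pe_j\| = 1$.
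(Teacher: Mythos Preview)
Your argument is correct and follows essentially the same architecture as the paper's proof: establish the trace identity $\int_{\Om} K_{\Om}(z,z)\langle T^s k_{\Om}(\cdot,z),k_{\Om}(\cdot,z)\rangle\,dV(z)=\Tr(T^s)$ and combine it with the Jensen-type comparison $\widetilde{T}(z)^s \lessgtr \langle T^s k_{\Om}(\cdot,z),k_{\Om}(\cdot,z)\rangle$ (which the paper quotes as \cite[Proposition~1.31]{Zhu07} and you derive directly via the spectral decomposition). Your discussion of the ``hard part'' is in fact more careful than the paper's own proof: the paper writes $T^s e_m(z)=\langle T^s e_m,K_{\Om}(\cdot,z)\rangle$ without comment, implicitly assuming $\Ran T\subset A^2(\Om)$, and you correctly isolate this as the point where (b) needs that hypothesis (automatically satisfied for the Toeplitz operators $T_{\alpha,\beta}$) while (a) survives with only the inequality $\sum_j\lambda_j^s\|Pe_j\|^2\le\Tr(T^s)$.
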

\begin{proof}
 By \cite[Lemma~1.25]{Zhu07}, the positive compact operator $T \in S_s$  if and only if $T^s \in S_1$. 

Let $(e_m)_m$ be an arbitrary orthonormal basis in $L^2(\Om)$. Since
$$
\sum_{m=1}^{\infty} \overline{e_m(z)} e_m  = \sum_{m=1}^{\infty} \langle K_{\Om}(\cdot,z), e_m \rangle e_m  = K_{\Om}(\cdot,z) \text{ for every } z \in \Om,
$$ 
we get
\begin{align*}
\text{tr}(T^s) & = \sum_{m=1}^{\infty} \langle T^se_m, e_m \rangle = \sum_{m=1}^{\infty} \int_{\Om} T^s e_m(z) \overline{e_m(z)}dV(z)\\
& = \sum_{m=1}^{\infty} \int_{\Om} \langle T^s e_m, K_{\Om}(\cdot, z) \rangle \overline{e_m(z)}dV(z) = \sum_{m=1}^{\infty} \int_{\Om} \left \langle T^s  \overline{e_m(z)} e_m, K_{\Om}(\cdot, z) \right \rangle dV(z) \\
& = \int_{\Om} \left \langle T^s \left(\sum_{m=1}^{\infty} \overline{e_m(z)} e_m \right), K_{\Om}(\cdot, z) \right \rangle dV(z) = \int_{\Om} \left \langle T^s K_{\Om}(\cdot, z), K_{\Om}(\cdot, z) \right \rangle dV(z) \\
& = \int_{\Om} \left \langle T^s k_{\Om}(\cdot, z), k_{\Om}(\cdot, z) \right \rangle  \|K_{\Om}(\cdot,z)\|^2_2 dV(z) = \int_{\Om} \left \langle T^s k_{\Om}(\cdot, z), k_{\Om}(\cdot, z) \right \rangle K_{\Om}(z,z)dV(z).
\end{align*} 

Moreover, by \cite[Proposition~1.31]{Zhu07},  for every $z \in \Om$, we have
$$
\left \langle T^s k_{\Om}(\cdot, z), k_{\Om}(\cdot, z) \right \rangle \geq \left \langle T k_{\Om}(\cdot, z), k_{\Om}(\cdot, z) \right \rangle^s = \widetilde{T}(z)^s \text{ if } s \geq 1,
$$
and
$$
\left \langle T^s k_{\Om}(\cdot, z), k_{\Om}(\cdot, z) \right \rangle \leq \left \langle T k_{\Om}(\cdot, z), k_{\Om}(\cdot, z) \right \rangle^s = \widetilde{T}(z)^s \text{ if } 0 < s \leq 1.
$$

Consequently, both assertions (a) and (b) follow from the above inequalities.
\end{proof}

\begin{lemma}\label{lem-sc-sch}
Let $s \geq 1$ and $\Om$ be a bounded domain in $\C^n$. Suppose that $\psi$ is a positive function in $L^{\infty}(\Om)$ such that $T_{\psi}$ is compact on $L^2(\Om)$. If $K_{\Om}(w,w)\psi(w)^s \in L^1(\Om)$, then $T_{\psi}$ belongs to $S_s$.
\end{lemma}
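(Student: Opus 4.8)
The plan is to bypass the Berezin-transform estimates of Lemma~\ref{lem-BT} (which only yield sufficiency in the range $0<s\le 1$) and instead prove the sharp bound
$$
\|T_{\psi}\|_{S_s}^{s} \le \int_{\Om} \psi(w)^{s} K_{\Om}(w,w)\, dV(w)
$$
by complex interpolation, viewing $\psi \mapsto T_{\psi}$ as a single linear map and interpolating between its $S_1$ and $S_\infty$ endpoint bounds, both measured against the reference measure $d\mu(w) := K_{\Om}(w,w)\,dV(w)$. Since $T_\psi$ maps into the Bergman space and is positive and self-adjoint there for $\psi\ge 0$, I work throughout on the Bergman space, consistently with the conventions used for the Berezin transform above; the displayed bound forces $T_\psi\in S_s$, so the compactness hypothesis is in fact automatic.

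First I would record the frame representation of the Toeplitz operator. Using the reproducing property $f(w)=\langle f, K_{\Om}(\cdot,w)\rangle$ and $\|K_{\Om}(\cdot,w)\|_2^2=K_{\Om}(w,w)$, one writes, for $f$ in the Bergman space,
$$
T_{\psi}f = \int_{\Om} \psi(w)\,\langle f, K_{\Om}(\cdot,w)\rangle\,K_{\Om}(\cdot,w)\,dV(w) = \int_{\Om}\psi(w)\,\big(k_{\Om}(\cdot,w)\otimes k_{\Om}(\cdot,w)\big)\,d\mu(w),
$$
where $k_{\Om}(\cdot,w)\otimes k_{\Om}(\cdot,w)$ denotes the rank-one orthogonal projection $g\mapsto \langle g,k_{\Om}(\cdot,w)\rangle\,k_{\Om}(\cdot,w)$, of unit $S_1$-norm. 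When $\psi\in L^1(d\mu)$ this is an absolutely convergent Bochner integral in $S_1$, and Minkowski's integral inequality for the trace norm gives the first endpoint
$$
\|T_{\psi}\|_{S_1} \le \int_{\Om}|\psi(w)|\,\|k_{\Om}(\cdot,w)\otimes k_{\Om}(\cdot,w)\|_{S_1}\,d\mu(w) = \|\psi\|_{L^1(d\mu)}.
$$
For $\psi\ge 0$ this is the equality $\text{tr}\,T_\psi=\int_\Om \psi(w)\,K_{\Om}(w,w)\,dV(w)$, which also drops out of the trace computation in the proof of Lemma~\ref{lem-BT} together with $\int_\Om|K_{\Om}(w,z)|^2\,dV(z)=K_{\Om}(w,w)$; this already settles the base case $s=1$.

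For the second endpoint, for $f$ in the Bergman space one has $\langle T_{\psi}f,f\rangle=\int_{\Om}\psi(w)|f(w)|^2\,dV(w)$, whence $\|T_{\psi}\|_{S_\infty}=\|T_{\psi}\|_{\mathrm{op}}\le\|\psi\|_{\infty}=\|\psi\|_{L^\infty(d\mu)}$. Thus the linear map $\Lambda:\psi\mapsto T_{\psi}$ is a contraction $L^1(d\mu)\to S_1$ and $L^\infty(d\mu)\to S_\infty$. Since the Schatten ideals form a complex interpolation scale with $[S_1,S_\infty]_{\theta}=S_s$ for $\tfrac{1}{s}=1-\theta$, while $[L^1(d\mu),L^\infty(d\mu)]_{\theta}=L^s(d\mu)$, the interpolation theorem yields that $\Lambda:L^s(d\mu)\to S_s$ is a contraction. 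Specializing to our $\psi\ge 0$ gives $\|T_{\psi}\|_{S_s}^s\le\int_{\Om}\psi(w)^s K_{\Om}(w,w)\,dV(w)$, which is finite by hypothesis, so $T_{\psi}\in S_s$.

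The main obstacle is the rigor of these two ingredients rather than any hard estimate: justifying that the frame representation is a genuine $S_1$-valued Bochner integral, so that Minkowski applies and the endpoint is not merely formal, and invoking the complex interpolation of the Schatten scale together with $[L^1,L^\infty]_\theta=L^s$ for the $\sigma$-finite weighted measure $d\mu$. A self-contained alternative that stays closer to the tools already developed is a duality argument: estimate $\|T_\psi\|_{S_s}=\sup\{|\text{tr}(T_\psi A)|:\|A\|_{S_{s'}}\le 1\}$ by expanding $\text{tr}(T_\psi A)=\int_\Om\psi(w)\,\langle Ak_{\Om}(\cdot,w),k_{\Om}(\cdot,w)\rangle\,d\mu(w)$, applying H\"older with exponents $s,s'$, and bounding the resulting $L^{s'}(d\mu)$-norm of the Berezin transform of $A$ by $\|A\|_{S_{s'}}$ exactly as in the trace computation behind Lemma~\ref{lem-BT}(a); the only delicate point in this variant is passing from positive $A$ to general $A$.
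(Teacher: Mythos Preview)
Your argument is correct and reaches the same sharp bound $\|T_{\psi}\|_{S_s}^s\le\int_{\Om}\psi(w)^sK_{\Om}(w,w)\,dV(w)$, but by a genuinely different route. The paper argues directly: for an arbitrary orthonormal set $(e_m)$ one computes $\langle T_{\psi}e_m,e_m\rangle=\int_{\Om}|e_m|^2\psi\,dV$, applies Jensen's inequality (called H\"older there) for the convex function $t\mapsto t^s$ against the probability measure $|e_m|^2\,dV$ to obtain $\langle T_{\psi}e_m,e_m\rangle^s\le\int_{\Om}|e_m|^2\psi^s\,dV$, sums over $m$ using Bessel's inequality $\sum_m|e_m(w)|^2\le K_{\Om}(w,w)$, and concludes via Zhu's criterion \cite[Theorem~1.27]{Zhu07} for positive operators. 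This is three elementary lines and stays within the toolkit already set up for Lemma~\ref{lem-BT}. Your interpolation proof is heavier in prerequisites---it needs the complex interpolation of the Schatten scale and the identification $[L^1(d\mu),L^\infty(d\mu)]_\theta=L^s(d\mu)$---but it packages the estimate as a single linear contraction $L^s(d\mu)\to S_s$, works for complex-valued symbols, transports verbatim to any reproducing-kernel Hilbert space, and makes the compactness hypothesis visibly redundant. Your remark that the argument really lives on $A^2(\Om)$ (where $T_\psi$ is positive and self-adjoint) is pertinent: the paper's invocation of \cite[Theorem~1.27]{Zhu07} and the step $\overline{\int K_{\Om}(w,z)e_m(z)\,dV(z)}=\overline{e_m(w)}$ tacitly require the same.
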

\begin{proof}
Let $(e_m)_m$ be an arbitrary orthonormal set in $L^2(\Om)$. Then for every $m \in \mathbb N$, using Fubini's theorem, we obtain
\begin{align*}
\langle T_{\psi} e_m, e_m \rangle & = \int_{\Om} \left( \int_{\Om} K_{\Om}(z,w)\psi(w)e_m(w)dV(w)\right) \overline{e_m(z)}dV(z) \\
& = \int_{\Om} \left( \overline{\int_{\Om} K_{\Om}(w, z)e_m(z)dV(z)}\right) \psi(w) e_m(w)dV(w) = \int_{\Om} |e_m(w)|^2 \psi(w)dV(w).
\end{align*}
Thus, using H\"older's inequality and the inequality (based on the Bessel's inequality) 
$$
\sum_{m = 1}^{\infty}|e_m(w)|^{2} = \sum_{m = 1}^{\infty}|\langle K_{\Om}(\cdot,w), e_m \rangle|^{2} \leq \|K_{\Om}(\cdot, w)\|_2^2,
$$
we get
\begin{align*}
\sum_{m = 1}^{\infty}\langle T_{\psi} e_m, e_m \rangle^s & \leq  \sum_{m = 1}^{\infty} \int_{\Om} |e_m(w)|^{2} \psi(w)^s dV(w) \\
& \leq  \int_{\Om} \left(\sum_{m = 1}^{\infty}|e_m(w)|^{2}\right) \psi(w)^s dV(w) \\
& \leq  \int_{\Om} \|K_{\Om}(\cdot,w)\|_2^{2} \psi(w)^s dV(w) = \int_{\Om} K_{\Om}(w,w) \psi(w)^s  dV(w).
\end{align*}
From this and \cite[Theorem~1.27]{Zhu07}, the assertion follows.
\end{proof}

\begin{theorem}\label{thm:main:sch}
Let $\Om$ be a bounded smooth pseudoconvex domain in $\mathbb C^n$ such that the Bergman kernel is of sharp $\mathcal B$-type and $\B$-polydisc condition  is satisfied. Suppose that the operator $T_{\alpha, \beta}$ with $\alpha, \beta \geq 0$ is compact on $L^2(\Om)$. The following statements hold:
\begin{itemize}
\item[(a)] For $s \geq 1$, $T_{\alpha, \beta} \in S_s$ if and only if $K_{\Om}(w,w)^{1 - s\alpha} d_{\Om}(w)^{s\beta} \in L^1(\Om)$.
\item[(b)] For $s \in (0, 1)$, if $2\alpha + \beta < 2$ and $K_{\Om}(w,w)^{1 - s\alpha} d_{\Om}(w)^{s\beta} \in L^1(\Om)$, then $T_{\alpha, \beta} \in S_s$.
\end{itemize}
\end{theorem}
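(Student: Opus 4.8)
The plan is to route everything through the Berezin transform of the \emph{positive} operator $T_{\alpha, \beta}$. Since $\psi(w)=K_{\Om}(w,w)^{-\alpha}d_{\Om}(w)^{\beta}\ge 0$ the operator is positive, and because $\|K_{\Om}(\cdot,z)\|_2^2=K_{\Om}(z,z)$ one computes
$$
\widetilde{T_{\alpha, \beta}}(z)=\frac{1}{K_{\Om}(z,z)}\int_{\Om}|K_{\Om}(w,z)|^2K_{\Om}(w,w)^{-\alpha}d_{\Om}(w)^{\beta}\,dV(w),\qquad z\in\Om.
$$
Dividing the lower estimate already proved in \eqref{eq-le-K} by $K_{\Om}(z,z)$ gives $\widetilde{T_{\alpha, \beta}}(z)\gtrsim K_{\Om}(z,z)^{-\alpha}d_{\Om}(z)^{\beta}=\psi(z)$ for $z$ near $\partial\Om$; the heart of the matter will be the reverse inequality.

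For part (a) I would treat the two implications independently. Necessity is immediate: Lemma \ref{lem-BT}(a) gives $K_{\Om}(z,z)\widetilde{T_{\alpha, \beta}}(z)^s\in L^1(\Om)$, and substituting the lower bound $\widetilde{T_{\alpha, \beta}}(z)\gtrsim\psi(z)$ forces $K_{\Om}(z,z)^{1-s\alpha}d_{\Om}(z)^{s\beta}\in L^1(\Om)$ (the integrand is bounded on compact subsets of $\Om$, so only its boundary behaviour matters). Sufficiency is a one-line application of Lemma \ref{lem-sc-sch}: $\psi\ge 0$ lies in $L^{\infty}(\Om)$ because $\psi(w)\le d_{\Om}(w)^{2\alpha+\beta}$ by \eqref{eq-ine-dk}, $T_{\alpha, \beta}$ is compact by hypothesis, and $K_{\Om}(w,w)\psi(w)^s=K_{\Om}(w,w)^{1-s\alpha}d_{\Om}(w)^{s\beta}\in L^1(\Om)$ is exactly the assumption, so $T_{\alpha, \beta}\in S_s$.

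For part (b), Lemma \ref{lem-BT}(b) reduces the claim to $K_{\Om}(z,z)\widetilde{T_{\alpha, \beta}}(z)^s\in L^1(\Om)$, and since the hypothesis supplies $K_{\Om}(z,z)^{1-s\alpha}d_{\Om}(z)^{s\beta}\in L^1(\Om)$, it suffices to prove the sharp upper bound $\widetilde{T_{\alpha, \beta}}(z)\lesssim\psi(z)$, that is,
$$
\int_{\Om}|K_{\Om}(w,z)|^2K_{\Om}(w,w)^{-\alpha}d_{\Om}(w)^{\beta}\,dV(w)\lesssim K_{\Om}(z,z)^{1-\alpha}d_{\Om}(z)^{\beta}
$$
for $z$ near $\partial\Om$. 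This is the main obstacle: the naive pointwise bound $K_{\Om}(w,w)^{-\alpha}\le d_{\Om}(w)^{2\alpha}$ inserted into Proposition \ref{prop:main} only yields $\widetilde{T_{\alpha, \beta}}(z)\lesssim d_{\Om}(z)^{2\alpha+\beta}$, which is strictly larger than $\psi(z)$ near the boundary and does not integrate against $K_{\Om}(z,z)$ to the required quantity. Instead I would recycle the boundedness theory: when $\alpha>0$, choose auxiliary exponents with $\frac1p-\frac1q=\alpha$ and the weight $a=\beta/\alpha$. For this choice $M_{\Om, p, q, a}^{\alpha, \beta}\equiv 1$, so Theorem \ref{thm:main1} makes $T_{\alpha, \beta}\colon L^p_a(\Om)\to L^q_a(\Om)$ bounded; feeding this into the H\"older--reproducing estimate \eqref{eq-T} and then applying Lemma \ref{lem-nor-ue} to both $\|K_{\Om}(\cdot,z)\|_{p,a}$ and $\|K_{\Om}(\cdot,z)\|_{q',-aq'/q}$, the exponents collapse to exactly $K_{\Om}(z,z)^{1-1/p+1/q}d_{\Om}(z)^{a(1/p-1/q)}=K_{\Om}(z,z)^{1-\alpha}d_{\Om}(z)^{\beta}$. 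The case $\alpha=0$ needs no auxiliary trick, following directly from Proposition \ref{prop:main} with $b=\beta<2$.

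The role of the hypothesis $2\alpha+\beta<2$ is precisely to make this auxiliary choice admissible. After substituting $a=\beta/\alpha$, the constraints required to invoke Theorem \ref{thm:main1} and Lemma \ref{lem-nor-ue} amount to $p>1+\frac{\beta}{2\alpha}$ together with $\frac1p>\alpha$, and the interval $\bigl(1+\frac{\beta}{2\alpha},\,\frac{1}{\alpha}\bigr)$ is nonempty exactly when $2\alpha+\beta<2$; taking $p$ close to $1/\alpha$ (so that $q\to\infty$) then also secures the remaining inequalities $a<q-1$ and $a<q/p'$. With the upper bound established, combining it with \eqref{eq-le-K} yields $\widetilde{T_{\alpha, \beta}}(z)\approx\psi(z)$, whence $K_{\Om}(z,z)\widetilde{T_{\alpha, \beta}}(z)^s\lesssim K_{\Om}(z,z)^{1-s\alpha}d_{\Om}(z)^{s\beta}\in L^1(\Om)$, and Lemma \ref{lem-BT}(b) completes the proof.
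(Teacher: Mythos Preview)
Your treatment of part (a) is essentially identical to the paper's: sufficiency via Lemma~\ref{lem-sc-sch}, necessity via Lemma~\ref{lem-BT}(a) combined with the lower bound \eqref{eq-le-K} for the Berezin transform.

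For part (b) your argument is correct but follows a genuinely different path from the paper. The paper establishes the upper bound
\[
\int_{\Om}|K_{\Om}(w,z)|^2K_{\Om}(w,w)^{-\alpha}d_{\Om}(w)^{\beta}\,dV(w)\lesssim K_{\Om}(z,z)^{1-\alpha}d_{\Om}(z)^{\beta}
\]
by proving a new kernel estimate from scratch (Proposition~A and Lemma~A in the Appendix), which generalizes Proposition~\ref{prop:main} to integrands carrying an extra factor $K_{\Om}(w,w)^{-s}$. You instead recycle the already-proved boundedness machinery: choosing $p,q$ with $\tfrac1p-\tfrac1q=\alpha$ and $a=\beta/\alpha$ forces $M^{\alpha,\beta}_{\Om,p,q,a}\equiv 1$, so Theorem~\ref{thm:main1} gives boundedness and then \eqref{eq-T} plus Lemma~\ref{lem-nor-ue} deliver exactly the same upper bound. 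Your observation that the admissibility window $\bigl(1+\tfrac{\beta}{2\alpha},\,\tfrac{1}{\alpha}\bigr)$ for $p$ is nonempty precisely when $2\alpha+\beta<2$, and that sending $p\uparrow 1/\alpha$ handles the residual constraints $a<q-1$ and $a<q/p'$, is correct. Your approach is more economical in that it introduces no new kernel estimates; the paper's approach, on the other hand, isolates Lemma~A as a result of independent interest and does not rely on the Schur-test machinery. One small polish: for the $\alpha=0$ case you invoke Proposition~\ref{prop:main}, which is local; citing Lemma~\ref{lem-nor-ue} with $p=2$, $a=\beta$ (valid since $0\le\beta<2$) gives the global statement directly.
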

\begin{proof}
(a) The sufficiency follows immediately from Lemma \ref{lem-sc-sch}. 

\textbf{Necessity.} Obviously, $T_{\alpha, \beta}$ is positive operator on $L^2(\Om)$. 
Moreover, for every $z \in \Om$ near the boundary $\partial \Om$, by \eqref{eqn:Ksquare1} and \eqref{eq-le-K}, we get
\begin{align*}
\nonumber \widetilde{T_{\alpha, \beta}}(z) & = \left \langle T_{\alpha, \beta} k_{\Om}(\cdot, z), k_{\Om}(\cdot, z) \right \rangle = \|K_{\Om}(\cdot,z)\|_2^{-2} \left \langle T_{\alpha, \beta} K_{\Om}(\cdot, z), K_{\Om}(\cdot, z) \right \rangle \\
\nonumber & =  K_{\Om}(z,z)^{-1} \int_{\Om} (T_{\alpha,\beta} K_{\Om}(\cdot, z))(\xi) K_{\Om}(z, \xi)dV(\xi) \\
 & = K_{\Om}(z,z)^{-1} \int_{\Om} |K_{\Om}(w,z)|^2 K_{\Om}(w,w)^{-\alpha} d_{\Om}(w)^{\beta}dV(w) \\
\nonumber & \gtrsim K_{\Om}(z,z)^{-\alpha}d_{\Om}(z)^{\beta}.
\end{align*}
From this and Lemma \ref{lem-BT}(a), the assertion follows.

(b) Since $0 \leq 2\alpha + \beta < 2$, by Lemma A in Appendix, 
$$
\int_{\Om} |K_{\Om}(w,z)|^2 K_{\Om}(w,w)^{-\alpha} d_{\Om}(w)^{\beta}dV(w) \lesssim K_{\Om}(z,z)^{1-\alpha} d_{\Om}(z)^{\beta}, 
$$
for every $z \in \Om$. Thus, for each $z \in \Om$, we obtain
\begin{align*}
\widetilde{T_{\alpha, \beta}}(z) & =  K_{\Om}(z,z)^{-1} \int_{\Om} |K_{\Om}(w,z)|^2 K_{\Om}(w,w)^{-\alpha} d_{\Om}(w)^{\beta}dV(w) \lesssim  K_{\Om}(z,z)^{-\alpha} d_{\Om}(z)^{\beta}.
\end{align*}
From this and Lemma \ref{lem-BT}(b), the assertion follows.
\end{proof}

\section{Toeplitz operators between weighted Bergman spaces}\label{sec-Ber}

In this section we consider the Toeplitz operator $T_{\psi}$ acting from a weighted Bergman space $A_a^p(\Om)$ to another one $A_a^q(\Om)$ with $1 < p \leq q < \infty$. 
We recall that the weighted Bergman space $A^p_{a}(\Om) := L^p_{a}(\Om) \cap H(\Om)$, where $H(\Om)$ is the space of all holomorphic functions on $\Om$ endowed with the usual compact open topology $co$. Using the plurisubharmonicity of $|f(z)|^p$ with $f \in A_a^p(\Om)$, we can see that the topology induced by $\|\cdot\|_{p,a}$ is stronger than $co$ in $A_a^p(\Om)$.

\subsection{Boundedness and compactness} Since $T_{\psi}$ acts from $L^p_a(\Om)$ into $H(\Om)$, Theorem \ref{thm:main1} and Proposition \ref{prop-com1} also hold for the operator $T_{\psi}: A_a^p(\Om) \to A_a^q(\Om)$, and hence, so does Theorem \ref{thm:main1:com}, i.e. the upper estimate for $\|T_{\psi}\|_{e, L^p_a(\Om) \to L^q_a(\Om)}$ obtained in Theorem \ref{thm:main1:com} is valid for $\|T_{\psi}\|_{e, A^p_a(\Om) \to A^q_a(\Om)}$.

Moreover, note that in the proof of Theorem \ref{thm:main2} we used the inequality 
$$
\no{T_{\alpha, \beta}(K_{\Om}(\cdot,z))}_{q, a} \leq C \no{K_{\Om}(\cdot,z)}_{p, a}
$$
for holomorphic functions $K_{\Om}(\cdot,z)$. Thus, in Theorem \ref{thm:main2} we can replace weighted $L^p$-spaces $L^p_{a}(\Om)$ and $L^q_{a}(\Om)$ by the corresponding weighted Bergman spaces $A^p_{a}(\Om)$ and $A^q_{a}(\Om)$, respectively, to get necessary conditions for boundedness of $T_{\alpha, \beta}: A^p_{a}(\Om) \to A^q_{a}(\Om)$. 

On the other hand, since, in general, Lemma \ref{lem-wc-k} may be false for the space $A_a^p(\Om)$, we cannot get the lower estimate for $\|T_{\alpha,\beta}\|_{e, A_a^p(\Om) \to A_a^q(\Om)}$ as in Theorem \ref{thm:main2:com}. However, we can obtain the following necessary condition for compactness of $T_{\alpha, \beta}: A^p_{a}(\Om) \to A^q_{a}(\Om)$. 

\begin{theorem}
Let $1 < p \leq q < \infty$, $-1 < a < \min\{2(p - 1), q-1\}$, and $\Om$ be a bounded smooth pseudoconvex domain in $\C^n$ such that the Bergman kernel is of sharp $\B$-type and $\B$-polydisc condition is satisfied. 
If the operator $T_{\alpha, \beta}: A^p_{a}(\Om) \to A^q_{a}(\Om)$ with $\alpha, \beta \geq 0$ is compact, then 
$$
\limsup_{w \to \partial \Om} M_{\Om, p, q, a}^{\alpha, \beta}(w) = 0.
$$
\end{theorem}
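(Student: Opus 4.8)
The plan is to reuse the normalized reproducing kernels $g_z := k_{\Om,a}(\cdot,z) = K_{\Om}(\cdot,z)/\|K_{\Om}(\cdot,z)\|_{p,a}$, which lie in $A^p_a(\Om)$ with $\|g_z\|_{p,a}=1$, and to combine the boundary lower bound \eqref{eq-lo-tk} with a compactness-driven upper bound. First I would note that compactness forces boundedness, so by the $A^p_a$-analogue of Theorem \ref{thm:main2} (valid as explained above) the growth condition $M^{\alpha,\beta}_{\Om,p,q,a}\in L^{\infty}(\Om)$ holds; this will be needed for an integrability check below. Next, exactly as in \eqref{eq-lo-tk} --- whose derivation only uses \eqref{eq-T}, \eqref{eq-le-K} and Lemma \ref{lem-nor-ue}, all of which apply verbatim to the holomorphic functions $K_{\Om}(\cdot,z)$ --- I would obtain
\begin{equation*}
\|T_{\alpha,\beta} g_z\|_{q,a} \gtrsim M^{\alpha,\beta}_{\Om,p,q,a}(z) \qquad \text{for } z \text{ near } \partial\Om.
\end{equation*}
Thus it suffices to prove that $\|T_{\alpha,\beta} g_z\|_{q,a}\to 0$ as $z\to\partial\Om$.

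Since the narrative of this section avoids asserting weak convergence of $g_z$ in $A^p_a$ (Lemma \ref{lem-wc-k}), I would instead extract the conclusion from the compactness of $T_{\alpha,\beta}$ together with compact-open decay of the images. Concretely, the family $\{g_z\}$ is bounded in $A^p_a$, so $\{T_{\alpha,\beta}g_z\}$ is relatively compact in $A^q_a$; hence for any sequence $z_n\to\partial\Om$ some subsequence satisfies $T_{\alpha,\beta}g_{z_{n_k}}\to h$ in $\|\cdot\|_{q,a}$, and since this norm topology is stronger than $co$, also $T_{\alpha,\beta}g_{z_{n_k}}(\zeta)\to h(\zeta)$ for every $\zeta\in\Om$. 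The key step is then to show that $T_{\alpha,\beta}g_z(\zeta)\to 0$ pointwise, which forces $h=0$; as every sequence then has a subsequence with vanishing norm limit, $\|T_{\alpha,\beta}g_z\|_{q,a}\to 0$, and combining with the lower bound yields $\limsup_{w\to\partial\Om}M^{\alpha,\beta}_{\Om,p,q,a}(w)=0$.

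For the pointwise decay I would write $T_{\alpha,\beta}g_z(\zeta)=\langle g_z,\Psi_\zeta\rangle_a$ with $\Psi_\zeta(w)=\overline{K_{\Om}(\zeta,w)}K_{\Om}(w,w)^{-\alpha}d_{\Om}(w)^{\beta-a}$, and verify that $\Psi_\zeta\in L^{p'}_a(\Om)$: away from $\zeta$ the factor $K_{\Om}(\zeta,w)$ is bounded, so using the growth condition $M^{\alpha,\beta}_{\Om,p,q,a}\in L^{\infty}$ (to trade $K_{\Om}(w,w)^{-\alpha}d_{\Om}^{\beta}$ for $K_{\Om}(w,w)^{1/q-1/p}d_{\Om}^{a(1/p-1/q)}$), the inequality \eqref{eq-ine-dk}, and the hypothesis $a<q-1$, the boundary exponent comes out strictly above $-1$. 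With $\Psi_\zeta\in L^{p'}_a$ in hand, the splitting argument from the proof of Lemma \ref{lem-wc-k} applies to this single test function: the part over a compact exhausting set $Q_m$ tends to $0$ because $\|K_{\Om}(\cdot,z)\|_{p,a}\to\infty$ by Lemma \ref{lem-nor-le}, while the tail over $\Om\setminus Q_m$ is controlled by $\|g_z\|_{p,a}\|\Psi_\zeta\|_{L^{p'}_a(\Om\setminus Q_m)}\leq\|\Psi_\zeta\|_{L^{p'}_a(\Om\setminus Q_m)}$, which is small for large $m$; letting $z\to\partial\Om$ first and then $m\to\infty$ gives $T_{\alpha,\beta}g_z(\zeta)\to 0$.

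The main obstacle I anticipate is precisely this integrability check for $\Psi_\zeta$, since for large $a$ and small $\alpha,\beta$ it is not automatic from $\alpha,\beta\ge0$ alone; it is exactly here that the full strength of the restriction $a<\min\{2(p-1),q-1\}$ and of the necessary boundedness condition enters. Everything else is a transcription of the $L^p_a$ arguments, the one genuine change being the replacement of the abstract weak-null property by the concrete compact-open decay of the images $T_{\alpha,\beta}g_z$, which is available here because we may exploit the explicit integral form of the operator rather than working with an arbitrary compact perturbation.
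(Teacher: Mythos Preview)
Your proposal is correct and follows essentially the same route as the paper: both reduce to showing $\|T_{\alpha,\beta}k_{\Om,a}(\cdot,z)\|_{q,a}\to 0$, extract a norm-convergent subsequence from compactness, and then identify the limit as zero by proving decay of $T_{\alpha,\beta}k_{\Om,a}(\cdot,z)$ in the compact-open/pointwise sense via the same chain of estimates (boundedness of $K_\Om(\zeta,\cdot)$ for interior $\zeta$, the necessary boundedness condition to trade $K_\Om(w,w)^{-\alpha}d_\Om(w)^{\beta}$, the inequality \eqref{eq-ine-dk}, and the exponent check $p'(2/p-(a+2)/q)>-1$). The only cosmetic difference is that you package the pointwise decay as $\langle g_z,\Psi_\zeta\rangle_a\to 0$ for a single $L^{p'}_a$-test function, whereas the paper runs the splitting directly on the integral $\sup_{z\in Q}|T_{\alpha,\beta}h_m(z)|$; the ingredients and thresholds are identical.
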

\begin{proof}
By \eqref{eq-lo-tk}, $\|T_{\alpha, \beta}k_{\Om,a}(\cdot,z)\|_{q,a} \gtrsim  M_{\Om, p, q, a}^{\alpha, \beta}(z) $ for every $z \in \Om$ near the boundary $\partial \Om$. Thus, it is enough to prove that $T_{\alpha, \beta}k_{\Om,a}(\cdot, z) \to 0$ in $A_a^q(\Om)$ as $z \to \partial \Om$. 

First, we show that
$k_{\Om,a}(\cdot,z)$ converges to $0$ in $H(\Om)$ as $z \to \partial \Om$. Indeed, for each compact subset $Q$ of $\Om$, using the continuity up to the off-diagonal boundary of the Bergman kernel and Lemma \ref{lem-nor-le}, we get
\begin{align*}
\sup_{w \in Q} |k_{\Om,a}(w,z)| & \lesssim K_{\Om}(z,z)^{\frac{1}{p} - 1} d_{\Om}(z)^{-\frac{a}{p}} \sup_{w \in Q} |K_{\Om}(w,z)| \\
& \lesssim d_{\Om}(z)^{1 - \frac{a + 1}{p}} \sup_{w \in Q, z \in \overline{\Om}} |K_{\Om}(w,z)|  \to 0 \text{ as } z \to \partial \Om,
\end{align*}
 since, by \eqref{eq-ine-dk}, $d_{\Om}^{-2}(z) \leq K_{\Om}(z,z)$ and $a < 2(p - 1)$.

Next, by contradiction, we assume that there is a sequence $(z_m)_m$ in $\Om$ such that $z_m \to \partial \Om$ and $\|T_{\alpha, \beta}k_{\Om,a}(\cdot,z_m)\|_{q,a} \geq \delta$ for every $m \in \mathbb N$ and some number $\delta > 0$.
However, since $\|k_{\Om,a}(\cdot,z_m)\|_{p,a} = 1, m \in \mathbb N$, and $T_{\alpha, \beta}: A^p_{a}(\Om) \to A^q_{a}(\Om)$ is compact, we can suppose that the sequence $T_{\alpha, \beta}k_{\Om,a}(\cdot,z_m)$ converges to some function $g$ in $A_a^q(\Om)$, and hence, in $H(\Om)$. 
Now we claim that $T_{\alpha, \beta}k_{\Om,a}(\cdot,z_m) \to 0$ in $H(\Om)$. Then $g$ must be the zero function which is a contradiction.

To prove the claim, we fix an exhaustion by compact subsets $(Q_j)_j$ of $\Om$ and an arbitrary compact subset $Q$ of $\Om$. For simplicity, we put $h_m(w): = k_{\Om,a}(w,z_m)$, $m \in \mathbb N$. Then $\|h_m\|_{p, a} = 1$ and $h_m \to 0$ in $H(\Om)$ as $m \to \infty$. 
Since 
$
K_{\Om}\in C\left(\left(\overline{\Omega}\times\overline{\Omega}\right)\setminus \left( \partial \Om \times \partial\Omega\right) \right),
$ 
there exists a positive constant $C$ such that 
$\left|K_{\Om}\left(z,w\right)\right|\le C$ for all $(z,w)\in Q \times \overline{\Om}$.
Moreover, by Theorem \ref{thm:main2}, 
\begin{equation*}
K_{\Om}(w,w)^{-\alpha}d_{\Om}(w)^{\beta} \lesssim K_{\Om}(w,w)^{\frac{1}{q} - \frac{1}{p}}d_{\Om}(w)^{a\left(\frac{1}{p} - \frac{1}{q}\right)} \text{ for every } w \in \Om.
\end{equation*}
Using these inequalities, H\"older's inequality, and \eqref{eq-ine-dk}, we get that for every $m, j \in \mathbb N$,
\begin{align*}
& \sup_{z \in Q} \left|T_{\alpha, \beta}h_m(z) \right| 
=  \sup_{z \in Q} \left|\int_{\Om} K_{\Om}(z,w) K_{\Om}(w,w)^{-\alpha}d_{\Om}(w)^{\beta}h_{m}(w)dV(w)\right| \\
\lesssim &\; C \int_{\Om} |h_{m}(w)| K_{\Om}(w,w)^{\frac{1}{q}-\frac{1}{p}}d_{\Om}(w)^{a\left(\frac{1}{p} - \frac{1}{q}\right)} dV(w) \\
= &\; C \left( \int_{Q_j} +  \int_{\Om\setminus Q_j}\right) |h_{m}(w)| K_{\Om}(w,w)^{\frac{1}{q}-\frac{1}{p}}d_{\Om}(w)^{a\left(\frac{1}{p} - \frac{1}{q}\right)} dV(w) \\
\leq &\; C \sup_{w \in Q_j} |h_{m}(w)| \sup_{w \in Q_j} K_{\Om}(w,w)^{\frac{1}{q}-\frac{1}{p}}d_{\Om}(w)^{a\left(\frac{1}{p} - \frac{1}{q}\right)}  \\
+ & \; C \left( \int_{\Om\setminus Q_j}|h_{m}(w)|^p d_{\Om}(w)^{a}dV(w) \right)^{\frac{1}{p}} \times  \left( \int_{\Om\setminus Q_j} K_{\Om}(w,w)^{p'\left(\frac{1}{q}-\frac{1}{p}\right)}d_{\Om}(w)^{-\frac{ap'}{q}}dV(w)\right)^{\frac{1}{p'}} \\
\leq &\; C \sup_{w \in Q_j} |h_{m}(w)| \sup_{w \in Q_j} K_{\Om}(w,w)^{\frac{1}{q}-\frac{1}{p}}d_{\Om}(w)^{a\left(\frac{1}{p} - \frac{1}{q}\right)}  
+  \; C  \left( \int_{\Om\setminus Q_j} d_{\Om}(w)^{p'\left(\frac{2}{p}-\frac{a+2}{q} \right)}dV(w)\right)^{\frac{1}{p'}}.
\end{align*}
In the last inequality letting first $m \to \infty$, and then $j \to \infty$, we obtain
$$
\lim_{m \to \infty}\sup_{z \in Q} \left|T_{\alpha, \beta}h_m(z) \right| \lesssim \limsup_{j \to \infty} \left( \int_{\Om\setminus Q_j} d_{\Om}(w)^{p'\left(\frac{2}{p}-\frac{a+2}{q} \right)}dV(w)\right)^{\frac{1}{p'}} = 0,
$$
since $p'\left(\frac{2}{p}-\frac{a+2}{q}\right) > -1$ by hypothesis on $p, q, a$, and hence, $d_{\Om}(w)^{p'\left(\frac{2}{p}-\frac{a+2}{q} \right)} \in L^1(\Om)$.
\end{proof}

\begin{remark}
If, in addition, the continuous dual of $A^p_a(\Om)$ is $A_a^{p'}(\Om)$, then Lemma \ref{lem-wc-k} holds for this space $A^p_a(\Om)$, and hence, so does the lower estimate in Theorem \ref{thm:main2:com} for $\|T_{\alpha, \beta}\|_{e, A^p_a(\Om) \to A^q_a(\Om)}$.
\end{remark}

\subsection{Schatten class Toeplitz operators}
Since the arguments in Section \ref{sec-sch} are based on the Berezin transform $\widetilde{T}(z) = \langle Tk_{\Om}(\cdot,z), k_{\Om}(\cdot,z)\rangle$ with $k_{\Om}(\cdot,z) \in A^2(\Om)$. Then we can repeat these arguments for the operator $T_{\psi}$ on $A^2(\Om)$ to show that Lemmas \ref{lem-BT} and \ref{lem-sc-sch}, and Theorem \ref{thm:main:sch} also hold for $T_{\psi}$ on $A^2(\Om)$.

To end this section, we summarize all results for the Toeplitz operator $T_{\alpha, \beta}: A^p_{a}(\Om) \to A^q_{a}(\Om)$.
\begin{theorem}\label{thm:main:B}
Let $1 < p \leq q < \infty$, $-1 < a < \min\{2(p-1), \frac{q}{p'} \}$, and $\Om$ be a bounded smooth pseudoconvex domain in $\C^n$ such that the Bergman kernel is of sharp $\B$-type and $\B$-polydisc condition is satisfied. For every $\alpha, \beta \geq 0$, the following statements hold:
\begin{itemize}
\item[(1)] The operator $T_{\alpha, \beta}: A_a^p(\Om) \to A_a^q(\Om)$ is continuous if and only if $M^{\alpha, \beta}_{\Om, p, q, a}(w) \in L^{\infty}(\Om)$.
In this case, 
$$
\|T_{\alpha, \beta}\|_{A_a^p(\Om) \to A_a^q(\Om)} \leq C \left( \dfrac{p'+q}{(1+a)\left(1 - \frac{ap'}{q} \right)} \right)^{\frac{1}{p'} + \frac{1}{q}} \|M^{\alpha, \beta}_{\Om, p, q, a}\|_{\infty},
$$
where $C$ is independent of $p, q, a$.
\item[(2)] The operator $T_{\alpha, \beta}: A_a^p(\Om) \to A_a^q(\Om)$ is compact if and only if 
\begin{equation*}
\limsup_{w \to \partial \Om} M^{\alpha, \beta}_{\Om, p, q, a}(w) = 0.
\end{equation*}
\item[(3)] Suppose that the operator $T_{\alpha, \beta}$ is compact on $A^2(\Om)$. For every $s \geq 1$, the operator $T_{\alpha, \beta}$ belongs to Schatten class $S_s$ if and only if $K_{\Om}(w,w)^{1 - s\alpha}d_{\Om}(w)^{s\beta} \in L^1(\Om)$. In the case $s \in (0,1)$, if $2\alpha + \beta < 2$ and $K_{\Om}(w,w)^{1 - s\alpha}d_{\Om}(w)^{s\beta} \in L^1(\Om)$, then $T_{\alpha, \beta}$ is in $S_s$.
\end{itemize}
\end{theorem}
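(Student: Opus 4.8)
The plan is to assemble the three parts directly from the results of Sections \ref{sec-bd}--\ref{sec-sch}, exploiting the single structural fact that $T_\psi$ always maps into $H(\Om)$: since $K_\Om(z,w)$ is holomorphic in $z$, the function $T_\psi f(z)=\int_\Om K_\Om(z,w)\psi(w)f(w)\,dV(w)$ is holomorphic on $\Om$ for every $f\in L^p_a(\Om)$. Hence $T_\psi$ carries $A^p_a(\Om)\subset L^p_a(\Om)$ into $H(\Om)\cap L^q_a(\Om)=A^q_a(\Om)$, and on the subspace $A^p_a(\Om)$ the operator norm can only decrease. This is what transfers the boundedness, compactness, and Schatten estimates to the Bergman-space setting, up to the one genuine obstruction noted below.

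For part (1), sufficiency and the displayed norm bound are inherited directly from Theorem \ref{thm:main1} by restriction to $A^p_a(\Om)$. For the necessity I would invoke Theorem \ref{thm:main2}: its proof tests boundedness only against the reproducing kernels $K_\Om(\cdot,z)$, which are holomorphic and hence lie in $A^p_a(\Om)$. Thus the chain of estimates \eqref{eq-T}--\eqref{eq-le-K} applies verbatim with $L^p_a,L^q_a$ replaced by $A^p_a,A^q_a$, yielding $M^{\alpha,\beta}_{\Om,p,q,a}\in L^\infty(\Om)$. (The hypothesis $a<\min\{2(p-1),\tfrac{q}{p'}\}$ is enough here, since $\tfrac{q}{p'}\le q-1$ when $p\le q$.)

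For part (2), the sufficiency of $\limsup_{w\to\partial\Om}M^{\alpha,\beta}_{\Om,p,q,a}(w)=0$ follows from the essential-norm upper bound of Theorem \ref{thm:main1:com}: Proposition \ref{prop-com1} shows that each truncated operator $T_{\psi_{Q_m}}$ is compact from $A^p_a(\Om)$ to $A^q_a(\Om)$ (again because its range is holomorphic), so the same exhaustion argument forces $\|T_{\alpha,\beta}\|_{e,A^p_a(\Om)\to A^q_a(\Om)}=0$. The necessity is precisely the contradiction argument carried out in the (unnumbered) theorem immediately preceding this one, which sidesteps Lemma \ref{lem-wc-k} and instead combines the convergence of $k_{\Om,a}(\cdot,z)$ to $0$ in $H(\Om)$ with the lower bound \eqref{eq-lo-tk}.

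For part (3), since the Berezin-transform machinery of Section \ref{sec-sch} rests only on the reproducing kernels $k_\Om(\cdot,z)\in A^2(\Om)$, I would re-run Lemmas \ref{lem-BT} and \ref{lem-sc-sch} and Theorem \ref{thm:main:sch} verbatim on $A^2(\Om)$, producing the stated $S_s$ characterization for $s\ge 1$ and the sufficient condition for $0<s<1$. The main obstacle throughout is that, unlike in the $L^p_a$ case, the continuous dual of $A^p_a(\Om)$ need not coincide with $A^{p'}_a(\Om)$, so Lemma \ref{lem-wc-k} (weak-null convergence of the normalized kernels) may fail. This is exactly why part (2) is stated as a compactness criterion rather than as the full two-sided essential-norm estimate \eqref{eq-ess-nor}, and why the necessity half of part (2) must be established by the separate $H(\Om)$-convergence argument instead of by the cleaner duality argument available in the $L^p_a$ setting.
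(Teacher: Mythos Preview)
Your proposal is correct and follows essentially the same approach as the paper: restriction of the $L^p_a$-results to the holomorphic subspace for sufficiency, the kernel-based necessity arguments of Theorems \ref{thm:main2} and the unnumbered compactness theorem (which test only against the holomorphic functions $K_\Om(\cdot,z)$), and a verbatim re-run of the Berezin-transform machinery of Section \ref{sec-sch} on $A^2(\Om)$. You also correctly identify the one genuine obstruction---the possible failure of $(A^p_a)^*=A^{p'}_a$---and its consequence that only the compactness criterion, not the two-sided essential-norm estimate, carries over.
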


\section*{Appendix}
In this section we prove a generalization of \cite[Proposition~2.4]{KLT18} and another upper estimate for quantity \eqref{eq-sK} without using the continuity of the operator $T_{\alpha, \beta}: L_a^p(\Om) \to L_a^q(\Om)$. These results may have some independent interest.

\medskip

\textbf{Proposition A.} \textit{Let $\Om$ be a domain in $\C^n$ such that the Bergman kernel $K_{\Om}$ is of sharp $\mathcal{B}$-type. Then, for each $z_0\in \partial\Om$, there is a neighbourhood $U$ of $z_0$ such that  for any $s \geq 0, a - s \ge 1$ and $-1 < b + 2s <2a-2$, 
	\begin{equation*}
	\begin{split}
	I_{a,b,s}\left(z\right) &:= \intop_{\Omega\cap U}\left|K_{\Om}\left(z,w\right)\right|^{a} d_{\Om}\left(w\right)^{b}K_{\Om}(w,w)^{-s}dV(w) \\
		&\leq C\frac{2a-1}{(2a-2-b - 2s)(b+ 2s +1)}K_{\Om}(z,z)^{a-s-1}d_{\Om}\left(z\right)^{b}
	\end{split}
	\end{equation*}
	for every $z\in\Omega\cap U$ and some constant $C$ dependent only on $U$ and $\Om$.}

\begin{proof} Similarly to \cite[Proposition~2.4]{KLT18}, we choose  a small neighbourhood $U$ of $z_0$ such that $\Phi_z(U)\subset \BB(z',c)$ for any $z\in U$,  where  the ball $\BB(z',c)$ and the biholomorphism $\Phi_z$ are given in Definition~\ref{df-bsharp}. Using the invariant formula as in the proof of Theorem \ref{thm:main2}, we get
$$
I_{a,b,s}(z)\le C\int_{\Om'\cap \BB(z',c)}|K_{\Om'}(z',w')|^{a}d_{\Om'}(w')^{b}K_{\Om'}(w',w')^{-s}dV(w').
$$
Thus, it suffices to show that 
	\begin{equation}\label{eqn:Iab prime}
	\begin{split}
I'_{a,b,s}\left(z'\right) &:=\intop_{\Omega'\cap \BB(z',c)}\left|K_{\Om'}\left(z',w'\right)\right|^{a}d_{\Om'}\left(w'\right)^{b}K_{\Om'}(w',w')^{-s}dV(w') \\
		& \leq C\frac{2a-1}{(2a-2-b-2s)(b+2s+1)}K_{\Om'}(z',z')^{a-s-1}d_{\Om'}\left(z'\right)^{b},
	\end{split}
	\end{equation} 
for every $z' \in \Om' \cap U'$. 

It is clear that $b_j(z',w')\le b_j(z',z')$  and $b_1(z',w')\le \dfrac{1}{d_{\Om}(z')+d_{\Om}(w')}$. Since $K_{\Om'}$ is of sharp $\B$-type at $z'$, for all $w'\in \Om'\cap \BB(z',c)$, we have
	\begin{eqnarray} 	\label{eqn:Kzw Kzz}
	|K_{\Om'}(z',w')|\le C K_{\Om'}(z',z'){\left(\frac{d_{\Om'}(z')}{d_{\Om'}(z')+d_{\Om'}(w')}\right)^{2}},
	\end{eqnarray}
	and
	\begin{equation}\label{eqn:Kww Kzw}
           K_{\Om}(w',w')^{-1} \leq C |K_{\Om}(z',w')|^{-1}{\left(\frac{d_{\Om'}(w')}{d_{\Om'}(w')+d_{\Om'}(z')}\right)^{2}}.
	\end{equation}
Since $s \geq 0$ and $a - s\geq1$, from \eqref{eqn:Kzw Kzz} and \eqref{eqn:Kww Kzw} it follows that
\begin{align*}
I'_{a,b,s}(z') & \leq \intop_{\Omega'\cap \BB(z',c)}\left|K_{\Om'}\left(z',w'\right)\right|^{a-s} \dfrac{d_{\Om'}\left(w'\right)^{b+2s}}{(d_{\Om'}(w')+d_{\Om'}(z'))^{2s}}dV(w') \\
& \leq K_{\Om'}(z',z')^{a-s-1}d_{\Om'}(z')^{2(a-s-1)} \intop_{\Omega'\cap \BB(z',c')}\left|K_{\Om'}\left(z',w'\right)\right| \dfrac{d_{\Om'}\left(w'\right)^{b+2s}}{(d_{\Om'}(w')+d_{\Om'}(z'))^{2a - 2}}dV(w').
\end{align*} 
Moreover, using the estimates in the proof of \cite[Proposition 2.4]{KLT18} with $- 1 < b + 2s < 2a - 2$, we obtain
	\begin{align*}
		 J_{a,b,s}(z')&=\int_{\Om'}|K_{\Om'}(z',w')|\frac{d_{\Om'}(w')^{b+2s}}{(d_{\Om'}(z')+d_{\Om'}(w'))^{2a-2}}dV(w') \\
		& \leq C \dfrac{2a-1}{(2a-2-b-2s)(b+2s+1)}d_{\Om'}(z')^{b+2s - 2a +2}.
	\end{align*}

Consequently, from these estimates \eqref{eqn:Iab prime} follows.
\end{proof}

Similarly to Lemma \ref{lem-nor-ue}, from Proposition A we can get the following upper estimate for quantity \eqref{eq-sK}.

\medskip

\textbf{Lemma A.} \textit{Let $\Om$ be a domain in $\C^n$ such that  the Bergman kernel $K_{\Om}$ is of sharp $\mathcal{B}$-type. For every $\alpha, \beta \geq 0$ with $2\alpha + \beta < 2$, the following inequality 
	\begin{equation*}
\int_{\Om} |K_{\Om}(z,w)|^2 K_{\Om}(w,w)^{-\alpha}d_{\Om}(w)^{\beta}dV(w) \lesssim K_{\Om}(z,z)^{1 - \alpha}d_{\Om}(z)^{\beta}
	\end{equation*}
	holds for every $z\in\Omega$.}

\begin{proof}
As in the proof of Lemma \ref{lem-nor-ue}, we choose a covering $\{U_j\}_{j=0}^N$ to $\overline\Om$ so that $U_{0}\Subset \Omega$,  $\partial\Omega\subset\bigcup_{j=1}^{N}U_{j}$, and the integral estimates in Proposition A hold on $U_j$ with some constant $C_j$ for all $j=1,\dots, N$.

Since $K_\Om\in C((\overline\Om\times\overline\Om) \setminus (\partial \Om \times \partial \Om) )$, there is a constant $C > 0$ such that
$$
\left|K_{\Om}\left(w,z\right)\right|\le C
\quad \T{
for all }\quad (w, z)\in \left(\bigcup_{j=1}^N \left( \left( \overline{\Om} \cap \overline{U_j} \right) \times (\overline{\Om}\setminus U_j)\right)\bigcup \left(\overline{U_0} \times \overline{\Om}\right)\right).
$$
Using this and Proposition A for $U_j$ and $(a, b, s) = (2, \beta, \alpha)$ with $2\alpha + \beta < 2$, we get that  for every $z \in \Om$,
$$
\int_{\Om \cap U_j} |K_{\Om}(w,z)|^2 K_{\Om}(w,w)^{-\alpha} d_{\Om}(w)^{\beta} dV(w) \lesssim
C, \text{ if } z \in \Om \setminus U_j
$$
and
$$
\int_{\Om \cap U_j} |K_{\Om}(w,z)|^2 K_{\Om}(w,w)^{-\alpha} d_{\Om}(w)^{\beta} dV(w) \lesssim
C_j K_{\Om}(z,z)^{1 - \alpha} d_{\Om}(z)^{\beta}, \text{ if } z \in \Om \cap U_j.
$$
Hence, for every $z \in \Om$,
\begin{align*}
 \int_{\Om} |K_{\Om}(w,z)|^2 K_{\Om}(w,w)^{-\alpha} d_{\Om}(w)^{\beta} dV(w) \leq &  \int_{U_0} |K_{\Om}(w,z)|^2  K_{\Om}(w,w)^{-\alpha} d_{\Om}(w)^{\beta} dV(w) \\
  + & \sum_{j = 1}^N \int_{\Om \cap U_j} |K_{\Om}(w,z)|^2  K_{\Om}(w,w)^{-\alpha} d_{\Om}(w)^{\beta} dV(w) \\
 \lesssim & \; C^2 + \sum_{j = 1}^N \max \left\{C^2,  C_j K_{\Om}(z,z)^{1-\alpha}d_{\Om}(z)^{\beta} \right\}. 
\end{align*}
Moreover, since $2\alpha + \beta < 2$ and $d_{\Om}(z)^{-2} \leq K_{\Om}(z,z), z \in \Om$,
$$
K_{\Om}(z,z)^{1-\alpha}d_{\Om}(z)^{\beta} \geq d_{\Om}(z)^{2\alpha + \beta - 2} \to \infty \text{ as } z \to \partial \Om.
$$
From this and the above inequality the desired estimate follows.
\end{proof}

\bigskip

\textbf{Acknowledgement.} This paper has been carried out during the authors' stay at the Vietnam Institute for Advanced Study in Mathematics. They would like to thank the institution for hospitality and support

 \vspace{10pt}
 

\vspace{30pt}

\end{document}